\numberwithin{equation}{section}
\newcommand{\N}{\mathbb{N}}
\newcommand{\R}{\mathbb{R}}
\newcommand{\sfd}{{\sf d}}
\renewcommand{\d}{{\mathrm d}}
\newcommand{\X}{{\rm X}}
\newcommand{\LIP}{{\rm LIP}}
\newcommand{\lip}{{\rm lip}}
\newcommand{\limi}{\varliminf}
\newcommand{\lims}{\varlimsup}
\newcommand{\fr}{\penalty-20\null\hfill\(\blacksquare\)}
\newtheorem{theorem}{Theorem}[section]
\newtheorem{corollary}[theorem]{Corollary}
\newtheorem{lemma}[theorem]{Lemma}
\newtheorem{proposition}[theorem]{Proposition}
\newtheorem{definition}[theorem]{Definition}
\newtheorem{remark}[theorem]{Remark}
\title{A note on 
BV and 1-Sobolev functions on the weighted Euclidean space}
\author{Maria Stella Gelli}
\address{Dipartimento di Matematica\\
         Universit\`{a} di Pisa\\
         Largo Bruno Pontecorvo, 5\\
         56127 Pisa\\
         Italy}
\email{maria.stella.gelli@unipi.it}
\author{Danka Lu\v ci\' c}
\address{Dipartimento di Matematica\\
         Universit\`{a} di Pisa\\
         Largo Bruno Pontecorvo, 5\\
         56127 Pisa\\
         Italy}
\email{danka.lucic@dm.unipi.it}
\begin{document}
\date{\today} 
\keywords{Functions of bounded variations, weighted Euclidean space, 
tangent space with respect to a measure}
\subjclass[2020]{46E36, 49J45}
\begin{abstract}
In the setting of the Euclidean space equipped with an arbitrary Radon measure, 
we prove the equivalence between several notions of function of bounded variation
present in the literature. We also study the relation between various definitions of 
\(1\)-Sobolev function.
\end{abstract}
\maketitle
\section{Introduction}
In the setting of the Euclidean space \(\R^d\) equipped with an arbitrary Radon
measure \(\mu\geq 0\) (hereafter referred to as the \emph{weighted Euclidean space}) 
the first notion of function of bounded variation 
(BV, for short) was introduced in the late nineties, 
proposed by Bellettini, Bouchitt\' e, and Fragal\`{a} in \cite{BBF}. 
The approach in there follows the ideas developed in \cite{BBS},
where the Sobolev space 
\(W^{1,p}\) with \(p>1\) has been introduced. 
It is based on a notion of \emph{space tangent to the measure} \(\mu\) and the 
related concept of \(\mu\)-\emph{tangential gradient}, that we shall discuss below.
The study of functional spaces in the weighted Euclidean space setting
is motivated by numerous applications in 
different kinds of variational problems; e.g.\ shape optimization 
\cite{BouchBut,BolBouch, BL},
optimal transport problems with gradient penalization \cite{Louet},
homogenization \cite{Zhikov, Mandallena}.

In the last twenty years, both Sobolev and BV calculus have been extensively studied 
also in a more general setting, that of \emph{metric measure spaces} (namely, 
complete and separable metric spaces endowed with a boundedly-finite Borel measure),
see for example \cite{MIRANDA03, ADM, DMPhD, Sobolev_Pekka}. 
The first instance of the definition of BV function appeared in \cite{MIRANDA03} 
by Miranda, where a relaxation-type approach has been adopted.
Ten years later  it was followed by the definitions by 
Ambrosio and Di Marino in \cite{ADM} and by Di Marino in \cite{DM},
where a thorough 
study of all the approaches has been performed and where
it was also proven that all of them are equivalent. 
All the results from \cite{ADM} and \cite{DM} are collected in Di Marino's 
PhD thesis \cite{DMPhD}, 
to which we will often refer to. 
\smallskip

In the main result of this paper (given in Theorem \ref{thm:equiv_BV}) we will
prove that the notion of BV function proposed in \cite{BBF}, that is tailored 
for the Euclidean setting, coincides with several (equivalent) notions of BV function
coming from the framework of metric measure spaces \cite{DMPhD}. This note
comes as a natural
follow-up to the paper \cite{LPR}, where the equivalence between 
different notions of Sobolev spaces \(W^{1,p}\) with \(p>1\) has been proven.
\smallskip

Let us now briefly explain the main ideas that lie behind the definition of 
BV and \(W^{1,1}\) functions proposed in \cite{BBF}.
The objects that play a key role in this approach are 
bounded vector fields having bounded distributional divergence.
In the sequel, the space of such vector fields will be denoted by 
\({\rm D_\infty}({\rm div}_\mu)\).
Their role is, in a sense, two-fold: the space \({\rm BV}(\R^d, \mu)\) of 
functions of bounded variation is defined 
as the space of those \(1\)-integrable (with respect to \(\mu\)) functions 
\(f\in L^1_\mu(\R^d)\) 
such that the quantity
\[
\|D_\mu f\|\coloneqq \sup\left\{\int_{\R^d}f\,{\rm div}_\mu(v)\,\d \mu:\, 
v\in {\rm D}_{\infty}({\rm div}_\mu), |v|\leq 1\, \mu\text{-a.e.}\right\},
\]
referred to as the \emph{total variation} of \(f\), is finite. On the other hand, 
one can show that there exists a unique (up to \(\mu\)-a.e.\ equality) minimal 
subbundle of \(\R^d\) -- denoted \(\{T_\mu(x)\}_{x\in \R^d}\) -- such that for every 
\(v\in {\rm D}_{\infty}({\rm div}_\mu)\) it holds
\(v(x)\in T_\mu(x)\) for \(\mu\)-a.e.\ \(x\in \R^d\).
It then permits to give the notion of \emph{tangential gradient} \(\nabla_\mu f\)
of a compactly-supported smooth function,
by simply setting 
\[\nabla_\mu f(x)\coloneqq {\rm pr}_{T_\mu(x)}(\nabla f(x)),\quad 
f\in C^{\infty}_c(\R^d).\]
It has been proven in \cite{BBF} that the space \({\rm BV}(\R^d, \mu)\) can be 
equivalently characterized as the domain of finiteness 
of the relaxation (in the strong \(L^1_\mu(\R^d)\)-topology) of the functional
associating to every \(f\in C^{\infty}_c(\R^d)\) the quantity 
\(\int_{\R^d}|\nabla_\mu f|\,\d\mu\)
and set to be \(+\infty\) elsewhere in \(L^1_\mu(\R^d)\).
Moreover,
it holds that 
\begin{equation}\label{eq:intro_tot_var_tangential}
\|D_\mu f\|=\inf\,\limi_{n\to \infty}\int_{\R^d}|\nabla_\mu f_n|\,
\d\mu,\quad f\in {\rm BV}(\R^d,\mu),
\end{equation}
where the infimum is taken among all \((f_n)_n\subseteq C^{\infty}_c(\R^d)\)
converging strongly in \(L^1_\mu(\R^d)\) to \(f\). 

The Sobolev space \(W^{1,1}(\R^d, \mu)\)
is defined (as in the case \(p>1\) in \cite{BBS}) as the completion of 
\(C^\infty_c(\R^d)\) with respect to the norm
\[
\|f\|_{W^{1,1}(\R^d, \mu)}= \|f\|_{L^1_\mu(\R^d)}+\|\nabla_\mu f\|_{L^1_\mu(\R^d;\R^d)}.
\]
Such defined space is indeed a space of functions (not just an abstract Banach space), 
due to the closability property of the tangential gradient operator, which therefore
extends to the whole space \(W^{1,1}(\R^d, \mu)\).
These properties of \(\nabla_\mu\), that have been stated in \cite{BBF}, are 
proven in  Lemma \ref{lem:grad_mu_closed}, Corollary \ref{cor:W11_space_fcts}, and
Proposition \ref{prop:ext_tg_grad} for the sake of completeness.
\smallskip

Looking from the point of view of the metric measure space theory, there are 
two approaches that will be relevant for the purposes of 
the present paper. 
\smallskip

{\color{blue}(i)} The first one is a variant of the relaxation-type approach from
\cite{MIRANDA03}, given in \cite{DMPhD}. This approach involves,
due to the lack (in general) of a smooth structure of the underlying space,
locally Lipschitz functions. The
role of \(|\nabla_\mu f|\) in the relaxation argument above is here played by 
the asymptotic Lipschitz constant (denoted hereafter by \(\lip_a (f)\) for any 
\(f\) Lipschitz; see \eqref{eq:lip_a} for its definition).
Hence, when we stick to the 
specific case of the weighted Euclidean space, 
the space of BV functions \({\rm BV}_{\rm Lip}(\R^d, \mu)\) is defined 
as the set of those \(f\in L^1_\mu(\R^d)\) for which the quantity
\[
\|D_\mu f\|_{\rm Lip}\coloneqq \inf\,\limi_{n\to \infty}
 \int_{\R^d}{\rm\lip}_a(f_n)\,\d\mu 
\]
is finite.  The infimum above is taken among all sequences \((f_n)_{n}\) 
of locally Lipschitz functions 
converging to \(f\) strongly in \(L^1_\mu(\R^d)\). Localizing the above procedure, 
one can associate to each \(f\in {\rm BV}_{\rm Lip}(\R^d, \mu)\)
its \emph{total variation measure} \(|D_\mu f|_{\rm Lip}\),
which on any open set \(\Omega\subseteq \R^d\) reads as 
\[
|D_\mu f|_{\rm Lip}(\Omega)\coloneqq \inf\,\limi_{n\to \infty}
 \int_{\Omega}{\rm\lip}_a(f_n)\,\d\mu.
\]
The corresponding definition of the Sobolev space \(W^{1,1}_{\rm Lip}(\R^d, \mu)\)
is as follows: \(f\in L^1_\mu(\R^d)\) belongs to \(W^{1,1}_{\rm Lip}(\R^d, \mu)\)
if there exists a sequence \((f_n)_n\) of compactly-supported Lipschitz 
functions converging to \(f\) strongly in \(L^1_\mu(\R^d)\) and such that 
\(({\rm lip}_a(f_n))_n\) is weakly convergent in \(L^1_\mu(\R^d)\).

\smallskip

{\color{blue}(ii)} Another definition of BV function we will 
consider was proposed in \cite{DM}
and is based on the notion of \emph{bounded derivation} \(\mathbf b\)
admitting  \emph{bounded divergence} \({\rm div}(\mathbf b)\).
Such a derivation can be thought of as a linear map
acting on boundedly-supported 
Lipschitz functions and having values in the space of 
essentially bounded functions. 
Also, this derivation enjoys a suitable Leibniz rule and a locality property.
We refer to 
Subsection \ref{ssec:BV_der} for the definition, in the specific case of
the weighted Euclidean space, of the above-described space of
derivations, that we denote by \({\rm Der}_b(\R^d, \mu)\).
With this notion at disposal, the space of BV functions 
\({\rm BV}_{\rm Der}(\R^d,\mu)\)
is defined as the space of those \(f\in L^1_\mu(\R^d)\) for which there 
exists a continuous, linear 
(also with respect to the multiplication by Lipschitz functions) operator 
\(
L_f\colon {\rm Der}_b(\R^d, \mu)\to \mathscr M(\R^d),
\)
such that
\[
L_f({\bf b})(\R^d)=-\int_{\R^d}f\,{\rm div}(\mathbf b)\,\d\mu\quad
\text{ for all }\mathbf b\in {\rm Der}_b(\R^d, \mu).
\]
Here we denote by \(\mathscr M(\R^d)\) the space of all finite
signed Borel measures on \(\R^d\). The total variation associated to
a BV function \(f\in {\rm BV}_{\rm Der}(\R^d, \mu)\) is given by the quantity 
\[
\|D_\mu f\|_{\rm Der}\coloneqq \sup\left\{\int_{\R^d}f\,{\rm div}(\mathbf b)\,\d \mu:\, 
\mathbf b\in {\rm Der}_{b}(\R^d, \mu), |\mathbf b|\leq 1\, \mu\text{-a.e.}\right\}.
\]
Similarly, the total variation measure associated with 
\(f\) is given on any \(\Omega\subseteq \R^d\) open by
\[
|D_\mu f|_{\rm Der}(\Omega)
\coloneqq \sup\left\{\int_{\Omega}f\,{\rm div}(\mathbf b)\,\d \mu:\, 
\mathbf b\in {\rm Der}_{b}(\R^d, \mu), |\mathbf b|\leq 1\, 
\mu\text{-a.e.\ and }{\rm supp}(\mathbf b)\Subset \Omega\right\}.
\]
It follows from \cite{DMPhD} that 
\begin{equation}\label{eq:intro_DM_eqBV}
{\rm BV}_{\rm Lip}(\R^d, \mu)={\rm BV}_{\rm Der}(\R^d,\mu),\quad 
|D_\mu f|_{\rm Lip}=|D_\mu f|_{\rm Der}\quad 
\text{ for every }f\in {\rm BV}_{\rm Lip}(\R^d, \mu).
\end{equation}

\medskip

The present paper provides the following results:
\smallskip

{\color{blue} (I)} \({\rm BV}_{\rm Lip}(\R^d, \mu)={\rm BV}_{C^\infty}
(\R^d, \mu)\): As one might expect, we show that in the setting of the 
weighted Euclidean space, 
smooth functions are enough for the approximation in the relaxation process that 
leads to the definition of the space \({\rm BV}_{\rm Lip}(\R^d, \mu)\) described in 
{\color{blue}(i)}.  
The same holds also for the total variation measure, 
namely \(|D_\mu f|_{\rm Lip}=|D_\mu f|_{C^\infty}\) (see Theorem
\ref{thm:Lip_Cinfty}).
Recalling that for every \(f\in C^{\infty}(\R^d)\) it holds that 
\({\rm lip}_a(f)=|\nabla f|\), note that the quantity \(\|D_\mu f\|_{\rm Lip}\),
a priori,
might differ from the quantity 
\(\|D_\mu f\|\) given in \eqref{eq:intro_tot_var_tangential}.

\medskip

{\color{blue} (II)} \({\rm BV}(\R^d, \mu)={\rm BV}_{\rm Der}(\R^d, \mu)\):
In order to prove it, we first show in Section \ref{ssec:Der_VF}
that there exists an isometric isomorphism between
\({\rm D}_{\infty}({\rm div}_\mu)\)  and \({\rm Der}_b(\R^d, \mu)\). Due to this fact, 
we have that \(\|D_\mu f\|=\|D_\mu f\|_{\rm Der}\) for every \(f\in L^1_\mu(\R^d)\). 
This immediately implies \({\rm BV}(\R^d, \mu)\supseteq{\rm BV}_{\rm Der}(\R^d, \mu)\).
To get the opposite inclusion, we use the equivalent characterization of 
\(\|D_\mu f\|\) given in \eqref{eq:intro_tot_var_tangential} in order to 
construct the operator 
\(L_f\) as in point {\color{blue}(ii)} above, 
associated with \(f\in {\rm BV}(\R^d, \mu)\) (see Theorem \ref{thm:equiv_BV}).
Taking 
into account {\color{blue} (I)} and \eqref{eq:intro_DM_eqBV} we finally get 
\[
{\rm BV}(\R^d, \mu)={\rm BV}_{\rm Der}(\R^d, \mu)={\rm BV}_{\rm Lip}(\R^d, \mu)=
{\rm BV}_{C^\infty}(\R^d, \mu),
\]
and, moreover, that
\(|D_\mu f|_{\rm Der}=|D_\mu f|_{\rm Lip}=|D_\mu f|_{C^\infty}\) as measures, 
for every \(f\in {\rm BV}(\R^d, \mu)\).

\medskip

{\color{blue} (III)} \(W^{1,1}_{\rm Lip}(\R^d, \mu)\subseteq W^{1,1}(\R^d, \mu)\):
In the case of \(W^{1,1}\) spaces, there are not many instances where the various 
notions provided in \cite{DMPhD} do coincide. Also in this case we obtain (in 
Theorem \ref{thm:relation_W11})
only the above inclusion: to do so, we need to perform a careful study of 
the tangential gradient operator and its behaviour on 
compactly-supported Lipschitz functions. 
The entire Subsection \ref{ssec:W1,1} is devoted to this.
In particular, 
it allows us to give an equivalent characterization  of the space \(W^{1,1}(\R^d, \mu)\)
(see Theorem \ref{thm:equiv_char_W11}), which turns out to be more suitable 
for showing that \(W^{1,1}_{\rm Lip}(\R^d, \mu)\subseteq W^{1,1}(\R^d, \mu)\).
\medskip

\noindent{\bf Acknowledgements.}
The first named author acknowledges the support by the project PRIN 2017 ``Variational methods for stationary and evolution problems with singularities and interfaces'',
directed by Gianni Dal Maso (SISSA, Trieste, Italy).
The second named author acknowledges the support by the project 2017TEXA3H ``Gradient 
flows, Optimal Transport and Metric Measure Structures'', funded by 
the Italian Ministry of Research and University. 
We also wish to thank 
Simone Di Marino for the useful discussions on the topic.
\section{Preliminaries}
In this paper we are going to work in the setting of the weighted Euclidean space, namely,
in the space \(\R^d\) equipped with the Euclidean distance
\({\sf d}_{\rm Eucl}(x,y)\coloneqq|x-y|\) and an arbitrary 
non-negative Radon measure \(\mu\). The space \((\R^d,\sfd_{\rm Eucl},\mu)\) will be fixed to the end of the paper. Let us recall some basic notions that will be used throughout.
\smallskip

We denote by \(\LIP(\R^d)\) the space of all real-valued Lipschitz functions 
on \(\R^d\), whereas \(\LIP_c(\R^d)\) stands for the 
family of all elements of \(\LIP(\R^d)\) having  compact support. 
The Lipschitz constant of the restriction of a function
\(f\in \LIP(\R^d)\) to a set \(E\subseteq\R^d\) will be denoted by
\({\rm Lip}(f;E)\in[0,+\infty)\), while the global Lipschitz constant
of \(f\) will be denoted by \({\rm Lip}(f)\coloneqq{\rm Lip}(f;\R^d)\)
for brevity.

Given any \(p\in[1,\infty)\),
we denote by \(L^p_\mu(\R^d;\R^k)\) the space of \(p\)-integrable 
(with respect to \(\mu\)) \(\R^k\)-valued maps on \(\R^d\), while 
\(L^{\infty}_\mu(\R^d;\R^k)\) stands for the space of \(\mu\)-essentially
bounded \(\R^k\)-valued maps on \(\R^d\), in both cases considered up to
\(\mu\)-a.e.\ equality.
By \(L^0_\mu(\R^d)\) we shall denote the space of all \(\mu\)-measurable functions
on \(\R^d\), again considered up to \(\mu\)-a.e.\ equality.
It is well-known that the space \(L^p_\mu(\R^d;\R^k)\) is a
Banach space for any \(p\in[1,\infty]\), with respect to the norm
\[
\|v\|_{L^p_\mu(\R^d;\R^k)}\coloneqq\||v|\|_{L^p_\mu(\R^d)},
\quad \text{ for every } v\in L^p_\mu(\R^d;\R^k).
\]
\smallskip

A mollification argument will be often useful. Hence, we fix once and
for all a kernel of mollification \(\rho\) on \(\R^d\), \emph{i.e.},
a smooth, symmetric function \(\rho\in C^\infty_c(\R^d)\) such that
\(\rho\geq 0\), \({\rm supp}(\rho)\subseteq B_1(0)\), and
\(\int_{\mathbb R^d}\rho(x)\,\d\mathcal L^n(x)=1\). Given any
\(\varepsilon>0\), we define \(\rho_\varepsilon\in C^\infty_c(\R^d)\) as
\[
\rho_\varepsilon(x)\coloneqq\varepsilon^n\rho(x/\varepsilon),
\quad\text{ for every }x\in\R^d.
\]
Notice that \({\rm supp}(\rho_\varepsilon)\subseteq B_\varepsilon(0)\)
and \(\int_{\R^d}\rho_\varepsilon(x)\,\d\mathcal L^n(x)=1\).
Given a locally integrable, Borel function \(f\colon\R^d\to\R\),
we define its \(\varepsilon\)-mollification (with kernel \(\rho\)) as
the convolution between \(\rho_\varepsilon\) and \(f\), \emph{i.e.},
\[
(\rho_\varepsilon*f)(x)\coloneqq\int_{\R^d}\rho_\varepsilon(x-y)f(y)\,
\d\mathcal L^n(y)=\int_{\R^d}\rho_\varepsilon(y)f(x+y)\,\d\mathcal L^n(y),
\quad\text{ for every }x\in\R^d.
\]
In the following result we collect the main well-known properties of the
mollification:
\begin{lemma}[Approximation of compactly-supported Lipschitz functions]
\label{lemma:approx_lip_via_smooth}
Let \(f\in \LIP_{c}(\R^d)\) be given. Then for every \(\varepsilon>0\)
the \(\varepsilon\)-mollification \(f_\varepsilon\coloneqq
\rho_\varepsilon*f\in C^{\infty}_c(\R^d)\) satisfies
\begin{subequations}
\begin{align}
\label{eq:approx_lip_via_smooth_0}
{\rm supp}(f_\varepsilon)\subseteq B_\varepsilon({\rm supp}(f)),&\\
\label{eq:approx_lip_via_smooth_1}
\big|f_\varepsilon(x)-f(x)\big|\leq{\rm Lip}(f)\varepsilon,&
\quad\text{ for every }x\in \R^d,\\
\label{eq:approx_lip_via_smooth_2}
|\nabla f_\varepsilon(x)|\leq {\rm Lip}\big(f;B_{2\varepsilon}(x)\big),
&\quad \text{ for every }x\in \R^d.\end{align}
\end{subequations}
Moreover, it holds \(f=\lim_{\varepsilon\searrow 0}f_\varepsilon\)
strongly in \(L^p_\mu(\R^d)\) for any \(p\in[1,\infty)\) and
weakly\(^*\) in \(L^\infty_\mu(\R^d)\).
\end{lemma}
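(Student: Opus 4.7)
All four conclusions are classical consequences of mollification; my plan is to write \(f_\varepsilon\) in the symmetric form
\[
f_\varepsilon(x)=(\rho_\varepsilon*f)(x)=\int_{\R^d}\rho_\varepsilon(y)\,f(x-y)\,\d\mathcal L^n(y),
\]
and to exploit three ingredients throughout: \({\rm supp}(\rho_\varepsilon)\subseteq B_\varepsilon(0)\), \(\int\rho_\varepsilon\,\d\mathcal L^n=1\), and the Lipschitz bound \(|f(u)-f(v)|\leq {\rm Lip}(f)\,|u-v|\).

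The support inclusion \eqref{eq:approx_lip_via_smooth_0} would follow immediately: if \(\sfd_{\rm Eucl}(x,{\rm supp}(f))>\varepsilon\), then \(x-y\notin{\rm supp}(f)\) for every \(y\in B_\varepsilon(0)\), so the integrand vanishes identically and \(f_\varepsilon(x)=0\); this confines the set where \(f_\varepsilon\) is nonzero to the \(\varepsilon\)-neighbourhood of \({\rm supp}(f)\). For the pointwise estimate \eqref{eq:approx_lip_via_smooth_1}, using \(\int\rho_\varepsilon\,\d\mathcal L^n=1\) I would rewrite
\[
f_\varepsilon(x)-f(x)=\int_{B_\varepsilon(0)}\rho_\varepsilon(y)\big(f(x-y)-f(x)\big)\,\d\mathcal L^n(y),
\]
and dominate the integrand by \(\rho_\varepsilon(y)\,{\rm Lip}(f)\,|y|\leq\rho_\varepsilon(y)\,{\rm Lip}(f)\,\varepsilon\).

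The more delicate item is the gradient bound \eqref{eq:approx_lip_via_smooth_2}, since \(f\) is only Lipschitz and one cannot differentiate under the integral against \(\nabla f\) in the literal pointwise sense. I would instead use difference quotients: for a unit vector \(v\in\R^d\) and \(t\in(0,\varepsilon)\),
\[
\frac{f_\varepsilon(x+tv)-f_\varepsilon(x)}{t}=\int_{B_\varepsilon(0)}\rho_\varepsilon(y)\,\frac{f(x+tv-y)-f(x-y)}{t}\,\d\mathcal L^n(y).
\]
For \(y\in B_\varepsilon(0)\) both points \(x+tv-y\) and \(x-y\) lie in \(B_{\varepsilon+t}(x)\subseteq B_{2\varepsilon}(x)\), so the integrand is bounded by \(\rho_\varepsilon(y)\,{\rm Lip}\bigl(f;B_{2\varepsilon}(x)\bigr)\). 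Sending \(t\to 0^+\) and using the smoothness of \(f_\varepsilon\) produces \(|\nabla f_\varepsilon(x)\cdot v|\leq {\rm Lip}\bigl(f;B_{2\varepsilon}(x)\bigr)\), and taking the supremum over unit \(v\) yields the claim. The factor \(2\) in \(B_{2\varepsilon}(x)\) is precisely what this argument naturally gives, and is the only mildly subtle point of the proof.

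For the convergence statement, \eqref{eq:approx_lip_via_smooth_0} forces every \(f_\varepsilon\) with \(\varepsilon\leq 1\) to be supported in the fixed compact set \(K\coloneqq\overline{B_1({\rm supp}(f))}\), and \eqref{eq:approx_lip_via_smooth_1} provides uniform convergence \(\|f_\varepsilon-f\|_{L^\infty(\R^d)}\leq{\rm Lip}(f)\,\varepsilon\). Since \(\mu\) is a Radon measure, \(\mu(K)<\infty\), hence
\[
\|f_\varepsilon-f\|_{L^p_\mu(\R^d)}\leq \mu(K)^{1/p}\,{\rm Lip}(f)\,\varepsilon\xrightarrow[\varepsilon\searrow 0]{}0
\]
for every \(p\in[1,\infty)\). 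Weak\(^*\) convergence in \(L^\infty_\mu(\R^d)\) would then follow at once from the uniform bound \(\|f_\varepsilon\|_{L^\infty}\leq\|f\|_{L^\infty}\) combined with the elementary estimate \(\bigl|\int g\,(f_\varepsilon-f)\,\d\mu\bigr|\leq \|g\|_{L^1_\mu}\,\|f_\varepsilon-f\|_{L^\infty}\to 0\) for every \(g\in L^1_\mu(\R^d)\).
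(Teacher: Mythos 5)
Your proposal is correct, and items \eqref{eq:approx_lip_via_smooth_0}--\eqref{eq:approx_lip_via_smooth_2} follow essentially the paper's own route: the support inclusion and the uniform estimate are argued identically, and your directional difference quotient for the gradient bound is the same mechanism as the paper's, which bounds \(|f_\varepsilon(y)-f_\varepsilon(x)|\) for a nearby point \(y\in B_\varepsilon(x)\), divides by \(|y-x|\), and lets \(y\to x\); in both cases the factor \(2\) arises because the two arguments of \(f\) under the integral sit in \(B_{\varepsilon+t}(x)\subseteq B_{2\varepsilon}(x)\). Where you genuinely diverge is the final convergence claim. The paper proceeds qualitatively, via two applications of the dominated convergence theorem: once with the dominating function \(\chi_K\|f\|_{L^\infty_\mu(\R^d)}\in L^1_\mu(\R^d)\cap L^\infty_\mu(\R^d)\) to get \(L^p_\mu\)-convergence, and once more against an arbitrary \(h\in L^1_\mu(\R^d)\) to get weak\(^*\) convergence. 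You instead exploit the quantitative uniform bound \(\|f_\varepsilon-f\|_{L^\infty(\R^d)}\leq{\rm Lip}(f)\,\varepsilon\) together with the common compact support \(K\) and the Radon property \(\mu(K)<+\infty\), which yields the explicit rate \(\mu(K)^{1/p}\,{\rm Lip}(f)\,\varepsilon\) in \(L^p_\mu(\R^d)\) and, via \(\bigl|\int g\,(f_\varepsilon-f)\,\d\mu\bigr|\leq\|g\|_{L^1_\mu(\R^d)}\|f_\varepsilon-f\|_{L^\infty_\mu(\R^d)}\), in fact \emph{norm} convergence in \(L^\infty_\mu(\R^d)\), which is strictly stronger than the weak\(^*\) convergence asserted (your auxiliary bound \(\|f_\varepsilon\|_{L^\infty}\leq\|f\|_{L^\infty}\) is then superfluous). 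Your route is shorter and gives more for this specific lemma, precisely because \(f\in\LIP_c(\R^d)\) makes the convergence uniform; the paper's dominated-convergence argument is the one that would survive if the pointwise bound \eqref{eq:approx_lip_via_smooth_1} were weakened to mere pointwise convergence with a uniform domination, so each approach has its natural scope.
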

\begin{proof}
The fact that \(f_\varepsilon\in C^\infty(\R^d)\) is well-known.
Given any \(x\in\R^d\setminus B_\varepsilon({\rm supp}(f))\),
we have that \(x+y\notin{\rm supp}(f)\) for every \(y\in B_\varepsilon(0)\),
thus \(f_\varepsilon(x)=\int_{B_\varepsilon(0)}
\rho_\varepsilon(y)f(x+y)\,\d\mathcal L^n(y)=0\),
getting \eqref{eq:approx_lip_via_smooth_0} and in particular
that \(f_\varepsilon\in C^\infty_c(\R^d)\).
Now observe that for any \(x\in\R^d\) we may estimate
\[
\big|f_\varepsilon(x)-f(x)\big|\leq
\int_{\R^d}\big|f(x+y)-f(x)\big|\rho_\varepsilon(y)\,\d\mathcal L^n(y)
\leq {\rm Lip}(f) \int_{B_\varepsilon(0)}|y|\rho_\varepsilon(y)\,
\d\mathcal L^n(y)\leq {\rm Lip}(f)\varepsilon,
\]
which proves \eqref{eq:approx_lip_via_smooth_1}.
To verify  \eqref{eq:approx_lip_via_smooth_2}, take
\(y\in B_\varepsilon(x)\) with $y\neq x$. Then it holds that 
\[
\big|f_\varepsilon(y)-f_\varepsilon(x)\big|\leq
\int_{B_\varepsilon(0)}\big|f(y+v)-f(x+v)\big|\rho_\varepsilon(v)\,
\d\mathcal L^n(v)\leq {\rm Lip}\big(f;B_{2\varepsilon}(x)\big)|y-x|.
\]
By dividing the above inequality by \(|y-x|\) and passing to 
the limit as \(y\to x\), we get that 
\(|\nabla f_\varepsilon(x)|\leq {\rm Lip}\big(f;B_{2\varepsilon}(x)\big)\), 
proving \eqref{eq:approx_lip_via_smooth_2}. Finally,
for any \(\varepsilon\in(0,1)\) and \(x\in\R^d\) we have that
\[
|f_\varepsilon(x)|\leq
\int_{\R^d}|f(x+y)|\rho_\varepsilon(y)\,\d\mathcal L^n(y)
\leq\sup_{\R^d}|f|\int_{\R^d}\rho_\varepsilon\,\d\mathcal L^n
=\sup_{\R^d}|f|,
\]
which together with \eqref{eq:approx_lip_via_smooth_0} grant that
\(|f_\varepsilon|\leq\chi_K\|f\|_{L^\infty_\mu(\R^d)}\in
L^1_\mu(\R^d)\cap L^\infty_\mu(\R^d)\) for all \(\varepsilon\in(0,1)\),
where \(K\) stands for the closed \(1\)-neighbourhood of \({\rm supp}(f)\).
We know from \eqref{eq:approx_lip_via_smooth_1} that \(f_\varepsilon\)
pointwise converges to \(f\) as \(\varepsilon\searrow 0\),
so applying the dominated convergence theorem we get
\(f=\lim_{\varepsilon\searrow 0}f_\varepsilon\) strongly
in \(L^p_\mu(\R^d)\) for any \(p\in[1,\infty)\). For any
\(h\in L^1_\mu(\R^d)\) we have that \(|hf_\varepsilon|
\leq\chi_K|h|\|f\|_{L^\infty_\mu(\R^d)}\) holds \(\mu\)-a.e.\ for
every \(\varepsilon\in(0,1)\) and \((hf)(x)=
\lim_{\varepsilon\searrow 0}(hf_\varepsilon)(x)\) for
\(\mu\)-a.e.\ \(x\in\R^d\), thus by applying again the dominated
convergence theorem we conclude that \(\int_{\R^d}hf\,\d\mu
=\lim_{\varepsilon\searrow 0}\int_{\R^d}hf_\varepsilon\,\d\mu\).
Thanks to the arbitrariness of \(h\in L^1_\mu(\R^d)\), we conclude
that \(f=\lim_{\varepsilon\searrow 0}f_\varepsilon\)
weakly\(^*\) in \(L^\infty_\mu(\R^d)\).
\end{proof}

For the reader's usefulness, in the following statement we collect
some well-known fundamental results in functional analysis, which
will be used several times in the sequel.
\begin{proposition}\label{prop:FA_facts}
The following properties are verified:
\begin{itemize}
\item[\(\rm i)\)] Let \((v_n)_n\subseteq L^1_\mu(\R^d;\R^k)\) and
\(v\in L^1_\mu(\R^d;\R^k)\) be such that \(v_n\to v\) strongly in
\(L^1_\mu(\R^d;\R^k)\). Then some subsequence \((v_{n_i})_i\)
of \((v_n)_n\) is \emph{dominated}, i.e.\ there exists
\(g\in L^1_\mu(\R^d)\) such that \(|v_{n_i}|\leq g\) holds
\(\mu\)-a.e.\ for every \(i\in\N\). Moreover, we can further require that
\(v_{n_i}(x)\to v(x)\) as \(i\to\infty\) for \(\mu\)-a.e.\ \(x\in\R^d\).
\item[\(\rm ii)\)] Let \((v_n)_n\subseteq L^1_\mu(\R^d;\R^k)\) and
\(v\in L^1_\mu(\R^d;\R^k)\) be such that \(v_n\rightharpoonup v\)
weakly in \(L^1_\mu(\R^d;\R^k)\). Then for any \(n\in\N\) there exist
coefficients \((\alpha^n_i)_{i=n}^{N_n}\subseteq[0,1]\), for some
\(N_n\in\N\) with \(N_n\geq n\), such that
\(\sum_{i=n}^{N_n}\alpha^n_i=1\) and
\(\sum_{i=n}^{N_n}\alpha^n_i v_i\to v\) strongly in
\(L^1_\mu(\R^d;\R^k)\) as \(n\to\infty\).
\item[\(\rm iii)\)] Let \((v_n)_n\subseteq L^1_\mu(\R^d;\R^k)\) be a
dominated sequence. Then there exist \(v\in L^1_\mu(\R^d;\R^k)\)
and a subsequence \((v_{n_i})_i\) of \((v_n)_n\) such that \(v_{n_i}\rightharpoonup v\)
weakly in \(L^1_\mu(\R^d;\R^k)\) as \(i\to\infty\).
\end{itemize}
\end{proposition}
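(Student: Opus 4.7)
All three items are standard functional-analytic facts, so my plan is to reduce each to a classical tool from the $L^1$ theory rather than to argue from scratch.

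For (i), the plan is to exploit that strong $L^1_\mu$-convergence allows extraction of a subsequence with geometrically summable errors. Concretely, I would pick $(v_{n_i})_i$ with $\|v_{n_i}-v\|_{L^1_\mu(\R^d;\R^k)}\le 2^{-i}$, set
\[
g := |v| + \sum_{i=1}^{\infty} |v_{n_i}-v|,
\]
and observe via monotone convergence that $\|g\|_{L^1_\mu(\R^d)}\le \|v\|_{L^1_\mu(\R^d;\R^k)}+1$, hence $g\in L^1_\mu(\R^d)$. The domination $|v_{n_i}|\le g$ $\mu$-a.e.\ is then immediate from the triangle inequality, and the finiteness of $g$ at $\mu$-a.e.\ point forces the tail of $\sum_i|v_{n_i}-v|$ to vanish pointwise, yielding $v_{n_i}(x)\to v(x)$ for $\mu$-a.e.\ $x\in\R^d$.

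For (ii), the plan is to invoke Mazur's lemma. Since $v$ is the weak limit of $(v_n)_n$, it belongs to the weak closure of the convex hull of any tail $\{v_k:k\ge n\}$, which coincides with the strong closure by Hahn--Banach applied to the reflexive-like dual pairing underlying weak convergence in $L^1_\mu$. Hence for every $n$ there is a finite convex combination $\sum_{i=n}^{N_n}\alpha_i^n v_i$ within $L^1_\mu$-distance $1/n$ of $v$, which is exactly the sequence requested.

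For (iii), the plan is to invoke the Dunford--Pettis theorem. A family dominated by $g\in L^1_\mu(\R^d)$ is automatically equi-integrable, since $\int_E|v_n|\,\d\mu\le\int_E g\,\d\mu$ is uniformly small whenever $\mu(E)$ is small, and tight, since $\int_{\R^d\setminus K}g\,\d\mu\to 0$ along an exhausting sequence of compacts $K$. These two properties give sequential relative weak compactness in $L^1_\mu(\R^d;\R^k)$, and extracting any weakly convergent subsequence provides the required $v$. None of the three parts seems to pose a real obstacle: the only mild care, in (iii), is checking equi-integrability and tightness from the domination hypothesis, both of which reduce to the absolute continuity of $\int g\,\d\mu$.
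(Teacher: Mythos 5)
Your proposal is correct and follows essentially the same route as the paper: item i) via a subsequence with geometrically summable errors dominated through monotone convergence, item ii) via Mazur's lemma (the weak and norm closures of a convex set coincide), and item iii) via Dunford--Pettis, with equi-integrability and tightness read off from the dominating function. The only cosmetic difference is in i), where you sum the deviations \(|v_{n_i}-v|\) from the known limit while the paper sums the consecutive differences \(|f_{n_i}-f_{n_{i+1}}|\) (so that it must additionally identify the pointwise limit with \(f\)); the two variants are interchangeable here, and your version is marginally more direct.
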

\begin{proof}
Observe that, writing \(v=(v^1,\ldots,v^k)\in L^1_\mu(\R^d;\R^k)\),
we may estimate
\[
\max_{i=1,\ldots,k}|v^i|\leq|v|=
\big(|v^1|^2+\ldots+|v^k|^2\big)^{1/2}\leq \sqrt{k}\max_{i=1,\ldots,k}|v^i|,
\quad\text{ in the }\mu\text{-a.e.\ sense.}
\]
In particular, a sequence \((v_n)_n\subseteq L^1_\mu(\R^d;\R^k)\)
is dominated if and only if \((v_n^i)_n\subseteq L^1_\mu(\R^d)\)
is dominated for every \(i=1,\ldots,k\). Moreover, it is easy
to check that \(v_n\) converges strongly (resp.\ weakly) in
\(L^1_\mu(\R^d;\R^k)\) to some vector field
\(v=(v^1,\ldots,v^k)\in L^1_\mu(\R^d;\R^k)\) if and only if
\(v_n^i\) converges strongly (resp.\ weakly) in \(L^1_\mu(\R^d)\)
to \(v^i\) for every \(i=1,\ldots,k\). Thanks to these observations,
we can prove the statement by arguing componentwise, i.e.\ it
suffices to deal with the case \(k=1\).\\
{\color{blue}i)} Fix any \((f_n)_n\subseteq L^1_\mu(\R^d)\) and
\(f\in L^1_\mu(\R^d)\) such that \(f_n\to f\) strongly in
\(L^1_\mu(\R^d)\). Then we can find a subsequence \((n_i)_i\)
satisfying \(\|f_{n_i}-f_{n_{i+1}}\|_{L^1_\mu(\R^d)}\leq 1/2^i\)
for every \(i\in\N\). Now let us define
\[
g(x)\coloneqq|f_{n_1}|(x)+\sum_{i=1}^\infty|f_{n_i}-f_{n_{i+1}}|(x),
\quad\text{ for }\mu\text{-a.e.\ }x\in\R^d.
\]
The \(\mu\)-a.e.\ defined functions \(g_j\coloneqq|f_{n_1}|+
\sum_{i=1}^j|f_{n_i}-f_{n_{i+1}}|\) satisfy \(g_j\nearrow g\)
in the \(\mu\)-a.e.\ sense and
\[
\int g_j\,\d\mu\leq\int|f_{n_1}|\,\d\mu+
\sum_{i=1}^j\int|f_{n_i}-f_{n_{i+1}}|\,\d\mu\leq
\|f_{n_1}\|_{L^1_\mu(\R^d)}+\sum_{i=1}^j\frac{1}{2^i}
\leq\|f_{n_1}\|_{L^1_\mu(\R^d)}+1.
\]
By using the monotone convergence theorem we get that
\(\int g\,\d\mu=\lim_j\int g_j\,\d\mu\leq\|f_{n_1}\|_{L^1_\mu(\R^d)}+1\),
thus in particular \(g\in L^1_\mu(\R^d)\). Notice also that for any
\(j\in\N\) with \(j\geq 2\) we have that
\[
|f_{n_j}|=\bigg|f_{n_1}+\sum_{i=2}^j(f_{n_i}-f_{n_{i-1}})\bigg|
\leq|f_{n_1}|+\sum_{i=2}^j|f_{n_i}-f_{n_{i-1}}|\leq g,\quad
\text{ in the }\mu\text{-a.e.\ sense.}
\]
All in all, we have proved that \((f_{n_i})_i\) is dominated by \(g\).
Finally, from the fact that \(g(x)<+\infty\) for \(\mu\)-a.e.\ \(x\in\R^d\)
we deduce that \(\big(f_{n_i}(x)\big)_i\subseteq\R\) is a Cauchy sequence
for \(\mu\)-a.e.\ \(x\in\R^d\). Given that \(f_{n_i}\to f\) strongly in
\(L^1_\mu(\R^d)\), we thus conclude that \(f_{n_i}(x)\to f(x)\) for
\(\mu\)-a.e.\ \(x\in\R^d\).\\
{\color{blue}ii)} Immediate consequence of Mazur Lemma, applied to the Banach space 
\(L^1_\mu(\R^d)\).\\
{\color{blue}iii)} It follows from Dunford--Pettis Theorem.
For a more direct proof, see \cite[Lemma 1.3.22]{GP_lectures}.
\end{proof}
\begin{remark}\label{rmk:Measures}
{\rm
With \(({\mathscr M}(\R^d),\|\cdot\|_{\sf TV})\) 
we denote the space of finite, signed Borel measures 
on \(\R^d\). Endowed with the total variation norm, denoted above by 
\(\|\cdot\|_{\sf TV}\), it results in a Banach space.
We recall that 
we can identify \(({\mathscr M}(\R^d),\|\cdot\|_{\sf TV})\) with the dual of the 
Banach space \(C_0(\R^d)\coloneqq {\rm cl}_{C_b(\R^d)}(C_c(\R^d))\). 
Here, \(C_b(\R^d)\) (resp.\ \(C_c(\R^d)\)) stands for the space
of bounded (resp.\ compactly-supported) continuous, real-valued functions on \(\R^d\).  
Recall also that \(C_b(\R^d)\) is a Banach space when endowed with the 
supremum norm \(\|f\|_{C_b(\R^d)}\coloneqq \sup_{x\in \R^d}|f(x)|\). 
\fr}
\end{remark}
\section{Different notions of BV space}
\subsection{BV space via vector fields}
The first attempt to the definition of BV functions in the setting of the
weighted Euclidean spaces has been done in \cite{BBF}. It is based on the 
notion of \(\mu\)-divergence of a vector field which we are going to
recall below.
\smallskip

A vector field \(v\in L^{\infty}_\mu(\R^d;\R^d)\)
is said to be a vector field with bounded \(\mu\)-divergence 
(in a distributional sense) if there
exists a function \({\rm div}_\mu(v)\in L^\infty_\mu(\R^d)\) such that
the following integration-by-parts formula holds:
\[
\int_{\R^d}\nabla f\cdot v\,{\rm d}\mu=-\int_{\R^d}f\,{\rm div}_\mu(v)\,{\rm d}\mu, \quad 
\text{ for every }f\in C^{\infty}_c(\R^d).
\]
Whenever it exists, \({\rm div}_\mu(v)\) is uniquely determined.
Let us define the space
\[
{\rm D}_\infty({\rm div}_\mu)\coloneqq
\big\{v\in L^\infty_\mu(\R^d;\R^d):\,v\text{ has bounded }\mu\text{-divergence}\big\}.
\]

The following definition of BV function has been proposed in \cite{BBF}:
\begin{definition}[BV space via vector fields]\label{def:BVEuclidean}
We say that a function 
\(f\in L^1_\mu(\R^d)\) 
belongs to the space \({\rm BV}(\R^d, \mu)\)
if the quantity 
\begin{equation}\label{eq:tot_var_eucl}
\|D_\mu f\|\coloneqq\sup\left\{\int_{\R^d}f\,{\rm div}_\mu(v)\,{\rm d}\mu:
\, v\in {\rm D}_{\infty}({\rm div}_\mu),\, |v|\leq 1\,\mu\text{-a.e.}\right\}
\end{equation}
is finite.
\end{definition}
The total variation defined in \eqref{eq:tot_var_eucl} can be localized
on open sets as follows: given any function \(f\in{\rm BV}(\R^d,\mu)\)
and any open set \(\Omega\subseteq\R^d\), we set
\[
|D_\mu f|(\Omega)\coloneqq\sup\left\{\int_\Omega f\,{\rm div}_\mu(v)
\,{\rm d}\mu:\, v\in {\rm D}_{\infty}({\rm div}_\mu),\, {\rm supp}(v)
\Subset\Omega,\, |v|\leq 1\,\mu\text{-a.e.}\right\}.
\]
Notice that \(|D_\mu f|(\R^d)=\|D_\mu f\|\) by definition.
The function \(|D_\mu f|\) can be extended to all Borel sets
\(B\subseteq\R^d\) via Carath\'{e}odory construction, as follows:
\[
|D_\mu f|(B)\coloneqq\inf\big\{|D_\mu f|(\Omega):\,\Omega
\subseteq\R^d\text{ open},\, B\subseteq\Omega\big\}.
\]
It turns out that \(|D_\mu f|\) is a finite Borel measure on
\(\R^d\). However, we do not verify it right now; we will obtain
it as a consequence of Theorem \ref{thm:equiv_BV}.
\subsection{BV space via relaxation}\label{ssec:BV_relaxed}
The relaxation-type approach to the definition of BV space
has been firstly introduced in \cite{MIRANDA03} in the setting of
metric measure spaces. We shall present here a slight variant of it, which has been proposed in \cite{DMPhD}. 
\smallskip

Given any open set \(\Omega\subseteq \R^d\), we denote by
\(\LIP_{loc}(\Omega)\) the family of all locally Lipschitz functions on \(\Omega\),
i.e.\ those functions \(f\colon \Omega\to \R\) satisfying the following: 
for every \(x\in \Omega\) there exists 
\(r_x>0\) such that \(f|_{B_{r_x}(x)}\) is Lipschitz. 
Given any \(f\in\LIP_{loc}(\Omega)\), we shall denote 
by \(\lip_a(f)\colon \Omega\to[0,+\infty)\)
its \emph{asymptotic Lipschitz constant},
which is defined as
\begin{equation}\label{eq:lip_a}
\lip_a(f)(x)\coloneqq\lim_{r\to 0}{\rm Lip}\big(f;B_r(x)\big)
=\lims_{\substack{y\neq z\\ y,z\to x}}\frac{|f(y)-f(z)|}{{\sf d}(y,z)},
\end{equation}
if \(x\in \Omega\) is an accumulation point and \({\rm lip}_a(f)(x)\coloneqq 0\) 
otherwise.
\smallskip

Taking into account \cite[Theorem 4.5.3]{DMPhD}, we have the following definition of
BV space:
\begin{definition}[BV space via relaxation]\label{def:BVmms}
We say that a function 
\(f\in L^1_\mu(\R^d)\) 
belongs to the space \({\rm BV}_{{\rm Lip}}(\R^d, \mu)\),
if one of the following equivalent 
conditions is satisfied:
\begin{itemize}
\item [1)] 
There exists
a sequence \((f_n)_n\subseteq \LIP_{loc}(\R^d)\cap L^1_\mu(\R^d)\) such that 
\[f_n\to f\;\text{ in } L^1_{\mu}(\R^d)\quad
\text{ and } \quad\sup_n\int_{\R^d}\lip_a(f_n)\,{\rm d}\mu<+\infty.\]
\item [2)] 
There exists
a sequence \((f_n)_n\subseteq \LIP_c(\R^d)\) such that 
\[f_n\to f\;\text{ in } L^1_\mu(\R^d)\quad
\text{ and } \quad\sup_n\int_{\R^d}\lip_a(f_n)\,{\rm d}\mu<+\infty.\]
\end{itemize}
\end{definition}

Given any \(f\in {\rm BV}_{\rm Lip}(\R^d,\mu)\), the
\emph{total variation measure} \(|D_\mu f|_{\rm Lip}\)
associated with \(f\) is defined in \cite{DMPhD} as
\[
|D_\mu f|_{\rm Lip}(B)\coloneqq\inf\big\{|D_\mu f|_{\rm Lip}(\Omega):
\,\Omega\subseteq\R^d\text{ open},\,B\subseteq\Omega\big\},\quad
\text{ for every }B\subseteq\R^d\text{ Borel,}
\]
where for any open set \(\Omega\subseteq \R^d\) we set
\begin{equation}\label{eq:tot_variation_local_mms}
|D_\mu f|_{\rm Lip}(\Omega)\coloneqq 
\inf\Big\{\limi_{n\to\infty}\int_\Omega\lip_a(f_n)\,{\rm d}\mu:
\;(f_n)_n\subseteq \LIP_{loc}(\Omega)\cap L^1_\mu(\Omega),\; 
f_n\to f\text{ in }L^1_\mu(\Omega)\Big\}.
\end{equation}
The \emph{total variation} of \(f\), \emph{i.e.}, the total variation measure 
evaluated at the entire space,
can be recovered by using only compactly-supported Lipschitz functions 
(cf.\ \cite[Theorem 4.5.3]{DMPhD}), 
namely:
\begin{equation}\label{eq:tot_variation_mass_mms}
|D_\mu f|_{\rm Lip}(\R^d)= \inf\Big\{\limi_{n\to\infty}
\int_{\R^d}\lip_a(f_n)\,{\rm d}\mu:\;(f_n)_n\subseteq \LIP_c(\R^d),
\; f_n\to f\text{ in }L^1_\mu(\R^d)\Big\}.
\end{equation}
As observed in the example preceding \cite[Proposition 4.4.1]{DMPhD}, 
the formulation in
\eqref{eq:tot_variation_mass_mms}, using compactly-supported
Lipschitz functions instead of locally Lipschitz ones,
cannot be used (in general) to compute the quantity
\(|D_\mu f|_{\rm Lip}(\Omega)\) for any open set
\(\Omega\subseteq\R^d\).
\begin{remark}{\rm
While Definition \ref{def:BVEuclidean} is tailored to the
weighted Euclidean space setting, the concept in Definition
\ref{def:BVmms} (as well as the one in Definition
\ref{def:BV_derivation}) actually makes sense on any metric measure
space \((\X,\sfd,\mathfrak m)\), \emph{i.e.}, \((\X,\sfd)\)
is a complete separable metric space and \(\mathfrak m\)
is a boundedly-finite Borel measure on \(\X\);
we refer to \cite{DMPhD} for the details. This remark
will play a role in Section \ref{ssec:total_var_meas}.
\fr}\end{remark}
\subsection{BV space via derivations}\label{ssec:BV_der}
In this subsection we report the definition of BV space via derivations 
proposed in \cite{DMPhD}. 
We start by recalling the definition of derivation introduced in \cite{DMPhD} 
and point out some of its basic properties.
\begin{definition}\label{def:derivation}
By a  \emph{derivation} on \((\R^d,{\sf d}_{\rm Eucl},\mu)\)
we mean any linear map \(\mathbf{b}\colon \LIP_c(\R^d)\to L^0_\mu(\R^d)\) 
satisfying the following two properties:
\begin{itemize}
\item [1)] {\color{blue}\sc Leibniz rule:} For every \(f,g\in \LIP_c(\R^d)\),
it holds that
\[
\mathbf{b}(fg)=\mathbf{b}(f)g+f\mathbf{b}(g).
\]
\item [2)] {\color{blue}\sc Weak locality:} There exists a non-negative function 
\(G\in L^0_\mu(\R^d)\)
such that 
\[|\mathbf{b}(f)|\leq G\,\lip_a(f)
\quad \text{ holds }\mu\text{-a.e.,\ }\text{ for every }f\in \LIP_c(\R^d).\]
\end{itemize}
The least function \(G\) as above will be denoted by \(|\mathbf b|\).
\end{definition}

We recall from \cite{GDMSP} that for a given derivation 
\(\mathbf{b}\), one has the following formula for 
\(|\mathbf{b}|\):
\begin{equation}\label{eq:formula_for_|b|}
|\mathbf{b}|=\text{\rm ess sup}\,\big\{\mathbf{b}(f):\,f\in \LIP_c(\R^d),\,
 {\rm Lip}(f)\leq 1\big\}.
\end{equation}
By the \emph{support} of a derivation \(\mathbf b\) we mean the support of 
the associated function \(|\mathbf{b}|\), and we denote it by \({\rm supp}(\mathbf{b})\).

We denote by 
\({\rm Der}(\R^d,\mu)\) the space of all derivations on 
\((\R^d,\sf d_{\rm Eucl},\mu)\), while
\({\rm Der}_{\infty}(\R^d,\mu)\) stands for the space of all 
\emph{bounded derivations}, \emph{i.e.},
\(
{\rm Der}_{\infty}(\R^d,\mu)\coloneqq
\big\{\mathbf{b}\in {\rm Der}(\R^d,\mu):\, |\mathbf{b}|\in L^{\infty}_\mu(\R^d)\big\}.
\)
We have that \(\big({\rm Der}_{\infty}(\R^d,\mu),\|\cdot\|_b\big)\) is a Banach space,
where we set
\[
\|\mathbf b\|_b\coloneqq \||\mathbf b|\|_{L^{\infty}_\mu(\R^d)},\quad\text{ for every }
\mathbf b\in {\rm Der}_{\infty}(\R^d,\mu).
\]
In what follows we will be concentrated on those elements 
\(\mathbf{b}\in{\rm Der}_{\infty}(\R^d,\mu)\) admitting
\emph{bounded divergence}, \emph{i.e.}, for which there
is a (uniquely determined) function
\({\rm div}(\mathbf{b})\in L^\infty_\mu(\R^d)\) such that 
\[
\int_{\R^d}\mathbf{b}(f)\,{\rm d}\mu=
-\int_{\R^d}f\,{\rm div}(\mathbf{b})\,{\rm d}\mu,
\quad \text{ for every }f\in \LIP_c(\R^d).
\]
The space of all bounded derivations with bounded divergence will be denoted by 
\({\rm Der}_{b}(\R^d,\mu)\), which is a Banach space when endowed with the norm
\[
\|\mathbf{b}\|_{b,b}\coloneqq
\|\mathbf{b}\|_b+
\|{\rm div}(\mathbf{b})\|_{L^{\infty}_\mu(\R^d)}.
\]
Let us recall the Leibniz rule for the divergence: given any
\(h\in\LIP_c(\R^d)\) and \(\mathbf b\in{\rm Der}_b(\R^d,\mu)\),
it holds that \(h\mathbf b\in{\rm Der}_b(\R^d,\mu)\) and
\begin{equation}\label{eq:Leibniz_div}
{\rm div}(h\mathbf b)=\mathbf b(h)+h\,{\rm div}(\mathbf b).
\end{equation}
\begin{definition}\label{def:BV_derivation}
We say that a function 
\(f\in L^1_\mu(\R^d)\) 
belongs to the space \({\rm BV}_{\rm Der}(\R^d,\mu)\) 
if there exists
a \(\LIP_c(\R^d)\)-linear and \(\|\cdot\|_b\)-continuous map 
\(D f\colon {\rm Der}_b(\R^d,\mu)\to \mathscr M(\R^d)\) satisfying
\[
\int_{\R^d}{\rm d}D f(\mathbf{b})=-\int_{\R^d}f\,{\rm div}(\mathbf{b})\,{\rm d}\mu, 
\quad \text{ for every }\mathbf{b}\in {\rm Der}_b(\R^d,\mu).
\]
In this case, the map \(D f\) is uniquely determined.
\end{definition}

Given any \(f\in {\rm BV}_{\rm Der}(\R^d,\mu)\), the \emph{total variation measure} 
associated with \(f\) is defined as the unique finite Borel measure
\(|D_\mu f|_{\rm Der}\) on \(\R^d\) that for each open set
\(\Omega\subseteq \R^d\) satisfies
\[
|D_\mu f|_{\rm Der}(\Omega)=
\sup\left\{\int_\Omega f\,{\rm div}(\mathbf{b})\,{\rm d}\mu:\,
\mathbf{b}\in {\rm Der}_b(\R^d,\mu),\,{\rm supp}(\mathbf{b})\Subset \Omega,\, 
|\mathbf{b}|\leq 1\; 
\mu\text{-a.e.}\right\}.
\]
\section{Different notions of \texorpdfstring{\(W^{1,1}\)}{W11} space}
\subsection{\texorpdfstring{\(W^{1,1}\)}{W11}
space via vector fields}\label{ssec:W1,1}
An approach based on a notion of a 
`space tangent to a measure' (and in turn on the properties of
vector fields with divergence) has been used in the pioneering work \cite{BBS} 
to propose a concept of  Sobolev space \(W^{1,p}\) in the case \(p\in (1,\infty)\).
As observed in \cite{BBF}, the very same technique may be applied in the case \(p=1\).
Below, we recall the definition of \(W^{1,1}\) space from \cite{BBF} and study more 
in details the properties of the `tangential gradient operator' which plays a crucial
role in its definition. 
\medskip

First of all, let us recall the definition of a (measurable) bundle
in \(\R^d\), following quite closely the presentation in
\cite{LPR}. Let \(V\) be a map assigning to any point \(x\in\R^d\) a
vector subspace \(V(x)\) of \(\R^d\). Then we say that \(V\) is a
(measurable) bundle in \(\R^d\) provided
\(\R^d\ni x\mapsto{\sf d}_{\rm Eucl}(y,V(x))\in\R\) is Borel measurable
for any \(y\in\R^d\). A partial order (depending on \(\mu\)) on the
family of all bundles in \(\R^d\) is given as follows: if \(V\) and
\(W\) are bundles in \(\R^d\), we declare that \(V\preceq W\) provided
\(V(x)\subseteq W(x)\) for \(\mu\)-a.e.\ \(x\in\R^d\). Given an exponent
\(p\in[1,\infty]\) and a bundle \(V\) in \(\R^d\), we denote by
\(\Gamma^p_\mu(V)\) the space of all \(L^p_\mu(\R^d)\)-sections of \(V\),
namely,
\[
\Gamma^p_\mu(V)\coloneqq\big\{v\in L^p_\mu(\R^d;\R^d)\,:\,
v(x)\in V(x)\text{ for }\mu\text{-a.e.\ }x\in\R^d\big\}.
\]
Observe that \(\Gamma^p_\mu(V)\) is a closed vector subspace of
\(L^p_\mu(\R^d;\R^d)\) which is closed under multiplication
by \(L^\infty_\mu(\mu)\)-functions.
As proven in \cite[Proposition 2.22]{LPR}, it holds that
\begin{equation}\label{eq:V_leq_W}
V\preceq W\quad\Longleftrightarrow\quad
\Gamma^2_\mu(V)\subseteq\Gamma^2_\mu(W),
\end{equation}
whenever \(V\) and \(W\) are bundles in \(\R^d\).
\begin{lemma}
There exists a \(\preceq\)-minimal bundle \(T_\mu\) in \(\R^d\),
uniquely determined up to \(\mu\)-a.e.\ equality, such that
\begin{equation}\label{eq:def_T_mu}
\text{given any }v\in{\rm D}_\infty({\rm div}_\mu),
\text{ it holds that }v(x)\in T_\mu(x)\text{ for }
\mu\text{-a.e.\ }x\in\R^d.
\end{equation}
\end{lemma}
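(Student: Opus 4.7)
The plan is to construct $T_\mu$ by a selection argument on the family $\mathcal{V}$ of all bundles $V$ in $\R^d$ such that, for every $v\in{\rm D}_\infty({\rm div}_\mu)$, the inclusion $v(x)\in V(x)$ holds for $\mu$-a.e.\ $x$. This family is non-empty since the constant bundle $V(x)\equiv\R^d$ belongs to it. The first structural fact I would establish is that $\mathcal{V}$ is closed under countable intersections: if $(V_n)_n\subseteq\mathcal{V}$ and $V(x)\coloneqq\bigcap_n V_n(x)$, then the containment property for $V$ is immediate (the null sets obstructing $v(x)\in V_n(x)$ for the various $n$ form a countable union, still $\mu$-null), while the measurability of $V$ as a bundle is the only technical point. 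This reduces to showing that the matrix-valued map $x\mapsto P_{V(x)}$, where $P_{V(x)}$ denotes the orthogonal projection onto $V(x)$, is Borel measurable. Writing $V(x)=\bigcap_n W_n(x)$ with $W_n\coloneqq V_1\cap\cdots\cap V_n$ a decreasing intersection, and using von Neumann's identity $P_{W\cap W'}=\lim_{k\to\infty}(P_W P_{W'})^k$ inductively, together with the fact that a monotone sequence of orthogonal projections converges strongly, one obtains $P_V$ as a pointwise limit of Borel-measurable matrix-valued maps, hence measurable.

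Next I would single out $T_\mu$ via a dimension-integral minimisation. Since $\mu$ is Radon and in particular $\sigma$-finite, fix a strictly positive $\rho\in L^1_\mu(\R^d)$ and set $I(V)\coloneqq\int_{\R^d}\dim V(x)\,\rho(x)\,\d\mu(x)\in[0,d\|\rho\|_{L^1_\mu(\R^d)}]$. Letting $I^*\coloneqq\inf\{I(V):V\in\mathcal{V}\}$ and picking a minimising sequence $(V_n)_n\subseteq\mathcal{V}$ with $I(V_n)\to I^*$, I define $T_\mu(x)\coloneqq\bigcap_n V_n(x)$. By the previous step $T_\mu\in\mathcal{V}$, and since $T_\mu(x)\subseteq V_n(x)$ for every $n$ and $\mu$-a.e.\ $x$, one has $\dim T_\mu(x)\leq\dim V_n(x)$ $\mu$-a.e. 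Integrating against $\rho\,\d\mu$ and letting $n\to\infty$ gives $I(T_\mu)\leq I^*$, hence equality.

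The minimality of $T_\mu$ would then be proved by a competitor argument. Given any $V\in\mathcal{V}$, the intersection $V\cap T_\mu$ still lies in $\mathcal{V}$ and satisfies $\dim(V\cap T_\mu)(x)\leq\dim T_\mu(x)$ for $\mu$-a.e.\ $x$. Combining the inequality $I(V\cap T_\mu)\geq I^*=I(T_\mu)$ with this pointwise bound and the strict positivity of $\rho$ forces $\dim(V\cap T_\mu)(x)=\dim T_\mu(x)$ $\mu$-a.e., whence $V(x)\cap T_\mu(x)=T_\mu(x)$ and so $T_\mu(x)\subseteq V(x)$ $\mu$-a.e., that is, $T_\mu\preceq V$. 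Uniqueness of $T_\mu$ up to $\mu$-a.e.\ equality is then immediate from the antisymmetry of $\preceq$: any other minimum $T_\mu'$ satisfies simultaneously $T_\mu\preceq T_\mu'$ and $T_\mu'\preceq T_\mu$.

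I expect the main obstacle to lie in the verification that the pointwise intersection of countably many measurable bundles is again a measurable bundle; once the von Neumann alternating-projection identity is invoked to handle binary intersections, and the monotone-convergence property of projections is used to pass to countable ones, the remaining minimisation and competitor steps form a fairly standard essential-infimum construction.
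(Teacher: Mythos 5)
Your proposal is correct, but it follows a genuinely different route from the paper's. You build \(T_\mu\) top-down, as an essential infimum over the family of all bundles that contain every \(v\in{\rm D}_\infty({\rm div}_\mu)\): closure under countable intersections (measurability via von Neumann's alternating-projection identity), minimisation of the weighted dimension integral \(\int\dim V(x)\,\rho(x)\,\d\mu(x)\), and a competitor argument by intersecting the minimiser with an arbitrary admissible bundle. The paper instead proceeds bottom-up: it takes the \(L^2_\mu\)-closure \(\mathcal M\) of the square-integrable elements of \({\rm D}_\infty({\rm div}_\mu)\), shows via a Leibniz rule and an approximation argument that \(\mathcal M\) is an \(L^\infty_\mu\)-module, and invokes the bundle--module correspondence of \cite[Proposition 2.22]{LPR} to get the unique bundle with \(\Gamma^2_\mu(T_\mu)=\mathcal M\), reading off minimality through \eqref{eq:V_leq_W}. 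Your argument is more elementary and self-contained --- indeed in \(\R^d\) you can bypass the monotone-projection limit entirely, since \(\dim\big(V_1(x)\cap\dots\cap V_n(x)\big)\) is a nonincreasing integer sequence and hence the decreasing intersections stabilise pointwise --- but the paper's construction buys the extra identification \(\Gamma^2_\mu(T_\mu)=\mathcal M\), which is not a by-product of yours and is used downstream (in Remark \ref{rmk:family_C} to produce the countable fibrewise-dense family \(\mathcal C\), and thence in Lemma \ref{lem:grad_mu_closed}); with your construction that identity would need a separate proof. Two routine points you should make explicit: the paper's notion of measurable bundle (Borel distance functions) is equivalent to Borel measurability of \(x\mapsto P_{V(x)}\), via \({\sf d}_{\rm Eucl}\big(y,V(x)\big)^2=|y|^2-|P_{V(x)}y|^2\) and polarization --- you need this in both directions, to start your induction and to conclude --- and measurability of \(x\mapsto\dim V(x)={\rm tr}\,P_{V(x)}\), so that \(I(V)\) is well defined.
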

\begin{proof}
We subdivide the proof into three steps:\\
{\color{blue}\textsc{Step 1.}} First of all, let us define
\begin{equation}\label{eq:V_def}
\mathcal V\coloneqq
\big\{v\in {\rm D}_{\infty}({\rm div}_\mu):\, |v|\in L^2_\mu(\R^d)\big\} 
\quad \text{ and }\quad
\mathcal M\coloneqq {\rm cl}_{L^2_\mu(\R^d;\R^d)}(\mathcal V).
\end{equation}
We claim that
\begin{equation}\label{eq:T_mu_aux1}
v\in{\rm D}_\infty({\rm div}_\mu)\,\text{ and }\,f\in C^\infty_c(\R^d)
\quad\Longrightarrow\quad fv\in\mathcal V\,\text{ and }
\,{\rm div}_\mu(fv)=f\,{\rm div}_\mu(v)+\nabla f\cdot v.
\end{equation}
To prove it, notice that for every
\(g\in C^\infty_c(\R^d)\) we have that \(fg\in C^\infty_c(\R^d)\)
and \(\nabla(fg)=f\nabla g+g\nabla f\), thus accordingly
\[\begin{split}
\int_{\R^d}\nabla g\cdot(fv)\,\d\mu&=
\int_{\R^d}(f\nabla g)\cdot v\,\d\mu=\int_{\R^d}
\nabla(fg)\cdot v\,\d\mu-\int_{\R^d}(g\nabla f)\cdot v\,\d\mu\\
&=-\int_{\R^d}g\big(f\,{\rm div}_\mu(v)+\nabla f\cdot v\big)\,\d\mu.
\end{split}\]
Since \(fv\in L^\infty_\mu(\R^d;\R^d)\cap L^2_\mu(\R^d;\R^d)\) and
\(f\,{\rm div}_\mu(v)+\nabla f\cdot v\in L^\infty_\mu(\R^d)\),
we have obtained \eqref{eq:T_mu_aux1}.\\
{\color{blue}\textsc{Step 2.}} Next we claim that
\begin{equation}\label{eq:T_mu_aux2}
v\in\mathcal M\,\text{ and }\,f\in L^\infty_\mu(\R^d)
\quad\Longrightarrow\quad fv\in\mathcal M.
\end{equation}
To prove it, fix \((v_n)_n\subseteq\mathcal V\) such that
\(v_n\to v\) in \(L^2_\mu(\R^d;\R^d)\). Moreover, we can find
\((f_n)_n\subseteq C^\infty_c(\R^d)\) such that \(|f_n|\leq\|f\|_{L^\infty_\mu(\R^d)}\)
for every \(n\in\N\) and \(f_n\to f\) in the \(\mu\)-a.e.\ sense.
Indeed, chosen a finite Borel measure \(\tilde\mu\) on \(\R^d\)
having the same null sets as \(\mu\), there exists a sequence
\((g_n)_n\subseteq\LIP_c(\R^d)\) satisfying \(g_n\to f\)
in \(L^2_{\tilde\mu}(\R^d)\). Up to replacing \(g_n\) with
\((g_n\wedge\|f\|_{L^\infty_\mu(\R^d)})\vee(-\|f\|_{L^\infty_\mu(\R^d)})\),
we can assume that \(|g_n|\leq\|f\|_{L^\infty_\mu(\R^d)}\) holds
\(\mu\)-a.e.. Up to passing to a not relabeled subsequence, we can
further assume that \(g_n\to f\) in the \(\tilde\mu\)-a.e.\ sense
(thus, in the \(\mu\)-a.e.\ sense). Now choose \((\varepsilon_n)_n
\subseteq(0,1)\) such that each function
\(f_n\coloneqq\rho_{\varepsilon_n}*g_n\in C^\infty_c(\R^d)\)
satisfies \(\|f_n-g_n\|_{C_b(\R^d)}\leq 1/n\).
In particular, it holds that \(|f_n|\leq\|f\|_{L^\infty_\mu(\R^d)}\)
for every \(n\in\N\) and \(f_n\to f\) in the \(\mu\)-a.e.\ sense,
as desired. Therefore, we can estimate
\[\begin{split}
\|f_n v_n-fv\|_{L^2_\mu(\R^d;\R^d)}^2&\leq
2\int_{\R^d}|f_n-f|^2|v|^2\,\d\mu+2\int_{\R^d}|f_n|^2|v_n-v|^2\,\d\mu\\
&\leq 2\int_{\R^d}|f_n-f|^2|v|^2\,\d\mu+2\,\|f\|_{L^\infty_\mu(\R^d)}^2
\|v_n-v\|_{L^2_\mu(\R^d;\R^d)}^2.
\end{split}\]
Since \(|f_n-f|^2|v|^2\leq 2\,\|f\|_{L^\infty_\mu(\R^d)}^2|v|^2
\in L^1_\mu(\R^d)\) for every \(n\in\N\), by dominated convergence
theorem we deduce that \(f_n v_n\to fv\) in \(L^2_\mu(\R^d;\R^d)\).
As \((f_n v_n)_n\subseteq\mathcal V\) by \eqref{eq:T_mu_aux1}, we
conclude that \(fv\in\mathcal M\).\\
{\color{blue}\textsc{Step 3.}} We are now in a position to apply
\cite[Proposition 2.22]{LPR}:
\eqref{eq:T_mu_aux2} grants that \(\mathcal M\) is a
\(L^2_\mu(\R^d)\)-normed \(L^\infty_\mu(\R^d)\)-submodule of
\(L^2_\mu(\R^d;\R^d)\) in the sense of \cite[Definition 1.2.10]{gigli2018nonsmooth},
thus there exists a unique bundle \(T_\mu\) in \(\R^d\) such
that \(\Gamma^2_\mu(T_\mu)=\mathcal M\). To show that
\(T_\mu\) satisfies \eqref{eq:def_T_mu}, let us fix
\(v\in {\rm D}_\infty({\rm div}_\mu)\) and a sequence 
\((\eta_n)_n\subseteq C^\infty_c(\R^d)\) such that 
\(0\leq\eta_n\leq 1\) and \(\eta_n=1\) on \(B_n(0)\)
for every \(n\in \N\). Fix \(n\in \N\) and notice that
\(\eta_n v\in \mathcal V\subseteq\Gamma^2_\mu(T_\mu)\)
by \eqref{eq:T_mu_aux1}. Hence, \(v(x)=(\eta_n v)(x)\in T_\mu(x)\)
for \(\mu\)-a.e.\ \(x\in B_n(0)\). Thanks to the arbitrariness of
\(n\in \N\), we obtain that \(T_\mu\) satisfies \eqref{eq:def_T_mu}.

Finally, we are left to prove the minimality of \(T_\mu\), which
also forces uniqueness. Fix an arbitrary bundle \(S\) in \(\R^d\)
satisfying the property in \eqref{eq:def_T_mu} with \(S(x)\) in
place of \(T_\mu(x)\). We aim to show that \(T_\mu(x)\subseteq S(x)\)
for \(\mu\)-a.e.\ \(x\in \R^d\). Taking into account \eqref{eq:V_leq_W},
we can equivalently show that \(\Gamma^2_\mu(T_\mu)\subseteq \Gamma^2_\mu(S)\). 
Pick any \(v\in \Gamma^2_\mu(T_\mu)\) and a sequence
\((v_n)_n\subseteq \mathcal V\) such that \(v_n\to v\) in
\(L^2_\mu(\R^d;\R^d)\). Up to a not relabeled subsequence, we have that
\(v_n(x)\to v(x)\) for \(\mu\)-a.e.\ \(x\in \R^d\). Given that for
\(\mu\)-a.e.\ \(x\in \R^d\) it holds that \(v_n(x)\in S(x)\) for every
\(n\in \N\), we conclude that \(v(x)\in S(x)\) for \(\mu\)-a.e.\ \(x\in \R^d\).
\end{proof} 
\begin{remark}\label{rmk:family_C}{\rm
Let us point out that there exists a countable family 
\(\mathcal C\subseteq {\rm D}_\infty({\rm div}_\mu)\), such that
\[
(w(x))_{w\in \mathcal C}\, \text{ is dense in } T_\mu(x)\,
\text{ for }\mu\text{-a.e.\ }x\in \R^d.
\] 
To verify this, observe that \(\mathcal V\subseteq {\rm D}_\infty({\rm div}_\mu)\)
defined in \eqref{eq:V_def} is a linear subspace of \(L^2_\mu(\R^d;\R^d)\). Moreover, 
it is closed under the multiplication by \(C^\infty_c(\R^d)\)-functions, 
due to \eqref{eq:T_mu_aux1}. Take now any countable \(L^2_\mu(\R^d;\R^d)\)-dense 
subset \(\mathcal C\subseteq \mathcal V\subseteq {\rm D}_\infty({\rm div}_\mu)\) and define 
\[
V(x)\coloneqq {\rm cl}\big(\{w(x):\, w\in \mathcal C\}\big)\, 
\text{ for }\mu\text{-a.e.\ }x\in \R^d.
\]
By applying \cite[Lemma 2.24]{LPR} (see also \cite[Lemma A.1]{BF_Second_order}), 
we have that 
\(\Gamma^2_\mu(V)={\rm cl}_{L^2_\mu(\R^d;\R^d)}(\mathcal V)=\mathcal M\). 
On the other hand, we have from the construction of the bundle \(T_\mu\) that 
\(\mathcal M=\Gamma^2_\mu(T_\mu)\). Thus, \(\Gamma^2_\mu(T_\mu)=\Gamma^2_\mu(V)\) which 
further implies (recalling \cite[Proposition 2.22]{LPR}) that \(T_\mu=V\). 
Therefore, 
\(\{w(x):\, w\in \mathcal C\}\) is dense in \(T_\mu(x)\) for \(\mu\)-a.e.\ 
\(x\in \R^d\), as claimed.
\fr}\end{remark}

Once we have the tangent fibers at our disposal, we are in a position to 
define the \emph{tangential gradient}: namely, we define 
\(\nabla_{\mu}\colon C^{\infty}_c(\R^d)\to \Gamma^1_\mu(T_\mu)\) as 
\begin{equation}\label{eq:tangential_grad_smooth}
\nabla_{\mu}f(x)\coloneqq {\rm pr}_{T_\mu(x)}\big(\nabla f(x)\big),\quad 
\text{ for every }f\in C^{\infty}_c(\R^d)\text{ and }\mu\text{-a.e. } x\in\R^d,
\end{equation}
where \({\rm pr}_{V}\colon \R^d\to V\) stands for the orthogonal projection onto the 
vector subspace \(V\) of \(\R^d\).
\smallskip

In \cite{BBF} the space of \(W^{1,1}\) functions 
has been defined as follows:

\begin{definition}[\(W^{1,1}\) space via vector fields]
The Sobolev space \(W^{1,1}(\R^d,\mu)\) is defined as the completion of 
\(C^{\infty}_c(\R^d)\) under the norm
\[
\|f\|_{W^{1,1}_\mu}\coloneqq \|f\|_{L^1_\mu(\R^d)}+
\|\nabla_{\mu} f\|_{\Gamma^1_\mu(T_\mu)}.
\]
\end{definition}

Observe that, a priori, \(\big(W^{1,1}(\R^d,\mu),\|\cdot\|_{W^{1,1}_\mu}\big)\)
is an abstract Banach space. Let us now show that \(W^{1,1}(\R^d,\mu)\)
is actually a space of functions, \emph{i.e.}, that it can be
identified with a linear subspace of \(L^1_\mu(\R^d)\). The inclusion
map \(\big(C^\infty_c(\R^d),\|\cdot\|_{W^{1,1}_\mu}\big)\hookrightarrow
\big(L^1_\mu(\R^d),\|\cdot\|_{L^1_\mu(\R^d)}\big)\) is a linear
contraction. Denoting by \(\iota\colon C^\infty_c(\R^d)
\hookrightarrow W^{1,1}(\R^d,\mu)\) the canonical isometric embedding
of \(\big(C^\infty_c(\R^d),\|\cdot\|_{W^{1,1}_\mu}\big)\) into its
completion \(W^{1,1}(\R^d,\mu)\), there exists a unique linear
contraction \(\phi\colon W^{1,1}(\R^d,\mu)\to L^1_\mu(\R^d)\)
such that the following diagram is commutative:
\begin{equation}\label{eq:def_phi}\begin{tikzcd}
C^\infty_c(\R^d) \arrow[d,swap,"\iota",hook] \arrow[r,hook] &
L^1_\mu(\R^d) \\ W^{1,1}(\R^d,\mu) \arrow[ru,swap,"\phi"] &
\end{tikzcd}\end{equation}
Our aim is to prove that \(\phi\) is injective.
To achieve this goal, we need the following key result.
\begin{lemma}[Closability of the tangential gradient]\label{lem:grad_mu_closed} 
Let 
\((f_n)_{n\in \N}\subseteq C^\infty_c(\R^d)\) and \(v\in\Gamma^1_\mu(T_\mu)\) 
be such that 
\[
f_n\rightharpoonup 0\, \text{ in }L^1_\mu(\R^d)\quad \text{ and }
\quad \nabla_\mu f_n\rightharpoonup v\, \text{ in }\Gamma^1_\mu(T_\mu). 
\]
Then \(v(x)=0\) for \(\mu\)-a.e.\ \(x\in \R^d\).
\end{lemma}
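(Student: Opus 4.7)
The plan is to test the weak limit against vector fields produced by the countable family $\mathcal C$ of Remark \ref{rmk:family_C}, and to extract the integration by parts identity via the Leibniz rule \eqref{eq:T_mu_aux1}.

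First, for any $w\in{\rm D}_\infty({\rm div}_\mu)$ and any $g\in C^\infty_c(\R^d)$, step \eqref{eq:T_mu_aux1} of the proof of the preceding lemma gives that $gw\in\mathcal V$ with
\[
{\rm div}_\mu(gw)=g\,{\rm div}_\mu(w)+\nabla g\cdot w,
\]
which is bounded and compactly supported in $\R^d$; likewise $gw\in L^\infty_\mu(\R^d;\R^d)$ with compact support. Taking any $f\in C^\infty_c(\R^d)$ and using that $w(x)\in T_\mu(x)$ for $\mu$-a.e.\ $x$, together with the fact that $\nabla_\mu f$ is the orthogonal projection of $\nabla f$ onto $T_\mu$, I would write
\[
\int_{\R^d}\nabla_\mu f\cdot(gw)\,\d\mu=\int_{\R^d}\nabla f\cdot(gw)\,\d\mu=-\int_{\R^d}f\,{\rm div}_\mu(gw)\,\d\mu.
\]

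Second, I would apply this identity with $f=f_n$ and pass to the limit. Since ${\rm div}_\mu(gw)\in L^\infty_\mu(\R^d)$ and $f_n\rightharpoonup 0$ in $L^1_\mu(\R^d)$, the right-hand side tends to $0$; since $gw\in L^\infty_\mu(\R^d;\R^d)$ and $\nabla_\mu f_n\rightharpoonup v$ weakly in $L^1_\mu(\R^d;\R^d)$, the left-hand side tends to $\int_{\R^d}v\cdot(gw)\,\d\mu$. Hence
\[
\int_{\R^d}g\,(v\cdot w)\,\d\mu=0\qquad\text{for every }g\in C^\infty_c(\R^d).
\]
Because $v\cdot w\in L^1_\mu(\R^d)$ (as $v\in L^1_\mu$ and $w\in L^\infty_\mu$) and $C^\infty_c(\R^d)$ is dense in $C_0(\R^d)$, the finite signed measure $(v\cdot w)\mu$ annihilates a generating set and is therefore zero, so $v\cdot w=0$ $\mu$-a.e.

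Third, I would specialize $w$ to the countable family $\mathcal C\subseteq{\rm D}_\infty({\rm div}_\mu)$ from Remark \ref{rmk:family_C}. Taking the intersection of the full-$\mu$-measure sets indexed by $w\in\mathcal C$, we find a $\mu$-negligible set $N$ such that $v(x)\cdot w(x)=0$ for every $x\in\R^d\setminus N$ and every $w\in\mathcal C$. Since $\{w(x):w\in\mathcal C\}$ is dense in $T_\mu(x)$ for $\mu$-a.e.\ $x$, this forces $v(x)\perp T_\mu(x)$ for $\mu$-a.e.\ $x$. Combined with $v(x)\in T_\mu(x)$ coming from $v\in\Gamma^1_\mu(T_\mu)$, this gives $v(x)=0$ $\mu$-a.e., as desired.

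The argument is conceptually routine; the one delicate point is the passage to the limit in step two, which is why the cutoff $g\in C^\infty_c(\R^d)$ is essential: it guarantees that both $gw$ and ${\rm div}_\mu(gw)$ lie in $L^\infty_\mu$, the natural predual of $L^1_\mu$ convergence, and the Leibniz identity \eqref{eq:T_mu_aux1} keeps us inside $\mathcal V$ while doing so.
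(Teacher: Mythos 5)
Your proposal is correct and follows essentially the same route as the paper: integration by parts via \eqref{eq:T_mu_aux1} against test fields \(gw\) with \(w\) in the countable family \(\mathcal C\) of Remark \ref{rmk:family_C}, passage to the limit using the \(L^1\)--\(L^\infty\) weak pairing, and the fiberwise density of \(\mathcal C\) combined with \(v\in\Gamma^1_\mu(T_\mu)\) to force \(v=0\). The only (harmless) divergence is at the step deducing \(v\cdot w=0\) \(\mu\)-a.e.: you argue directly that the finite signed measure \((v\cdot w)\mu\) annihilates \(C^\infty_c(\R^d)\), hence \(C_0(\R^d)\), whereas the paper first upgrades the test class from \(C^\infty_c(\R^d)\) to \(\LIP_c(\R^d)\) by a mollification and dominated-convergence argument; both are valid.
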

\begin{proof}
Fix \(g\in C^{\infty}_c(\R^d)\) and \(w\in \mathcal C\), 
where \(\mathcal C\subseteq D_{\infty}({\rm div}_\mu)\) is a countable family 
as in Remark \ref{rmk:family_C}. 
Then, taking into account the property \eqref{eq:T_mu_aux1}, we have that
\[
\int_{\R^d}g\,w\cdot v\,\d\mu=
\lim_{n\to\infty}\int_{\R^d} g\, w\cdot \nabla_\mu f_n\,\d\mu=
-\lim_{n\to\infty}\int_{\R^d}f_n\,{\rm div}_\mu(g\,w)\,\d\mu=0.
\]
Now, let \(f\in \LIP_c(\R^d)\) and let \((g_n)_n\subseteq C^{\infty}_c(\R^d)\) 
be such that
\(g_n\to f\) pointwise \(\mu\)-a.e.\ and \(|g_n|\leq \|f\|_{L^\infty_\mu(\R^d)}\)
for every \(n\in \N\) 
(the existence of such a sequence follows from a standard mollification argument, 
cf.\ Lemma \ref{lemma:approx_lip_via_smooth}).
Then, by applying the dominated convergence theorem and using the above equality,
we get that
\[
\int_{\R^d}f\,w\cdot v\,\d\mu=
\lim_{n\to\infty}\int_{\R^d}g_n\, w\cdot v\,\d\mu=0.
\]
Since the function \(f\in \LIP_c(\R^d)\) was arbitrary, we deduce that \(w\cdot v=0\)
holds \(\mu\)-a.e.\ in \(\R^d\).  
By Remark \ref{rmk:family_C}, we know that the elements of the family \(\mathcal C\) 
are fiberwise dense in \(\mu\)-a.e.\ fiber of \(T_\mu\). 
Thus, by the arbitrariness of \(w\in \mathcal C\), 
we finally conclude that \(v(x)=0\) for \(\mu\)-a.e.\ \(x\in \R^d\).
\end{proof}
\begin{corollary}\label{cor:W11_space_fcts}
The map \(\phi\) as in \eqref{eq:def_phi} is injective.
In particular, \(W^{1,1}(\R^d,\mu)\) can be identified with
a linear subspace of \(L^1_\mu(\R^d)\).
\end{corollary}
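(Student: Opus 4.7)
The plan is to unwind the definition of the abstract completion and then invoke the closability lemma just proved.

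Suppose $F \in W^{1,1}(\R^d,\mu)$ satisfies $\phi(F)=0$. By the very construction of the completion, there exists a sequence $(f_n)_n \subseteq C^\infty_c(\R^d)$ such that $\iota(f_n) \to F$ in $W^{1,1}(\R^d,\mu)$; in particular $(\iota(f_n))_n$ is Cauchy in $W^{1,1}(\R^d,\mu)$, which (since $\iota$ is an isometry onto its image) is the same as saying that $(f_n)_n$ is Cauchy with respect to $\|\cdot\|_{W^{1,1}_\mu}$. From this we extract two pieces of information: first, $(f_n)_n$ is Cauchy in $L^1_\mu(\R^d)$; second, $(\nabla_\mu f_n)_n$ is Cauchy in the Banach space $\Gamma^1_\mu(T_\mu)$ and therefore converges strongly to some $v \in \Gamma^1_\mu(T_\mu)$.

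Next I would use the commutativity of the diagram defining $\phi$: $\phi(\iota(f_n)) = f_n$ in $L^1_\mu(\R^d)$, and continuity of $\phi$ gives $f_n \to \phi(F) = 0$ strongly in $L^1_\mu(\R^d)$. Strong convergence is stronger than weak convergence, so $f_n \rightharpoonup 0$ in $L^1_\mu(\R^d)$ and $\nabla_\mu f_n \rightharpoonup v$ in $\Gamma^1_\mu(T_\mu)$. At this point Lemma \ref{lem:grad_mu_closed} applies directly and yields $v = 0$.

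Consequently $\|\iota(f_n)\|_{W^{1,1}_\mu} = \|f_n\|_{L^1_\mu(\R^d)} + \|\nabla_\mu f_n\|_{\Gamma^1_\mu(T_\mu)} \to 0$, i.e.\ $\iota(f_n) \to 0$ in $W^{1,1}(\R^d,\mu)$; but $\iota(f_n) \to F$ by construction, so $F=0$ by uniqueness of limits, proving injectivity of $\phi$. The identification statement then follows by pushing the $W^{1,1}(\R^d,\mu)$ vector space structure through the injection $\phi$.

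There is essentially no obstacle beyond checking these routine functional-analytic steps: all the real work was already done in Lemma \ref{lem:grad_mu_closed}. The only subtle point to be careful about is not to confuse the (abstract) element $F \in W^{1,1}(\R^d,\mu)$ with the $L^1_\mu$-function $\phi(F)$ before injectivity is established, which is exactly why the argument is phrased as above through an approximating sequence rather than a direct manipulation of $F$.
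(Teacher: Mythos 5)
Your proof is correct and follows essentially the same route as the paper's: extract a $C^\infty_c$ approximating sequence from the completion, use commutativity and continuity of $\phi$ to get $f_n\to 0$ in $L^1_\mu(\R^d)$, and apply the closability Lemma \ref{lem:grad_mu_closed} (noting strong convergence implies the weak convergence it requires) to conclude $v=0$ and hence $F=0$. Your closing remark about not conflating $F$ with $\phi(F)$ before injectivity is established matches exactly the caution the paper itself flags with its distinct font for abstract elements.
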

\begin{proof}
To prove the claim amounts to showing that if
\({\sf f}\in W^{1,1}(\R^d,\mu)\) and \(\phi({\sf f})=0\),
then \({\sf f}=0\); we are using the different font
\(\sf f\) to underline that, a priori, the elements of
\(W^{1,1}(\R^d,\mu)\) are not functions. Choose a sequence
\((f_n)_{n\in\N}\subseteq C^\infty_c(\R^d)\) such that
\(\|\iota(f_n)-{\sf f}\|_{W^{1,1}_\mu}\to 0\). In particular,
the sequences \((f_n)_{n\in\N}\subseteq L^1_\mu(\R^d)\) and
\((\nabla_\mu f_n)_{n\in\N}\subseteq\Gamma^1_\mu(T_\mu)\) are
Cauchy, thus there exist elements \(f\in L^1_\mu(\R^d)\) and
\(v\in\Gamma^1_\mu(T_\mu)\) such that \(f_n\to f\) and
\(\nabla_\mu f_n\to v\). Being \(\phi\) continuous, we have that
\(f_n=\phi(\iota(f_n))\to\phi({\sf f})=0\) in \(L^1_\mu(\R^d)\),
whence it follows that \(f=0\). Hence, an application of Lemma
\ref{lem:grad_mu_closed} yields the identity \(v=0\). All in all,
we proved that \(\|f_n\|_{W^{1,1}_\mu}\to 0\), so that \({\sf f}=0\).
\end{proof}

In light of Corollary \ref{cor:W11_space_fcts}, hereafter we will
tacitly regard \(W^{1,1}(\R^d,\mu)\) as a subspace of
\(L^1_\mu(\R^d)\). Next we show that the tangential gradient
\(\nabla_\mu\) can be extended to the whole of \(W^{1,1}(\R^d,\mu)\):
\begin{proposition}\label{prop:ext_tg_grad}
There exists a unique linear extension
\(\nabla_\mu\colon W^{1,1}(\R^d,\mu)\to\Gamma_\mu^1(T_\mu)\)
of the tangential gradient \(\nabla_\mu\colon C^\infty_c(\R^d)
\to\Gamma^1_\mu(T_\mu)\) having the following property:
if \((f_n)_{n\in\N}\subseteq W^{1,1}(\R^d,\mu)\) satisfies
\(f_n\rightharpoonup f\) in \(L^1_\mu(\R^d)\) and
\(\nabla_\mu f_n\rightharpoonup v\) in \(\Gamma^1_\mu(T_\mu)\)
for some \(f\in L^1_\mu(\R^d)\) and \(v\in\Gamma^1_\mu(T_\mu)\),
then \(f\in W^{1,1}(\R^d,\mu)\) and \(\nabla_\mu f=v\).
Moreover, it holds that
\begin{equation}\label{eq:formula_W11_norm}
\|f\|_{W^{1,1}_\mu}=\|f\|_{L^1_\mu(\R^d)}+
\|\nabla_\mu f\|_{\Gamma^1_\mu(\R^d)},\quad
\text{ for every }f\in W^{1,1}(\R^d,\mu).
\end{equation}
\end{proposition}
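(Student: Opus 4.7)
The plan is to construct the extension by approximation from \(C^\infty_c(\R^d)\), using the closability result (Lemma \ref{lem:grad_mu_closed}) for well-definedness. Given \(f\in W^{1,1}(\R^d,\mu)\), I would pick a sequence \((f_n)\subseteq C^\infty_c(\R^d)\) with \(\iota(f_n)\to f\) in the \(\|\cdot\|_{W^{1,1}_\mu}\)-norm. Then \((f_n)\) is Cauchy both in \(L^1_\mu(\R^d)\) (with limit \(f\), by Corollary \ref{cor:W11_space_fcts} applied to the diagram \eqref{eq:def_phi}) and in \(\Gamma^1_\mu(T_\mu)\) (with some limit \(v\)); I would set \(\nabla_\mu f\coloneqq v\). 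For well-definedness, a second such sequence \((g_n)\) yields \(f_n-g_n\to 0\) in \(L^1_\mu(\R^d)\) and \(\nabla_\mu(f_n-g_n)\) converges to the difference of the two candidate limits; since strong convergence implies weak convergence, Lemma \ref{lem:grad_mu_closed} forces this difference to vanish. Linearity of the extension is immediate. Passing to the limit in \(\|f_n\|_{W^{1,1}_\mu}=\|f_n\|_{L^1_\mu(\R^d)}+\|\nabla_\mu f_n\|_{\Gamma^1_\mu(T_\mu)}\) and using the continuity of norms then delivers \eqref{eq:formula_W11_norm}.

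Next I would verify the weak closure property. Assume \((f_n)\subseteq W^{1,1}(\R^d,\mu)\) satisfies \(f_n\rightharpoonup f\) in \(L^1_\mu(\R^d)\) and \(\nabla_\mu f_n\rightharpoonup v\) in \(\Gamma^1_\mu(T_\mu)\). Viewing the pair \((f_n,\nabla_\mu f_n)\) as an element of \(L^1_\mu(\R^d;\R^{d+1})\) (using that \(\Gamma^1_\mu(T_\mu)\) embeds as a closed subspace of \(L^1_\mu(\R^d;\R^d)\)), the whole pair converges weakly to \((f,v)\). Applying Proposition \ref{prop:FA_facts}(ii) in this product space, I obtain coefficients \((\alpha^n_i)_{i=n}^{N_n}\subseteq[0,1]\) with \(\sum_i\alpha^n_i=1\) such that the convex combinations \(g_n\coloneqq\sum_{i=n}^{N_n}\alpha^n_i f_i\) and \(\sum_{i=n}^{N_n}\alpha^n_i\nabla_\mu f_i\) converge strongly to \(f\) and \(v\), respectively. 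By the linearity of the extension, the latter sum is exactly \(\nabla_\mu g_n\), so \((g_n)\subseteq W^{1,1}(\R^d,\mu)\) is Cauchy in the \(\|\cdot\|_{W^{1,1}_\mu}\)-norm.

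Completeness of \(W^{1,1}(\R^d,\mu)\) together with the commutativity of \eqref{eq:def_phi} then forces \((g_n)\) to converge in \(W^{1,1}_\mu\)-norm to an element whose image under \(\phi\) is \(f\); hence \(f\in W^{1,1}(\R^d,\mu)\). Since the extension is a contraction with respect to \(\|\cdot\|_{W^{1,1}_\mu}\) (as seen from \eqref{eq:formula_W11_norm}), continuity gives \(\nabla_\mu g_n\to\nabla_\mu f\) in \(\Gamma^1_\mu(T_\mu)\), and comparison with the strong limit \(v\) yields \(\nabla_\mu f=v\). Uniqueness of the extension is then immediate: any linear map with the stated closure property agrees with ours on \(C^\infty_c(\R^d)\) and is continuous along \(W^{1,1}_\mu\)-norm convergent sequences (strong convergence implies weak convergence in both factors), so it is uniquely determined on the dense subset \(C^\infty_c(\R^d)\) and hence on the whole of \(W^{1,1}(\R^d,\mu)\).

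The main obstacle I anticipate is performing the weak-to-strong reduction in the closure step in a way that is compatible with linearity of \(\nabla_\mu\): treating the two weak convergences separately would yield unrelated convex combinations in \(L^1_\mu(\R^d)\) and \(\Gamma^1_\mu(T_\mu)\), and one would lose the identity \(\sum\alpha^n_i\nabla_\mu f_i=\nabla_\mu(\sum\alpha^n_i f_i)\). The joint Mazur argument in the product space \(L^1_\mu(\R^d;\R^{d+1})\), exploiting that \(\Gamma^1_\mu(T_\mu)\) is a closed subspace, is the key trick that makes the approximation simultaneously strong in both components while keeping everything inside \(W^{1,1}(\R^d,\mu)\).
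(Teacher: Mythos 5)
Your proposal is correct and follows essentially the same route as the paper: define the extension via strong \(W^{1,1}_\mu\)-approximation by smooth functions, use Lemma \ref{lem:grad_mu_closed} for well-posedness, obtain \eqref{eq:formula_W11_norm} by passing to the limit in the norms, and establish the weak-closure property via Mazur's lemma. The only (harmless) deviations are that you apply Mazur directly to the \(W^{1,1}\)-sequence using linearity of the extended gradient and completeness, where the paper first replaces each \(f_n\) by a \(1/n\)-close smooth function, and that you spell out the joint Mazur argument in \(L^1_\mu(\R^d;\R^{d+1})\) -- a step the paper invokes implicitly -- which is indeed exactly what makes the two strong convergences compatible with \(\nabla_\mu\).
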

\begin{proof}
Let \(f\in W^{1,1}(\R^d,\mu)\) be given. Pick any sequence
\((f_n)_{n\in\N}\subseteq C^\infty_c(\R^d)\) such that
\(f_n\to f\) strongly in \(W^{1,1}(\R^d,\mu)\). In particular,
\((\nabla_\mu f_n)_{n\in\N}\) is a Cauchy sequence in \(\Gamma_\mu^1(T_\mu)\).
Then we define \(\tilde \nabla_\mu f\in\Gamma^1_\mu(T_\mu)\) as the
limit of \(\nabla_\mu f_n\) as \(n\to\infty\). Notice that this
definition is well-posed, \emph{i.e.}, \(\tilde\nabla_\mu f\) does not
depend on the specific choice of \((f_n)_n\): indeed, given another
sequence \((g_n)_{n\in\N}\subseteq C^\infty_c(\R^d)\) such that
\(g_n\to f\) in \(W^{1,1}(\R^d,\mu)\), we have that \(f_n-g_n\to 0\)
in \(L^1_\mu(\R^d)\), thus Lemma \ref{lem:grad_mu_closed} ensures
that \(\nabla_\mu f_n-\nabla_\mu g_n=\nabla_\mu(f_n-g_n)\) converges
to \(0\) in \(\Gamma^1_\mu(T_\mu)\). Moreover, if
\(f\in C^\infty_c(\R^d)\), then by taking the constant sequence
\(f_n\equiv f\) we see that \(\tilde\nabla_\mu f=\nabla_\mu f\).
Then we can omit the tilde from our notation and obtain an extension
\(\nabla_\mu\colon W^{1,1}(\R^d,\mu)\to\Gamma^1_\mu(T_\mu)\) of the
tangential gradient. Linearity readily follows from the fact that
\(\nabla_\mu\) is linear when restricted to \(C^\infty_c(\R^d)\).
Uniqueness is granted by its very construction. Moreover, observe that if
\(f\in W^{1,1}(\R^d,\mu)\) and \((f_n)_{n\in\N}\subseteq C^\infty_c(\R^d)\)
satisfy \(f_n\to f\) in \(W^{1,1}(\R^d,\mu)\), then we have both
\(\|f_n\|_{W^{1,1}_\mu}\to\|f\|_{W^{1,1}_\mu}\) and
\[
\|f_n\|_{W^{1,1}_\mu}=\|f_n\|_{L^1_\mu(\R^d)}+
\|\nabla_\mu f_n\|_{\Gamma^1_\mu(T_\mu)}\to
\|f\|_{L^1_\mu(\R^d)}+\|\nabla_\mu f\|_{\Gamma^1_\mu(T_\mu)},
\]
whence \eqref{eq:formula_W11_norm} follows. Finally,
it remains to show that \(\nabla_\mu\) is a closed operator,
namely, that if \((f_n)_{n\in\N}\subseteq W^{1,1}(\R^d,\mu)\)
satisfies \(f_n\rightharpoonup f\in L^1_\mu(\R^d)\) and
\(\nabla_\mu f_n\rightharpoonup v\in\Gamma^1_\mu(T_\mu)\),
then \(f\in W^{1,1}(\R^d,\mu)\) and \(\nabla_\mu f=v\).
Given any \(n\in\N\), we can find \(g_n\in C^\infty_c(\R^d)\)
such that \(\|g_n-f_n\|_{L^1_\mu(\R^d)}\leq 1/n\) and
\(\|\nabla_\mu g_n-\nabla_\mu f_n\|_{\Gamma^1_\mu(T_\mu)}\leq 1/n\).
In particular, \(g_n\rightharpoonup f\) in \(L^1_\mu(\R^d)\)
and \(\nabla_\mu g_n\rightharpoonup v\) in \(\Gamma^1_\mu(T_\mu)\).
Thanks to Mazur lemma (item ii) of Proposition \ref{prop:FA_facts})
and the linearity of \(\nabla_\mu\),
we may assume (possibly replacing the \(g_n\)'s by their
convex combinations) that \(g_n\to f\) in \(L^1_\mu(\R^d)\) and
\(\nabla_\mu g_n\to v\) in \(\Gamma^1_\mu(T_\mu)\). This implies
that \((g_n)_{n\in\N}\) is Cauchy in \(W^{1,1}(\R^d,\mu)\) and
that its \(L^1_\mu(\R^d)\)-limit coincides with \(f\), so that
\(f\in W^{1,1}(\R^d,\mu)\) and \(\nabla_\mu f=v\). Therefore,
the statement is achieved.
\end{proof}

We conclude the current section by proving that the (extended)
tangential gradient introduced in Proposition \ref{prop:ext_tg_grad} 
satisfies the following Leibniz rule:
\begin{lemma}[Leibniz rule for the tangential gradient]\label{lm:Leibniz_tan_gradient}
Let \(f,g\in W^{1,1}(\R^d,\mu)\cap L^\infty_\mu(\R^d)\). Then 
\begin{equation}\label{eq:Leibniz_tangential}
fg\in W^{1,1}(\R^d,\mu)\quad\text{ and }\quad
\nabla_\mu(fg)=g\nabla_\mu f+f\,\nabla_\mu g.
\end{equation}
\end{lemma}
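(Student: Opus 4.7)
The plan is to first establish the Leibniz rule for $f, g \in C^\infty_c(\R^d)$, where it follows at once from the classical Leibniz rule and the fact that the fiberwise orthogonal projection in \eqref{eq:tangential_grad_smooth} is linear: ${\rm pr}_{T_\mu(x)}\big(g(x)\nabla f(x) + f(x)\nabla g(x)\big) = g(x)\nabla_\mu f(x) + f(x)\nabla_\mu g(x)$ for $\mu$-a.e.\ $x$. The task is then to extend this identity to general $f, g \in W^{1,1}(\R^d,\mu) \cap L^\infty_\mu(\R^d)$ by approximation, closing up through the closability statement of Proposition \ref{prop:ext_tg_grad}.

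The central obstacle is that the approximating sequences $(f_n), (g_n) \subseteq C^\infty_c(\R^d)$ afforded by the definition of $W^{1,1}(\R^d,\mu)$ as a completion need not be uniformly bounded in $L^\infty_\mu(\R^d)$, and such a control is indispensable for handling the products $f_n g_n$, $g_n\nabla_\mu f_n$, and $f_n\nabla_\mu g_n$ in the limit. To remedy this, I would set $M \coloneqq \max\{\|f\|_{L^\infty_\mu(\R^d)}, \|g\|_{L^\infty_\mu(\R^d)}\}$ and fix a smooth truncation $\phi \in C^\infty(\R)$ with $\phi(0) = 0$, $\phi(t) = t$ for $|t| \leq M$, $|\phi| \leq M+1$, and $|\phi'| \leq 1$. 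Since $\phi(0) = 0$ and $f_n$ has compact support, $\phi \circ f_n \in C^\infty_c(\R^d)$ is uniformly bounded by $M+1$, and the classical chain rule combined with the linearity of the fiberwise projection gives $\nabla_\mu(\phi \circ f_n) = \phi'(f_n)\nabla_\mu f_n$. The convergence $\phi(f_n) \to \phi(f) = f$ in $L^1_\mu(\R^d)$ is immediate from $|\phi(f_n) - f| \leq |f_n - f|$, while $\phi'(f_n)\nabla_\mu f_n \to \nabla_\mu f$ in $\Gamma^1_\mu(T_\mu)$ follows by dominated convergence along a subsequence provided by Proposition \ref{prop:FA_facts}(i): continuity of $\phi'$ together with $f_n \to f$ $\mu$-a.e.\ and $|f| \leq M$ yield $\phi'(f_n) \to 1$ $\mu$-a.e., and $|\phi'|\leq 1$ combined with an $L^1_\mu$-dominator of $(\nabla_\mu f_n)_n$ provides the required domination. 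Applying the same procedure to $g$, I obtain approximations $(\tilde f_n), (\tilde g_n) \subseteq C^\infty_c(\R^d)$ uniformly bounded by $M+1$ and converging to $f$ and $g$ in $W^{1,1}(\R^d,\mu)$.

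With these uniformly bounded approximations in hand, the smooth case gives $\nabla_\mu(\tilde f_n \tilde g_n) = \tilde g_n \nabla_\mu \tilde f_n + \tilde f_n \nabla_\mu \tilde g_n$, and $\tilde f_n \tilde g_n \to fg$ in $L^1_\mu(\R^d)$ follows by a triangle-inequality estimate exploiting the uniform $L^\infty$ bound and the strong $L^1_\mu$ convergences. For the gradient side, passing to a further subsequence so that $\tilde g_n \to g$ $\mu$-a.e.\ and $(\nabla_\mu \tilde f_n)_n$ is $L^1_\mu$-dominated (Proposition \ref{prop:FA_facts}(i)), the splitting $\tilde g_n \nabla_\mu \tilde f_n - g\nabla_\mu f = (\tilde g_n - g)\nabla_\mu \tilde f_n + g(\nabla_\mu \tilde f_n - \nabla_\mu f)$ yields $L^1_\mu$-convergence: the first summand vanishes by dominated convergence (with dominator $2(M+1)$ times the $L^1_\mu$-dominator of $\nabla_\mu \tilde f_n$) and the second by the $L^\infty$ bound on $g$; the symmetric argument handles $\tilde f_n \nabla_\mu \tilde g_n$. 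Hence $\tilde g_n \nabla_\mu \tilde f_n + \tilde f_n \nabla_\mu \tilde g_n \to g\nabla_\mu f + f\nabla_\mu g$ in $\Gamma^1_\mu(T_\mu)$, and the closability of $\nabla_\mu$ in Proposition \ref{prop:ext_tg_grad} delivers $fg \in W^{1,1}(\R^d,\mu)$ together with \eqref{eq:Leibniz_tangential}. The only delicate step is the smooth-truncation trick of the second paragraph, which is what unlocks all the subsequent dominated-convergence arguments.
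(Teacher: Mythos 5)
Your proof is correct, but it follows a genuinely different (and in one respect more complete) route than the paper's. The paper argues in three steps: first it proves \eqref{eq:Leibniz_tangential} for \(f,g\in W^{1,1}(\R^d,\mu)\cap L^\infty_\mu(\R^d)\) with \emph{bounded support}, where it assumes ``without loss of generality'' that the smooth approximating sequences satisfy uniform \(L^\infty\) bounds and have supports in a common compact set; then it proves the rule for the product of \(f\in W^{1,1}(\R^d,\mu)\cap L^\infty_\mu(\R^d)\) with a fixed \(\eta\in C^\infty_c(\R^d)\); finally it handles the general case by an exhaustion with cut-offs \(\eta_n\) equal to \(1\) on \(B_n(0)\), splitting \(\nabla_\mu(\eta_n f\,\eta_n g)\) into two pieces and showing each converges strongly because the error terms are supported in \(B_n(0)^c\). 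You bypass this entire reduction: your smooth truncation \(\phi\) (with \(\phi(0)=0\), \(\phi={\rm id}\) on \([-M,M]\), \(|\phi'|\le 1\)) produces, directly in the general case, approximating sequences in \(C^\infty_c(\R^d)\) that are uniformly bounded by \(M+1\), and the verification that \(\phi\circ f_n\to f\) in \(W^{1,1}(\R^d,\mu)\) is sound: the chain rule plus fiberwise linearity of \({\rm pr}_{T_\mu(x)}\) gives \(\nabla_\mu(\phi\circ f_n)=\phi'(f_n)\nabla_\mu f_n\), and since \(\phi'(f)=1\) holds \(\mu\)-a.e.\ (as \(|f|\le M\) \(\mu\)-a.e.\ and \(\phi'\equiv 1\) on \([-M,M]\)), the dominated-convergence argument via Proposition \ref{prop:FA_facts} i) closes the gap; a single limit passage and the closedness statement of Proposition \ref{prop:ext_tg_grad} then finish the proof. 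Notably, your truncation trick is precisely what is needed to justify the ``without loss of generality'' that the paper's \textsc{Step 1} leaves implicit (an \(L^1_\mu\)-convergent sequence of smooth functions need not admit uniformly bounded members without modification, and pointwise truncation destroys smoothness), so your argument is both shorter — no bounded-support case, no cut-off bookkeeping — and makes rigorous a point the paper glosses over; what the paper's structure buys in exchange is the intermediate \textsc{Step 2} (Leibniz rule with one smooth factor), which it needs to know that \(\eta_n f\in W^{1,1}(\R^d,\mu)\) and which is of some independent use, whereas your proof is self-contained.
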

\begin{proof}
We divide the proof into several steps:\\
{\color{blue}\textsc{Step 1.}}
We first prove that \eqref{eq:Leibniz_tangential} holds for any 
\(f,g\in W^{1,1}(\R^d,\mu)\cap L^\infty_\mu(\R^d)\) having bounded support. 
To verify this
claim, let us fix 
sequences \((f_n)_{n\in\N},(g_n)_{n\in\N}\subseteq C^\infty_c(\R^d)\)
such that \(f_n\to f\) and \(g_n\to g\) in \(W^{1,1}(\R^d,\mu)\).
Without loss of generality, we may assume that there exists \(C>0\) and 
a compact set \(K\subseteq \R^d\)
such that \(|f_n|,|g_n|\leq C\) and 
\({\rm supp}(f_n), {\rm supp}(g_n)\subseteq K\), for all \(n\in \N\).
Proposition \ref{prop:ext_tg_grad} ensures that
\(\nabla_\mu f_n\to\nabla_\mu f\) and \(\nabla_\mu g_n\to\nabla_\mu g\)
in \(\Gamma^1_\mu(T_\mu)\). Since also \(f_n\to f\) and \(g_n\to g\)
in \(L^1_\mu(\R^d)\), we know from Proposition \ref{prop:FA_facts} i)
that (up to a not relabelled subsequence) the convergence
\((f_n,\nabla_\mu f_n,g_n,\nabla_\mu g_n)\to(f,\nabla_\mu f,g,\nabla_\mu g)\)
is both dominated and in the pointwise \(\mu\)-a.e.\ sense. 
Now define \(h_n\coloneqq f_n g_n\in C^\infty_c(\R^d)\) and
\(v_n\coloneqq g_n\nabla_\mu f_n+f_n\nabla_\mu g_n\in\Gamma^1_\mu(T_\mu)\)
for every \(n\in\N\). Observe that \(\nabla_\mu h_n=v_n\),
as one can easily verify:
\[\begin{split}
\nabla_\mu h_n(x)&={\rm pr}_{T_\mu(x)}\big(\nabla(f_n g_n)(x)\big)
={\rm pr}_{T_\mu(x)}\big(g_n(x)\nabla f_n(x)+f_n(x)\nabla g_n(x)\big)\\
&=g_n(x){\rm pr}_{T_\mu(x)}\big(\nabla f_n(x)\big)+
f_n(x){\rm pr}_{T_\mu(x)}\big(\nabla g_n(x)\big)
=v_n(x),
\end{split}\]
for \(\mu\)-a.e.\ \(x\in\R^d\). Fixed a non-negative function
\(H\in L^1_\mu(\R^d)\) such that \(|\nabla_\mu f_n|,|\nabla_\mu g_n|
\leq H\) is satisfied \(\mu\)-a.e.\ for every \(n\in\N\), we can estimate
\(|h_n|\leq C^2\chi_K
\in L^1_\mu(\R^d)\) and 
\(|v_n|\leq 2C\chi_K H\in L^1_\mu(\R^d)\) in the
\(\mu\)-a.e.\ sense for every \(n\in\N\). By using the dominated
convergence theorem, we can finally conclude that
\[
h_n\to fg\,\text{ in }L^1_\mu(\R^d)\quad\text{ and }\quad\nabla_\mu h_n
=v_n\to g\nabla_\mu f+f\nabla_\mu g\,\text{ in }\Gamma^1_\mu(T_\mu).
\]
Therefore, Proposition \ref{prop:ext_tg_grad}
implies that \(fg\in W^{1,1}(\R^d,\mu)\) and
\(\nabla_\mu(fg)=g\nabla_\mu f+f\nabla_\mu g\).\\
{\color{blue}\textsc{Step 2.}} We next show that \eqref{eq:Leibniz_tangential} holds 
for \(f\in W^{1,1}(\R^d,\mu)\cap L^{\infty}_\mu(\R^d)\) and \(\eta\in C^{\infty}_c(\R^d)\).
Choosing a sequence of smooth functions \((f_n)_n\subseteq C^{\infty}_c(\R^d)\) such that 
\(f_n\to f\) in \(L^1_\mu(\R^d)\) and \(\nabla_\mu f_n\to \nabla_\mu f\) in 
\(\Gamma^1_\mu(T_\mu)\), we have that 
\(\nabla_\mu(f_n\eta)=\eta\nabla_\mu f_n+f_n\nabla_\mu \eta\) 
holds for every \(n\in \N\), and consequently 
\[
\int_{\R^d}\big|\nabla_\mu(f_n\eta)-(\eta\nabla_\mu f+f\nabla_\mu\eta)\big|\,\d\mu 
\leq \int_{\R^d}|\eta||(\nabla_\mu f_n-\nabla_\mu f)|\,\d\mu 
+\int_{\R^d}|(f_n-f)||\nabla_\mu\eta|\,\d\mu.
\]
Therefore, by passing to the limit as \(n\to \infty\), 
we get \eqref{eq:Leibniz_tangential}.\\
{\color{blue}\textsc{Step 3.}} We finally prove \eqref{eq:Leibniz_tangential} for any 
\(f,g\in W^{1,1}(\R^d,\mu)\cap L^{\infty}_{\mu}(\R^d)\). To this aim, fix a sequence 
\((\eta_n)_n\subseteq C^{\infty}_c(\R^d)\) of cut-off functions, \emph{i.e.}, 
\(0\leq \eta_n\leq 1\),
\(\eta_n=1\) on \(B_n(0)\) and \(|\nabla \eta_n|\leq 1\),
for every \(n\in \N\). 
Then for every \(n\in \N\), call \(f_n\coloneqq \eta_nf\) and \(g_n\coloneqq \eta_ng\) 
and observe that \(f_n,g_n\) are compactly supported functions belonging to  
\(W^{1,1}(\R^d,\mu)\cap L^{\infty}_{\mu}(\R^d)\) by \textsc{Step 2}. Thus, 
by \textsc{Step 1} we have that 
\(f_ng_n\in W^{1,1}(\R^d,\mu)\)
and that 
\[
\begin{split}
\nabla_\mu(f_ng_n)=&f_n\nabla_\mu g_n+g_n\nabla_\mu f_n
=\eta_n f\nabla_\mu(\eta_n g)+\eta_n g\nabla_\mu(\eta_n f)\\
=&\underset{\rm A}
{\underbrace{\big(\eta_n^2f\nabla_\mu g+\eta_n fg\nabla_\mu \eta_n \big)}}
+\underset{\rm B}{\underbrace{\big(\eta_n^2g\nabla_\mu f+\eta_ngf\nabla_\mu\eta_n\big)}}
\end{split}
\]
holds \(\mu\)-a.e.\ in \(\R^d\). Clearly, the sequence 
\((f_ng_n)_n\) converges to \(fg\) strongly in \(L^1_\mu(\R^d)\).
In order to prove \eqref{eq:Leibniz_tangential}, we will show that
\(\nabla_\mu(f_n g_n)\) converges to 
\(f\nabla_\mu g+g\nabla_\mu f\) strongly in \(L^1_\mu(\R^d)\).
We prove that the part of \(\nabla_\mu(f_n g_n)\) denoted by A above converges to 
\(f\nabla_\mu g\) strongly in \(L^1_\mu(\R^d)\). Indeed, it holds that
\[
\int_{\R^d} |\eta_n^2f\nabla_\mu g-f\nabla_\mu g|\,\d\mu\leq 
\int_{\R^d}|f||\nabla_\mu g|(1-\eta_n^2)\,\d\mu
\leq \int_{B_n^c(0)}|f||\nabla_\mu g|\,\d\mu
 \overset{n\to \infty}{\longrightarrow} 0
\]
and that 
\[
\int_{\R^d}|\eta_n f g\nabla_\mu\eta_n|\,\d\mu
\leq \int_{B_n^c(0)}|fg|\,\d\mu \overset{n\to \infty}{\longrightarrow} 0,
\]
proving the claim. Similarly, one can show that the part B converges to 
\(g\nabla_\mu f\) strongly in \(L^1_\mu(\R^d)\), concluding the proof. 
\end{proof}
\subsection{\texorpdfstring{\(W^{1,1}\)}{W11} space via relaxed slope} 
The relaxation type approach to the definition of \(W^{1,1}\) space proposed in
\cite{DMPhD}
is based on the concept of
the `relaxed slope':

\begin{definition}[Relaxed slope]
Let \(f\in L^1_\mu(\R^d)\). A non-negative function \(G\in L^1_\mu(\R^d)\)
is said to be a \emph{relaxed slope} of \(f\) if there exists a sequence 
\((f_n)_n\subseteq \LIP_c(\R^d)\) such that 
\(f_n\to f\) in \(L^1_\mu(\R^d)\) and \({\rm lip}_a(f_n)\rightharpoonup G'\) 
weakly in \(L^1_\mu(\R^d)\), for some \(G'\in L^1_\mu(\R^d)\) with 
\(G'\leq G\) \(\mu\)-a.e.\ in \(\R^d\).
We denote the set of all relaxed slopes of \(f\) by \({\rm RS}(f)\).
\end{definition}
\begin{definition}[\(W^{1,1}\) space via relaxation] 
We say that a function \(f\in L^1_\mu(\R^d)\) belongs to 
the space \(W^{1,1}_{\rm Lip}(\R^d,\mu)\) if \({\rm RS}(f)\neq \emptyset\).
The minimal element (in the \(\mu\)-a.e.\ sense) of \({\rm RS}(f)\) 
will be denoted by \(|\nabla f|_{rs}\) and called the \emph{minimal relaxed slope} 
of \(f\).
\end{definition}
\begin{remark}\label{rmk:approx_lip_a_strong}{\rm
Given any \(f\in W^{1,1}_{\rm Lip}(\R^d,\mu)\),
there exist \((f_n)_{n\in\N}\subseteq\LIP_c(\R^d)\) and
\(H\in L^1_\mu(\R^d)\) such that \(f_n\to f\) strongly in
\(L^1_\mu(\R^d)\), \(\lip_a(f_n)\rightharpoonup|\nabla f|_{rs}\) weakly
in \(L^1_\mu(\R^d)\), and \(\lip_a(f_n)\leq H\) \(\mu\)-a.e.\ for
every \(n\in\N\). Indeed, by exploiting the minimality of
\(|\nabla f|_{rs}\) we can find \((g_n)_{n\in\N}\subseteq\LIP_c(\R^d)\)
such that \(g_n\to f\) strongly in \(L^1_\mu(\R^d)\) and
\(\lip_a(g_n)\rightharpoonup|\nabla f|_{rs}\) weakly in
\(L^1_\mu(\R^d)\). By Proposition \ref{prop:FA_facts} ii),
we can find \((\alpha_i^n)_{i=n}^{N_n}\subseteq[0,1]\)
with \(\sum_{i=n}^{N_n}\alpha_i^n=1\) and
\(\sum_{i=n}^{N_n}\alpha_i^n\lip_a(g_i)\to|\nabla f|_{rs}\)
strongly in \(L^1_\mu(\R^d)\) as \(n\to\infty\).
By Proposition \ref{prop:FA_facts} i), we know that there exists
\(H\in L^1_\mu(\R^d)\) such that (up to a not relabeled subsequence
in \(n\)) it holds \(\sum_{i=n}^{N_n}\alpha_i^n\lip_a(g_i)\leq H\)
\(\mu\)-a.e.\ for every \(n\in\N\). Now we define
\(f_n\coloneqq\sum_{i=n}^{N_n}\alpha_i^n g_i\in\LIP_c(\R^d)\)
for every \(n\in\N\). Notice that \(f_n\to f\) strongly in
\(L^1_\mu(\R^d)\) and
\[
\lip_a(f_n)=\lip_a\bigg(\sum_{i=n}^{N_n}\alpha_i^n g_i\bigg)
\leq\sum_{i=n}^{N_n}\alpha_i^n\lip_a(g_i)\leq H,\quad\text{ in the }
\mu\text{-a.e.\ sense},
\]
for every \(n\in\N\). By Proposition \ref{prop:FA_facts} iii),
we can find \(G\in L^1_\mu(\R^d)\) such that \(G\leq|\nabla f|_{rs}\)
\(\mu\)-a.e.\ and (up to a further subsequence in \(n\)) it holds
\(\lip_a(f_n)\rightharpoonup G\) weakly in \(L^1_\mu(\R^d)\).
Finally, the minimality of \(|\nabla f|_{rs}\) ensures that
\(G=|\nabla f|_{rs}\), thus accordingly the claim is proved.
\fr}\end{remark}
\subsection{\texorpdfstring{\(W^{1,1}\)}{W11}
space via tangential relaxed slope}
We introduce here an auxiliary notion of \(W^{1,1}\) space, which is
intermediate between the approaches \(W^{1,1}(\R^d,\mu)\) and
\(W^{1,1}_{\rm Lip}(\R^d,\mu)\), as it is based upon the relaxation
of the modulus of the tangential gradient \(\nabla_\mu\). Its
equivalence with \(W^{1,1}(\R^d,\mu)\) will be proved in Theorem
\ref{thm:equiv_char_W11}. As a consequence of this characterization,
we will show in Proposition \ref{prop:Lipc_W1,1} that, as one might expect, the
compactly-supported Lipschitz functions belong to \(W^{1,1}(\R^d,\mu)\).
\begin{definition}[Tangential relaxed slope]\label{def:TRS}
Given any \(f\in L^1_\mu(\R^d)\), we say that \(G\in L^1_\mu(\R^d)\)
is a \emph{tangential relaxed slope} of \(f\) provided there exists
a sequence  \((f_n)_n\subseteq C^\infty_c(\R^d)\) converging to \(f\)
in \(L^1_\mu(\R^d)\) and such that \(|\nabla_\mu f_n|\rightharpoonup G'\)
weakly in \(L^1_\mu(\R^d)\), for some function \(G'\in L^1_\mu(\R^d)\)
with  \(G'\leq G\) \(\mu\)-a.e.\ in \(\R^d\). We denote by
\({\rm TRS}(f)\) the family of all tangential relaxed slopes of \(f\).
\end{definition}
\begin{lemma}
Let \(f\in L^1_\mu(\R^d)\) be such that \({\rm TRS}(f)\neq\emptyset\).
Then the set \({\rm TRS}(f)\) is a closed convex sublattice of
\(L^1_\mu(\R^d)\). In particular, it admits a unique \(\mu\)-a.e.\ minimal
element \(G_f\in{\rm TRS}(f)\), namely \(G_f\leq G\) holds \(\mu\)-a.e.\ for
every \(G\in{\rm TRS}(f)\).
\end{lemma}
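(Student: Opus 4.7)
My plan is to verify three structural properties of \({\rm TRS}(f)\)---convexity, closedness under \(L^1_\mu(\R^d)\)-convergence, and closure under pointwise minima---and then extract the minimum via a standard lattice argument. Closure under pointwise maxima is free from the definition: if \(G\in{\rm TRS}(f)\) is witnessed by \((f_n)_n\subseteq C^\infty_c(\R^d)\) with \(|\nabla_\mu f_n|\rightharpoonup G'\leq G\) and \(H\geq G\) pointwise, then the same sequence witnesses \(H\in{\rm TRS}(f)\). Throughout, I use that pointwise \(\mu\)-a.e.\ inequalities pass to weak \(L^1_\mu\)-limits (test against \(\chi_A\in L^\infty_\mu(\R^d)\) for Borel \(A\)). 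For \emph{convexity}, given \(G_1,G_2\in{\rm TRS}(f)\) with recovery sequences \((f_n^i)_n\subseteq C^\infty_c(\R^d)\), \(|\nabla_\mu f_n^i|\rightharpoonup G_i'\leq G_i\), and \(\lambda\in[0,1]\), the sequence \(h_n\coloneqq\lambda f_n^1+(1-\lambda)f_n^2\in C^\infty_c(\R^d)\) satisfies \(h_n\to f\) in \(L^1_\mu(\R^d)\) and \(|\nabla_\mu h_n|\leq\lambda|\nabla_\mu f_n^1|+(1-\lambda)|\nabla_\mu f_n^2|\), whose right-hand side is equi-integrable (as a weakly convergent sequence), so Proposition~\ref{prop:FA_facts} iii) extracts a weak subsequential limit \(H\leq\lambda G_1+(1-\lambda)G_2\).

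For \emph{closedness}, let \(G_n\to G\) in \(L^1_\mu(\R^d)\) with \(G_n\in{\rm TRS}(f)\). For each \(n\), Mazur's lemma (Proposition~\ref{prop:FA_facts} ii)) applied to the weak convergence \(|\nabla_\mu f_{n,k}|\rightharpoonup G_n'\leq G_n\) yields smooth compactly supported convex combinations \(g_{n,k}\in C^\infty_c(\R^d)\) whose tangential-gradient moduli are pointwise dominated by a sequence converging to \(G_n'\) strongly in \(L^1_\mu(\R^d)\). A diagonal choice \(k=k(n)\) then gives \(g_n\to f\) in \(L^1_\mu(\R^d)\) and \(|\nabla_\mu g_n|\leq h_n\) with \(\|h_n-G_n'\|_{L^1_\mu(\R^d)}\leq 1/n\). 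Since \(G_n'\leq G_n\to G\) in \(L^1_\mu(\R^d)\), the family \((G_n')_n\) is equi-integrable, hence so are \((h_n)_n\) and \((|\nabla_\mu g_n|)_n\); Proposition~\ref{prop:FA_facts} iii) extracts a subsequential weak limit \(H\leq G\), proving \(G\in{\rm TRS}(f)\).

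The \emph{sublattice property for \(\wedge\)} is the main obstacle. I first claim that \(\phi G_1+(1-\phi)G_2\in{\rm TRS}(f)\) for every \(\phi\in C^\infty_c(\R^d)\) with \(0\leq\phi\leq 1\). Indeed, setting \(h_n^\phi\coloneqq\phi f_n^1+(1-\phi)f_n^2\in C^\infty_c(\R^d)\) and applying \({\rm pr}_{T_\mu}\) to the classical Leibniz rule gives
\[
\nabla_\mu h_n^\phi=\phi\,\nabla_\mu f_n^1+(1-\phi)\,\nabla_\mu f_n^2+(f_n^1-f_n^2)\,\nabla_\mu\phi,
\]
whose last summand vanishes in \(L^1_\mu(\R^d;\R^d)\) because \(|\nabla_\mu\phi|\leq|\nabla\phi|\in L^\infty_\mu(\R^d)\) and \(f_n^1-f_n^2\to 0\) in \(L^1_\mu(\R^d)\), while the first two summands converge weakly in \(L^1_\mu(\R^d;\R^d)\) to \(\phi G_1'+(1-\phi)G_2'\); the convexity-style argument then bounds a weak subsequential limit of \(|\nabla_\mu h_n^\phi|\) by \(\phi G_1+(1-\phi)G_2\). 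Setting \(A\coloneqq\{G_1\leq G_2\}\), I next approximate \(\chi_A\) \(\mu\)-a.e.\ by a sequence \((\phi_k)_k\subseteq C^\infty_c(\R^d)\) with \(0\leq\phi_k\leq 1\) (using density of \(C^\infty_c(\R^d)\) in \(L^1_{\tilde\mu}(\R^d)\) for some finite Radon measure \(\tilde\mu\) equivalent to \(\mu\), followed by truncation to \([0,1]\) and mollification via Lemma~\ref{lemma:approx_lip_via_smooth}). Dominated convergence gives \(\phi_k G_1+(1-\phi_k)G_2\to G_1\wedge G_2\) strongly in \(L^1_\mu(\R^d)\), and the closedness already proven yields \(G_1\wedge G_2\in{\rm TRS}(f)\).

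For the \emph{minimum}, fix any \(G_0\in{\rm TRS}(f)\) and set \(g\coloneqq\inf\{\int_{\R^d}G\,\d\mu:G\in{\rm TRS}(f),\,G\leq G_0\}\in[0,\infty)\). For a minimizing sequence \((H_n)_n\) with \(H_n\leq G_0\), the decreasing replacements \(\tilde H_n\coloneqq H_1\wedge\cdots\wedge H_n\) lie in \({\rm TRS}(f)\) by the sublattice property and converge \(\mu\)-a.e.\ (hence, by dominated convergence, in \(L^1_\mu(\R^d)\)) to some \(G_f\leq G_0\); closedness then yields \(G_f\in{\rm TRS}(f)\) with \(\int G_f\,\d\mu=g\). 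For minimality, given any \(H\in{\rm TRS}(f)\) the element \(G_f\wedge H\wedge G_0\in{\rm TRS}(f)\) is \(\leq G_0\) with integral \(\geq g=\int G_f\,\d\mu\) and also \(\leq G_f\) pointwise, so it equals \(G_f\) \(\mu\)-a.e.; therefore \(G_f\leq H\) \(\mu\)-a.e., as required.
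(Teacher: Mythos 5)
Your proof is correct and follows the same overall skeleton as the paper's (closedness, convexity, lattice property, then extraction of the minimum), but several of your micro-arguments genuinely differ, mostly to your advantage. For convexity you combine the two recovery sequences directly and invoke equi-integrability of \(\lambda|\nabla_\mu f_n^1|+(1-\lambda)|\nabla_\mu f_n^2|\), whereas the paper first applies Mazur's lemma to make the gradient sequences \emph{dominated}, so that item iii) of Proposition \ref{prop:FA_facts} applies as literally stated; since that item is formulated for dominated sequences, your appeals to it for merely equi-integrable sequences are strictly appeals to the general Dunford--Pettis theorem (equi-integrability plus tightness, which your sequences do enjoy) --- a citation blemish, not a gap, recurring also in your closedness step. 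For the \(\wedge\)-closure, the paper approximates \(\chi_{\{G\le H\}}\) weakly\(^*\) in \(L^\infty_\mu(\R^d)\) and uses that \({\rm TRS}(f)\) is weakly closed (strongly closed and convex), while you approximate \(\chi_A\) \(\mu\)-a.e.\ and use strong closedness plus dominated convergence --- equally valid and more elementary. Notably, your explicit Leibniz identity \(\nabla_\mu h_n^\phi=\phi\,\nabla_\mu f_n^1+(1-\phi)\,\nabla_\mu f_n^2+(f_n^1-f_n^2)\,\nabla_\mu\phi\), with the cross term killed in \(L^1_\mu\) because \(f_n^1-f_n^2\to 0\), handles a point the paper glosses over: its displayed bound \(|\nabla_\mu h^j_n|\le\eta_j\sum_i\alpha^n_i|\nabla_\mu f_i|+(1-\eta_j)\sum_i\beta^n_i|\nabla_\mu g_i|\) silently drops exactly this cross term, which is only asymptotically negligible rather than pointwise absent, so your treatment is the more accurate one. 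Your closedness argument likewise avoids the paper's diagonalization over \emph{weak} convergences (which tacitly requires metrizability of the weak topology on weakly compact subsets of \(L^1_\mu(\R^d)\)) by Mazur-izing first and diagonalizing only strong convergences, and you spell out the minimizing-sequence construction of \(G_f\) that the paper leaves implicit; the truncation by \(G_0\) and the comparison via \(G_f\wedge H\wedge G_0\) are correct. One sentence to repair: where you say the first two summands ``converge weakly in \(L^1_\mu(\R^d;\R^d)\) to \(\phi G_1'+(1-\phi)G_2'\)'', vector fields cannot converge to a scalar; what you need (and in effect use) is the modulus bound \(|\nabla_\mu h_n^\phi|\le\phi|\nabla_\mu f_n^1|+(1-\phi)|\nabla_\mu f_n^2|+|f_n^1-f_n^2|\,|\nabla_\mu\phi|\), whose right-hand side converges weakly in \(L^1_\mu(\R^d)\) to \(\phi G_1'+(1-\phi)G_2'\).
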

\begin{proof}
\ \\
{\color{blue}\textsc{Closure.}} Let \((G_i)_i\subseteq{\rm TRS}(f)\)
satisfy \(G_i\to G\) strongly in \(L^1_\mu(\R^d)\) for some
\(G\in L^1_\mu(\R^d)\). We aim to show that \(G\in{\rm TRS}(f)\).
For any \(i\in\N\), we can find \(G'_i\leq G_i\) and
\((f^i_n)_n\subseteq C^\infty_c(\R^d)\) such that \(f^i_n\to f\)
strongly in \(L^1_\mu(\R^d)\) as \(n\to\infty\) and
\(|\nabla_\mu f^i_n|\rightharpoonup G'_i\) weakly in \(L^1_\mu(\R^d)\)
as \(n\to\infty\). Since \(G_i\to G\) in \(L^1_\mu(\R^d)\), we know
from Proposition \ref{prop:FA_facts} i) that (up to a not relabeled
subsequence) it holds \(G'_i\leq G_i\leq H\) \(\mu\)-a.e.\ for all
\(i\in\N\), for some \(H\in L^1_\mu(\R^d)\). Hence,
Proposition \ref{prop:FA_facts} iii) ensures that (up to a further
subsequence) it holds \(G'_i\rightharpoonup G'\) weakly in
\(L^1_\mu(\R^d)\), for some \(G'\in L^1_\mu(\R^d)\). Since
\(G'_i\leq G_i\) \(\mu\)-a.e.\ for all \(i\in\N\), we deduce that
\(G'\leq G\) \(\mu\)-a.e.. We now perform a diagonalization argument:
for any \(i\in\N\) we can find \(n(i)\in\N\) such that the elements
\(f_i\coloneqq f^i_{n(i)}\) satisfy \(f_i\to f\) strongly in
\(L^1_\mu(\R^d)\) and \(|\nabla_\mu f_i|\rightharpoonup G'\) weakly
in \(L^1_\mu(\R^d)\). This shows that \(G\in{\rm TRS}(f)\), as desired.\\
{\color{blue}\textsc{Convexity.}} Fix any \(G,H\in{\rm TRS}(f)\) and
\(\lambda\in[0,1]\). Then there exist \(G'\leq G\), \(H'\leq H\),
and \((f_n)_n,(g_n)_n\subseteq C^\infty_c(\R^d)\) such that
\(f_n\to f\), \(g_n\to f\), \(|\nabla_\mu f_n|\rightharpoonup G'\),
and \(|\nabla_\mu g_n|\rightharpoonup H'\) in \(L^1_\mu(\R^d)\).
Proposition \ref{prop:FA_facts} yields the existence of some coefficients
\((\alpha^n_i)_{i=n}^{N_n},(\beta^n_i)_{i=n}^{M_n}\subseteq[0,1]\) with
\(\sum_{i=n}^{N_n}\alpha^n_i=\sum_{i=n}^{M_n}\beta^n_i=1\) such that
\(\big(\sum_{i=n}^{N_n}\alpha^n_i|\nabla_\mu f_i|\big)_n\) and
\(\big(\sum_{i=n}^{M_n}\beta^n_i|\nabla_\mu g_i|\big)_n\) are dominated
and converge strongly in \(L^1_\mu(\R^d)\) to \(G'\) and \(H'\),
respectively. For any \(n\in\N\) we define
\[
h_n\coloneqq\lambda\sum_{i=n}^{N_n}\alpha^n_i f_i+(1-\lambda)
\sum_{i=n}^{M_n}\beta^n_i g_i\in C^\infty_c(\R^d).
\]
Observe that \(h_n\to f\) strongly in \(L^1_\mu(\R^d)\).
Moreover, from the inequality
\[
|\nabla_\mu h_n|\leq\lambda\sum_{i=n}^{N_n}\alpha^n_i|\nabla_\mu f_i|
+(1-\lambda)\sum_{i=n}^{M_n}\beta^n_i|\nabla_\mu g_i|
\]
we deduce that \(\big(|\nabla_\mu h_n|\big)_n\) is dominated,
thus (by Proposition \ref{prop:FA_facts} iii) and up to subsequence) it
holds \(|\nabla_\mu h_n|\rightharpoonup L'\) weakly in \(L^1_\mu(\R^d)\),
for some \(L'\leq\lambda G'+(1-\lambda)H'\leq\lambda G+(1-\lambda)H\).
This shows that \(\lambda G+(1-\lambda)H\in{\rm TRS}(f)\), thus proving
the convexity of the set \({\rm TRS}(f)\).\\
{\color{blue}\textsc{Lattice property.}} We aim to show that,
given any \(G,H\in{\rm TRS}(f)\), it holds \(G\vee H\in{\rm TRS}(f)\)
and \(G\wedge H\in{\rm TRS}(f)\). The former is trivial, so let us
focus on the latter. Define \(E\coloneqq\{G\leq H\}\). By convolution,
we can find a sequence \((\eta_j)_j\subseteq C^\infty_c(\R^d)\) with
\(0\leq\eta_j\leq 1\) that weakly\(^*\) converges to \(\chi_E\) in
\(L^\infty_\mu(\R^d)\). In particular, \(\eta_j G+(1-\eta_j)H
\rightharpoonup\chi_E G+\chi_{E^c}H=G\wedge H\) weakly in
\(L^1_\mu(\R^d)\). The set \({\rm TRS}(f)\subseteq L^1_\mu(\R^d)\)
is weakly closed (as it is strongly closed and convex), thus in order
to prove that \(G\wedge H\in{\rm TRS}(f)\) it suffices to show
that \(\eta_j G+(1-\eta_j)H\in{\rm TRS}(f)\) for all \(j\in\N\).
To this aim, pick \(G'\leq G\), \(H'\leq H\), and
\((f_n)_n,(g_n)_n\subseteq C^\infty_c(\R^d)\) such that
\(f_n\to f\), \(g_n\to f\), \(|\nabla_\mu f_n|\rightharpoonup G'\),
and \(|\nabla_\mu g_n|\rightharpoonup H'\) in \(L^1_\mu(\R^d)\).
Thanks to Proposition \ref{prop:FA_facts}, for any \(n\in\N\)
we can find coefficients
\((\alpha^n_i)_{i=n}^{N_n},(\beta^n_i)_{i=n}^{M_n}\subseteq[0,1]\) with
\(\sum_{i=n}^{N_n}\alpha^n_i=\sum_{i=n}^{M_n}\beta^n_i=1\) such that
the sequences \(\big(\sum_{i=n}^{N_n}\alpha^n_i|\nabla_\mu f_i|\big)_n\)
and \(\big(\sum_{i=n}^{M_n}\beta^n_i|\nabla_\mu g_i|\big)_n\) are
dominated and converge strongly in \(L^1_\mu(\R^d)\) to \(G'\) and
\(H'\), respectively. Now fix \(j\in\N\) and define
\(h^j_n\coloneqq\eta_j\sum_{i=n}^{N_n}f_i+(1-\eta_j)
\sum_{i=n}^{M_n}g_i\in C^\infty_c(\R^d)\) for every \(n\in\N\).
Observe that \(h^j_n\to f\) strongly in \(L^1_\mu(\R^d)\)
as \(n\to\infty\). For any \(n\in\N\) we have that
\[
|\nabla_\mu h^j_n|\leq\eta_j\sum_{i=n}^{N_n}\alpha^n_i|\nabla_\mu f_i|
+(1-\eta_j)\sum_{i=n}^{M_n}\beta^n_i|\nabla_\mu g_i|,
\]
thus in particular \(\big(|\nabla_\mu h^j_n|\big)_n\) is dominated.
Therefore, Proposition \ref{prop:FA_facts} iii) ensures that (up to
a not relabeled subsequence in \(n\)) it holds \(|\nabla_\mu h^j_n|
\rightharpoonup L'\) weakly in \(L^1_\mu(\R^d)\) as \(n\to\infty\), for
some function \(L'\leq\eta_j G'+(1-\eta_j)H'\leq\eta_j G+(1-\eta_j)H\).
This yields \(\eta_j G+(1-\eta_j)H\in{\rm TRS}(f)\).
\end{proof}
\begin{theorem}\label{thm:equiv_char_W11}
Let \(f\in L^1_\mu(\R^d)\) be given. Then \(f\in W^{1,1}(\R^d,\mu)\) if
and only if \({\rm TRS}(f)\neq \emptyset\). In this case, the function
\(|\nabla_\mu f|\) coincides with the \(\mu\)-a.e.\ minimal element
of \({\rm TRS}(f)\).
\end{theorem}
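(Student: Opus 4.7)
The plan is to prove both directions and the identification of the minimal element. For the easy direction ($W^{1,1}(\R^d,\mu)\subseteq\{f:{\rm TRS}(f)\neq\emptyset\}$), I would pick $f\in W^{1,1}(\R^d,\mu)$ and use the definition of the Sobolev space to find $(f_n)_n\subseteq C^\infty_c(\R^d)$ with $f_n\to f$ in $L^1_\mu(\R^d)$ and $\nabla_\mu f_n\to\nabla_\mu f$ in $\Gamma^1_\mu(T_\mu)$. Then $|\nabla_\mu f_n|\to|\nabla_\mu f|$ strongly (hence weakly) in $L^1_\mu(\R^d)$, witnessing $|\nabla_\mu f|\in{\rm TRS}(f)$, and consequently $G_f\leq|\nabla_\mu f|$ by minimality.

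For the nontrivial reverse direction, assume ${\rm TRS}(f)\neq\emptyset$, take the minimal element $G_f$, and fix a realizing sequence $(f_n)_n\subseteq C^\infty_c(\R^d)$ with $f_n\to f$ in $L^1_\mu(\R^d)$ and $|\nabla_\mu f_n|\rightharpoonup G'$ for some $G'\leq G_f$. The essential obstacle is to upgrade the scalar weak convergence of the moduli $|\nabla_\mu f_n|$ to a weak convergence of the vector fields $\nabla_\mu f_n$ themselves in $\Gamma^1_\mu(T_\mu)$, so that Proposition \ref{prop:ext_tg_grad} can close the argument. My plan to overcome this is to apply Mazur's lemma (Proposition \ref{prop:FA_facts}\,ii) jointly in $L^1_\mu(\R^d)\times L^1_\mu(\R^d)$ to the weakly convergent pair $(f_n,|\nabla_\mu f_n|)\rightharpoonup(f,G')$, obtaining convex combinations $g_n=\sum_{i=n}^{N_n}\alpha^n_i f_i\in C^\infty_c(\R^d)$ with $g_n\to f$ and $\sum_{i=n}^{N_n}\alpha^n_i|\nabla_\mu f_i|\to G'$ strongly in $L^1_\mu(\R^d)$. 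A subsequence-and-domination step via Proposition \ref{prop:FA_facts}\,i then supplies $H\in L^1_\mu(\R^d)$ with $|\nabla_\mu g_n|\leq\sum_{i=n}^{N_n}\alpha^n_i|\nabla_\mu f_i|\leq H$ $\mu$-a.e., using the sub-additivity of the norm under the convex combination.

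With the sequence $(\nabla_\mu g_n)_n$ now dominated in $L^1_\mu(\R^d;\R^d)$, I would invoke Proposition \ref{prop:FA_facts}\,iii to extract a (not relabeled) subsequence along which $\nabla_\mu g_n\rightharpoonup v$ weakly in $L^1_\mu(\R^d;\R^d)$. Since $\Gamma^1_\mu(T_\mu)$ is a closed (hence weakly closed) subspace of $L^1_\mu(\R^d;\R^d)$, the limit $v$ lies in $\Gamma^1_\mu(T_\mu)$. Feeding the pair $g_n\rightharpoonup f$ in $L^1_\mu(\R^d)$ and $\nabla_\mu g_n\rightharpoonup v$ in $\Gamma^1_\mu(T_\mu)$ into the weak-weak closedness assertion of Proposition \ref{prop:ext_tg_grad} yields $f\in W^{1,1}(\R^d,\mu)$ with $\nabla_\mu f=v$.

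It remains to pin down $|\nabla_\mu f|=G_f$. For every Borel set $E\subseteq\R^d$, weak lower semicontinuity of the convex continuous functional $w\mapsto\int_E|w|\,\d\mu$ on $L^1_\mu(\R^d;\R^d)$ gives
\[
\int_E|\nabla_\mu f|\,\d\mu=\int_E|v|\,\d\mu\leq\liminf_{n\to\infty}\int_E|\nabla_\mu g_n|\,\d\mu\leq\lim_{n\to\infty}\int_E\sum_{i=n}^{N_n}\alpha^n_i|\nabla_\mu f_i|\,\d\mu=\int_E G'\,\d\mu,
\]
where the last equality uses the strong $L^1_\mu(\R^d)$-convergence produced in the Mazur step. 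By the arbitrariness of $E$ we get $|\nabla_\mu f|\leq G'\leq G_f$ $\mu$-a.e., so in particular $|\nabla_\mu f|\in{\rm TRS}(f)$. Combined with the reverse inequality $G_f\leq|\nabla_\mu f|$ from the forward direction, we conclude $G_f=|\nabla_\mu f|$ $\mu$-a.e. The entire argument stays within the functional-analytic toolkit already assembled in the paper (Proposition \ref{prop:FA_facts} and Proposition \ref{prop:ext_tg_grad}); the only delicate point is the simultaneous Mazur passage that converts the scalar-level weak information into a dominated vector-valued sequence.
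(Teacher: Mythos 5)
Your proposal is correct and follows essentially the same route as the paper: the easy direction is identical, and for the converse both arguments use the same Mazur--domination--weak-compactness scheme (Proposition \ref{prop:FA_facts} ii), i), iii)) on convex combinations \(g_n=\sum_{i=n}^{N_n}\alpha^n_i f_i\), concluding via the weak--weak closedness of \(\nabla_\mu\) from Proposition \ref{prop:ext_tg_grad}. The only cosmetic difference is the final comparison \(|\nabla_\mu f|\leq G'\): you obtain it from weak lower semicontinuity of \(w\mapsto\int_E|w|\,\d\mu\) over arbitrary Borel sets \(E\subseteq\R^d\), whereas the paper tests \(\nabla_\mu f\) against fixed unit vectors \(v\in\R^d\) with weights \(0\leq h\in L^\infty_\mu(\R^d)\) and then takes a supremum over directions --- both steps are equally valid and of the same depth.
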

\begin{proof}
First, we aim to show that if \(f\in W^{1,1}(\R^d,\mu)\),
then \({\rm TRS}(f)\neq\emptyset\) and \(G_f\leq|\nabla_\mu f|\)
\(\mu\)-a.e., where \(G_f\) stands for the minimal element of
\({\rm TRS}(f)\). Pick \((f_n)_n\subseteq C^\infty_c(\R^d)\) such that
\(f_n\to f\) and \(\nabla_\mu f_n\to\nabla_\mu f\) strongly in
\(L^1_\mu(\R^d)\) and \(\Gamma^1_\mu(T_\mu)\), respectively. In particular,
 \(|\nabla_\mu f_n|\to|\nabla_\mu f|\) strongly in
\(L^1_\mu(\R^d)\), whence it follows that \(|\nabla_\mu f|\in{\rm TRS}(f)\),
thus \(G_f\leq|\nabla_\mu f|\) in the \(\mu\)-a.e.\ sense.

Conversely, let us show that if \({\rm TRS}(f)\neq\emptyset\),
then \(f\in W^{1,1}(\R^d,\mu)\) and \(|\nabla_\mu f|\leq G\)
\(\mu\)-a.e.\ for every \(G\in{\rm TRS}(f)\). There exist
\(G'\leq G\) and \((f_n)_n\subseteq C^\infty_c(\R^d)\) such
that \(f_n\to f\) and \(|\nabla_\mu f_n|\rightharpoonup G'\)
in \(L^1_\mu(\R^d)\). Proposition \ref{prop:FA_facts} yields the
existence of \((\alpha^n_i)_{i=n}^{N_n}\subseteq[0,1]\) with
\(\sum_{i=n}^{N_n}\alpha^n_i=1\) such that the sequence
\(\big(\sum_{i=n}^{N_n}\alpha^n_i|\nabla_\mu f_i|\big)_n\) is
dominated and converges (both strongly in \(L^1_\mu(\R^d)\) and in
the pointwise \(\mu\)-a.e.\ sense) to \(G'\) as \(n\to\infty\). Define
\[
g_n\coloneqq\sum_{i=n}^{N_n}\alpha^n_i f_i\in C^\infty_c(\R^d),
\quad\text{ for every }n\in\N.
\]
Then \(g_n\to f\) strongly in \(L^1_\mu(\R^d)\). Moreover, from
the inequality \(|\nabla_\mu g_n|\leq\sum_{i=n}^{N_n}\alpha^n_i
|\nabla_\mu f_i|\) we deduce that the sequence \((\nabla_\mu g_n)_n\)
is dominated. Hence, by applying Proposition \ref{prop:FA_facts} iii)
we obtain that there exists a vector field \(v\in\Gamma^1_\mu(T_\mu)\)
such that (up to a subsequence in \(n\)) it holds
\(\nabla_\mu g_n\rightharpoonup v\) weakly in \(\Gamma^1_\mu(T_\mu)\).
Lemma \ref{lem:grad_mu_closed} ensures that \(f\in W^{1,1}(\R^d,\mu)\)
and \(v=\nabla_\mu f\). Finally,
let us show that \(|\nabla_\mu f|\leq G\) \(\mu\)-a.e.\ in \(\R^d\).
Given any \(v\in \R^d\) with \(|v|\leq 1\) and any 
\(0\leq h\in L^\infty_\mu(\R^d)\) it holds that 
\[
\int_{\R^d}h\, v\cdot \nabla_\mu f\,\d\mu
=\lim_{n\to \infty}\int_{\R^d}h\,v\cdot \nabla_\mu g_n\,\d\mu
\leq \lim_{n\to \infty}\int_{\R^d}h|\nabla_\mu g_n|\,\d\mu
\leq\int_{\R^n}hG'\,\d\mu\leq \int_{\R^d}hG\,\d\mu.
\] 
By the arbitrariness of \(h\), we deduce that 
\(v\cdot \nabla_\mu f\leq G\) holds \(\mu\)-a.e.\ in \(\R^d\). Then, 
we conclude that 
\(
|\nabla_\mu f|(x)=
\sup\{ v\cdot \nabla_\mu f(x):\, v\in \R^d,\, |v|\leq 1\}\leq G
\) for \(\mu\)-a.e.\  \(x\in\R^d\).
Therefore, the statement is achieved.
\end{proof}
\begin{proposition}\label{prop:Lipc_W1,1}
Let \(f\in \LIP_c(\R^d)\). Then \(f\in W^{1,1}(\R^d,\mu)\) and 
\(|\nabla_\mu f|\leq {\rm lip}_a(f)\) holds \(\mu\)-a.e..
\end{proposition}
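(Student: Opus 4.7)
The plan is to exhibit $\lip_a(f)$ as an element of the tangential relaxed slope set ${\rm TRS}(f)$, whence Theorem \ref{thm:equiv_char_W11} will immediately give both $f\in W^{1,1}(\R^d,\mu)$ and $|\nabla_\mu f|\leq\lip_a(f)$ $\mu$-a.e.

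First I would produce a smooth approximation by mollification. Fix a sequence $\varepsilon_n\searrow 0$ and set $f_n\coloneqq\rho_{\varepsilon_n}*f\in C^\infty_c(\R^d)$. Lemma \ref{lemma:approx_lip_via_smooth} yields $f_n\to f$ strongly in $L^1_\mu(\R^d)$, uniform support containment $\mathrm{supp}(f_n)\subseteq K$ for $K$ a fixed compact neighbourhood of $\mathrm{supp}(f)$, and the crucial pointwise bound $|\nabla f_n(x)|\leq\mathrm{Lip}(f;B_{2\varepsilon_n}(x))$. Since the orthogonal projection ${\rm pr}_{T_\mu(x)}$ is $1$-Lipschitz, the definition \eqref{eq:tangential_grad_smooth} of $\nabla_\mu$ gives
\[
|\nabla_\mu f_n(x)|\leq|\nabla f_n(x)|\leq\mathrm{Lip}(f;B_{2\varepsilon_n}(x))\leq\mathrm{Lip}(f)\chi_K(x)\quad\text{for }\mu\text{-a.e.\ }x\in\R^d.
\]
In particular the sequence $(|\nabla_\mu f_n|)_n$ is dominated by $\mathrm{Lip}(f)\chi_K\in L^1_\mu(\R^d)$, so Proposition \ref{prop:FA_facts}(iii) provides a (not relabeled) subsequence with $|\nabla_\mu f_n|\rightharpoonup G'$ weakly in $L^1_\mu(\R^d)$ for some $G'\in L^1_\mu(\R^d)$.

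The remaining step—the only one that requires a small argument—is to show $G'\leq\lip_a(f)$ $\mu$-a.e. Because $r\mapsto\mathrm{Lip}(f;B_r(x))$ is non-increasing and converges to $\lip_a(f)(x)$ as $r\searrow 0$ by \eqref{eq:lip_a}, the bound $\mathrm{Lip}(f;B_{2\varepsilon_n}(\cdot))\leq\mathrm{Lip}(f)\chi_K$ together with dominated convergence gives $\mathrm{Lip}(f;B_{2\varepsilon_n}(\cdot))\to\lip_a(f)$ strongly in $L^1_\mu(\R^d)$. Testing the inequality $|\nabla_\mu f_n|\leq\mathrm{Lip}(f;B_{2\varepsilon_n}(\cdot))$ against $\chi_A$ for any Borel $A\subseteq\R^d$ with $\mu(A)<\infty$ (so that $\chi_A\in L^\infty_\mu(\R^d)$), and passing to the limit using weak convergence on the left-hand side and strong convergence on the right-hand side, yields $\int_A G'\,\d\mu\leq\int_A\lip_a(f)\,\d\mu$; by the arbitrariness of $A$ this gives $G'\leq\lip_a(f)$ $\mu$-a.e. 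Therefore $\lip_a(f)\in{\rm TRS}(f)$, and Theorem \ref{thm:equiv_char_W11} concludes the proof.
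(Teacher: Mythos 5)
Your proposal is correct and follows essentially the same route as the paper: mollify to get \((f_n)_n\subseteq C^\infty_c(\R^d)\) with \(|\nabla_\mu f_n|\leq{\rm Lip}\big(f;B_{2\varepsilon_n}(\cdot)\big)\leq{\rm Lip}(f)\chi_K\), extract a weak \(L^1_\mu\)-limit \(G'\) by domination, show \(G'\leq\lip_a(f)\) using the strong \(L^1_\mu\)-convergence \({\rm Lip}\big(f;B_r(\cdot)\big)\to\lip_a(f)\) (which is exactly the paper's Lemma \ref{lem:conv_to_lip_a}, re-derived inline by you), and conclude via \({\rm TRS}(f)\) and Theorem \ref{thm:equiv_char_W11}. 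The only cosmetic slip is that \(r\mapsto{\rm Lip}\big(f;B_r(x)\big)\) is non-decreasing in \(r\) (hence non-increasing along \(r\searrow 0\)), and monotonicity is not even needed since dominated convergence alone suffices, as in the paper.
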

\begin{proof}
Denote by \(K\) the closed \(1\)-neighbourhood of \({\rm supp}(f)\).
Fix a sequence \((\varepsilon_n)_n\subseteq(0,1)\) such that
\(\varepsilon_n\to 0\) and define \(f_n\coloneqq\rho_{\varepsilon_n}*f
\in C^\infty_c(\R^d)\) for every \(n\in\N\). Notice that each
\({\rm supp}(f_n)\) is contained in \(K\). Thanks to
\eqref{eq:approx_lip_via_smooth_1}, we see that
\(|f_n|\leq\|f\|_{L^\infty_\mu(\R^d)}\chi_K\) and \(f_n(x)\to f(x)\)
for all \(x\in\R^d\), 
thus by using the dominated convergence theorem we obtain that
\(f_n\to f\) in \(L^1_\mu(\R^d)\). Moreover,
\[
|\nabla_\mu f_n|\leq|\nabla f_n|
\overset{\eqref{eq:approx_lip_via_smooth_2}}\leq
{\rm Lip}\big(f;B_{\varepsilon_n}(\cdot)\big)\leq
{\rm Lip}(f)\chi_K\in L^1_\mu(\R^d),\quad\text{ for every }n\in\N,
\]
thus in particular \(\big(|\nabla_\mu f_n|\big)_n\) is dominated.
By Proposition \ref{prop:FA_facts} iii) we get the existence of
a function \(G\in L^1_\mu(\R^d)\) such that (up to a subsequence) it
holds \(|\nabla_\mu f_n|\rightharpoonup G\) weakly in \(L^1_\mu(\R^d)\).
Hence, we conclude that for every \(0\leq h\in L^\infty_\mu(\R^d)\)
it holds that
\begin{equation}\label{eq:aux}
\begin{split}
\int_{\R^d}hG\,\d\mu
=\lim_{n\to\infty}\int_{\R^d}h|\nabla_\mu f_n|\,\d\mu
\overset{\phantom{\overset{\eqref{eq:approx_lip_via_smooth_2}}
\leq}}\leq &\lim_{n\to\infty}\int_{\R^d}h|\nabla f_n|\,\d\mu\\
\overset{\eqref{eq:approx_lip_via_smooth_2}}
\leq &\lim_{n\to\infty}\int_{\R^d}h\,{\rm Lip}\big(f;B_{\varepsilon_n}
(\cdot)\big)\,\d\mu
=\int_{\R^d}h\,\lip_a(f)\,\d\mu,
\end{split}
\end{equation}
where, in order to get the last equality above, we have used a simple 
technical result given in Lemma \ref{lem:conv_to_lip_a} below.
It follows from \eqref{eq:aux}
that \(G\leq {\rm lip}_a(f)\) holds \(\mu\)-a.e.\ and thus 
that \(\lip_a(f)\in{\rm TRS}(f)\). Accordingly,
\(f\in W^{1,1}(\R^d,\mu)\) and \(|\nabla_\mu f|\leq\lip_a(f)\)
holds \(\mu\)-a.e.\ by Theorem \ref{thm:equiv_char_W11}. The
proof is complete.
\end{proof}
In the proof of the lemma above  the following
easy technical lemma has been used. It will be useful also later on, in the 
proof of 
Theorem \ref{thm:relation_W11}.

\begin{lemma}\label{lem:conv_to_lip_a}
Let \(f\in\LIP_c(\R^d)\) be given. Then it holds that
\begin{equation}\label{eq:conv_to_lip_a}
{\rm Lip}\big(f;B_r(\cdot)\big)\to\lip_a(f),
\quad\text{ strongly in }L^1_\mu(\R^d)\text{ as }r\searrow 0.
\end{equation}
\end{lemma}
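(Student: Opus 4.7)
The plan is to invoke the dominated convergence theorem, so two ingredients are needed: pointwise convergence and a \(\mu\)-integrable majorant that does not depend on \(r\). The pointwise convergence is immediate from the definition \eqref{eq:lip_a}: the map \(r \mapsto {\rm Lip}(f; B_r(x))\) is monotone non-decreasing in \(r\), so its limit as \(r \searrow 0\) equals \(\inf_{r>0} {\rm Lip}(f; B_r(x))\), which coincides with \(\lip_a(f)(x)\) at every \(x \in \R^d\) (every point of \(\R^d\) being an accumulation point, the secondary branch of the definition never applies).

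For the majorant, I exploit the compact support: set \(S \coloneqq {\rm supp}(f)\) and let \(K\) denote the closed \(1\)-neighborhood of \(S\), which is compact and hence satisfies \(\mu(K) < \infty\). I claim that the \(L^1_\mu\)-function \(h \coloneqq {\rm Lip}(f)\,\chi_K\) dominates \({\rm Lip}(f; B_r(\cdot))\) for every \(r \in (0,1]\). Indeed, for \(x \in K\) the global Lipschitz constant trivially bounds the local one, while for \(x \notin K\) one has \({\rm dist}(x, S) > 1 \geq r\), so \(B_r(x)\) is disjoint from \(S\), the function \(f\) vanishes identically on \(B_r(x)\), and consequently \({\rm Lip}(f; B_r(x)) = 0\).

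The one remaining technical point is the Borel measurability of \(x \mapsto {\rm Lip}(f; B_r(x))\), which I settle using the continuity of \(f\) by writing
\[
{\rm Lip}\bigl(f; B_r(x)\bigr) = \sup_{\substack{y, z \in \mathbb{Q}^d \\ y \neq z}} \frac{|f(y) - f(z)|}{|y - z|}\,\chi_{B_r(y) \cap B_r(z)}(x),
\]
which is a countable supremum of Borel functions in \(x\). The conclusion \eqref{eq:conv_to_lip_a} then follows at once by applying the dominated convergence theorem along any sequence \(r_n \searrow 0\). I do not anticipate a serious obstacle here: the proof is essentially a mechanical DCT argument, the only non-trivial ingredient being the compact-support bound that produces the dominating function.
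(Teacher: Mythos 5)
Your proof is correct and takes essentially the same route as the paper's: both obtain the pointwise convergence \({\rm Lip}\big(f;B_r(x)\big)\to\lip_a(f)(x)\) directly from the definition \eqref{eq:lip_a}, dominate \({\rm Lip}\big(f;B_r(\cdot)\big)\) for small \(r\) by \({\rm Lip}(f)\chi_K\in L^1_\mu(\R^d)\) with \(K\) the closed \(1\)-neighbourhood of \({\rm supp}(f)\), and conclude by dominated convergence. Your explicit justifications of the monotonicity in \(r\), of the vanishing of the local Lipschitz constant off \(K\), and of the Borel measurability of \(x\mapsto{\rm Lip}\big(f;B_r(x)\big)\) via a countable supremum over rational pairs merely fill in details the paper leaves implicit.
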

\begin{proof}
Call \(K\) the closed \(1\)-neighbourhood of \({\rm supp}(f)\).
Observe that for any \(r\in(0,1)\) we have that
\({\rm Lip}\big(f;B_r(\cdot)\big)\leq{\rm Lip}(f)\chi_K\in L^1_\mu(\R^d)\)
on \(\R^d\). Recalling that
\(\lim_{r\searrow 0}{\rm Lip}\big(f;B_r(x)\big)=\lip_a(f)(x)\)
for every \(x\in\R^d\) by the very definition of \({\rm lip}_a(f)\),
by applying the dominated convergence theorem we conclude that
\eqref{eq:conv_to_lip_a} is verified, as desired.
\end{proof}
\section{Relation between BV and \texorpdfstring{\(W^{1,1}\)}{W11} spaces}
\subsection{The total variation measure}\label{ssec:total_var_meas}
Given that the relaxation-type approach to the BV space presented in 
Subsection \ref{ssec:BV_relaxed} comes from the general metric measure space setting, 
one has to use Lipschitz functions in the relaxation process. When we stick to 
the specific case of the weighted Euclidean space, one would expect 
that Lipschitz functions can be replaced by smooth ones. Indeed,
in the next results we confirm it. In this subsection we will use the notation 
\(\LIP_{bs}(\X)\) to denote the space of boundedly-supported Lipschitz functions on 
a given metric space \((\X, \sf d)\). We start with two preparatory lemmata:
\begin{lemma}\label{lem:localize_tv}
Let \(f\in{\rm BV}_{\rm Lip}(\R^d,\mu)\) be given. Let \(\Omega\subseteq\R^d\)
be an open set with \(|D_\mu f|_{\rm Lip}(\partial\Omega)=0\). Then
\(f|_{\bar\Omega}\in{\rm BV}_{\rm Lip}(\bar\Omega,\mu|_{\bar\Omega})\)
and \(\big|D_{\mu|_{\bar\Omega}}(f|_{\bar\Omega})\big|_
{\rm Lip}(\bar\Omega)=|D_\mu f|_{\rm Lip}(\Omega)\).
\end{lemma}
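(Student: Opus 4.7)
The plan is to prove both inequalities separately and combine them. A key preliminary observation will drive the argument: for every $x\in \Omega$ and every function $h$ that is locally Lipschitz on some neighbourhood of $x$, sufficiently small balls around $x$ are contained in $\Omega$, so that $\lip_a(h)(x)$ computed in $\R^d$ and in $\bar\Omega$ coincide.

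For the easier direction $|D_{\mu|_{\bar\Omega}}(f|_{\bar\Omega})|_{\rm Lip}(\bar\Omega)\geq |D_\mu f|_{\rm Lip}(\Omega)$, I would apply the analogue of \eqref{eq:tot_variation_local_mms} on the metric measure space $(\bar\Omega,\sfd_{\rm Eucl},\mu|_{\bar\Omega})$ to pick a sequence $(h_n)\subseteq\LIP_{loc}(\bar\Omega)\cap L^1_{\mu|_{\bar\Omega}}(\bar\Omega)$ converging to $f|_{\bar\Omega}$ in $L^1_{\mu|_{\bar\Omega}}(\bar\Omega)$ and with $\liminf_n\int_{\bar\Omega}\lip_a^{\bar\Omega}(h_n)\,\d\mu|_{\bar\Omega}=|D_{\mu|_{\bar\Omega}}(f|_{\bar\Omega})|_{\rm Lip}(\bar\Omega)$. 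The restrictions $h_n|_\Omega$ lie in $\LIP_{loc}(\Omega)\cap L^1_\mu(\Omega)$, converge to $f|_\Omega$ in $L^1_\mu(\Omega)$, and, by the preliminary observation, satisfy $\lip_a(h_n|_\Omega)(x)=\lip_a^{\bar\Omega}(h_n)(x)$ for every $x\in\Omega$. Plugging this sequence into \eqref{eq:tot_variation_local_mms} yields the claimed inequality.

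For the reverse inequality (which will also establish the BV membership of $f|_{\bar\Omega}$), a direct restriction argument is inadequate because a locally Lipschitz function on $\Omega$ can blow up near $\partial\Omega$; instead I would enlarge $\bar\Omega$ slightly. For each $\varepsilon>0$, set $U_\varepsilon:=\{x\in\R^d:\sfd_{\rm Eucl}(x,\bar\Omega)<\varepsilon\}$, an open set strictly containing $\bar\Omega$. By \eqref{eq:tot_variation_local_mms} applied on $U_\varepsilon$, choose $(f_n^\varepsilon)\subseteq\LIP_{loc}(U_\varepsilon)\cap L^1_\mu(U_\varepsilon)$ with $f_n^\varepsilon\to f$ in $L^1_\mu(U_\varepsilon)$ and $\liminf_n\int_{U_\varepsilon}\lip_a(f_n^\varepsilon)\,\d\mu\leq |D_\mu f|_{\rm Lip}(U_\varepsilon)+\varepsilon$. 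Since $\bar\Omega\subseteq U_\varepsilon$ and $U_\varepsilon$ is open, the restrictions $f_n^\varepsilon|_{\bar\Omega}$ are locally Lipschitz on $\bar\Omega$, converge to $f|_{\bar\Omega}$ in $L^1_{\mu|_{\bar\Omega}}(\bar\Omega)$, and satisfy $\lip_a^{\bar\Omega}(f_n^\varepsilon|_{\bar\Omega})(x)\leq \lip_a(f_n^\varepsilon)(x)$ on $\bar\Omega$ (for small $r$ one has $B_r(x)\cap\bar\Omega\subseteq B_r(x)\subseteq U_\varepsilon$, and restricting a set can only decrease its local Lipschitz constant). This yields $f|_{\bar\Omega}\in {\rm BV}_{\rm Lip}(\bar\Omega,\mu|_{\bar\Omega})$ together with the estimate $|D_{\mu|_{\bar\Omega}}(f|_{\bar\Omega})|_{\rm Lip}(\bar\Omega)\leq |D_\mu f|_{\rm Lip}(U_\varepsilon)+\varepsilon$.

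To conclude, since $|D_\mu f|_{\rm Lip}$ is a finite Borel measure on $\R^d$ and $U_\varepsilon\searrow\bar\Omega$ as $\varepsilon\downarrow 0$, continuity from above gives $|D_\mu f|_{\rm Lip}(U_\varepsilon)\to|D_\mu f|_{\rm Lip}(\bar\Omega)$, and the hypothesis $|D_\mu f|_{\rm Lip}(\partial\Omega)=0$ entails $|D_\mu f|_{\rm Lip}(\bar\Omega)=|D_\mu f|_{\rm Lip}(\Omega)$. Letting $\varepsilon\downarrow 0$ in the preceding estimate closes the remaining inequality. The main obstacle is precisely this $\leq$ direction: both the shrinking neighbourhood trick and the hypothesis on $\partial\Omega$ are essential to avoid having to extend locally Lipschitz approximants of $f|_\Omega$ across the boundary.
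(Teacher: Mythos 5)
Your proof is correct and takes essentially the same route as the paper's: both inequalities are obtained by restricting near-optimal approximating sequences, using the hypothesis \(|D_\mu f|_{\rm Lip}(\partial\Omega)=0\) together with outer regularity of the total variation measure to replace \(|D_\mu f|_{\rm Lip}(\Omega)\) by the values on open neighbourhoods of \(\bar\Omega\) in the \(\leq\) direction, and restricting good competitors from \(\bar\Omega\) to \(\Omega\) in the \(\geq\) direction. The only cosmetic differences are that the paper uses the definitional infimum \(|D_\mu f|_{\rm Lip}(\bar\Omega)=\inf_U|D_\mu f|_{\rm Lip}(U)\) over open \(U\supseteq\bar\Omega\) where you use the specific neighbourhoods \(U_\varepsilon\) plus continuity from above of the finite measure, and that the paper takes boundedly-supported Lipschitz competitors on \(\bar\Omega\) (via the analogue of \eqref{eq:tot_variation_mass_mms}) where you take locally Lipschitz ones via \eqref{eq:tot_variation_local_mms}; both variants are valid.
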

\begin{proof}
For brevity, call \(C\coloneqq\bar\Omega\) and \(g\coloneqq f|_C\).
Given that
\[
|D_\mu f|_{\rm Lip}(\Omega)=|D_\mu f|(C)=\inf_U|D_\mu f|_{\rm Lip}(U),\]
where the infimum is among all open sets \(U\subseteq\R^d\) containing \(C\),
for any \(\varepsilon>0\) we can find an open set \(U\subseteq\R^d\)
with \(C\subseteq U\) and a sequence
\((f_n)_n\subseteq\LIP_{loc}(U)\cap L^1_\mu(U)\)
such that \(f_n\to f|_U\) in \(L^1_\mu(U)\) and
\(\limi_n\int_U\lip_a(f_n)\,\d\mu\leq|D_\mu f|_{\rm Lip}(\Omega)+\varepsilon\).
Then \(g_n\coloneqq f_n|_C\in\LIP_{loc}(C)\cap L^1_\mu(C)\)
satisfies \(g_n\to g\) and
\(\limi_n\int_C\lip_a(g_n)\,\d\mu\leq\limi_n\int_U\lip_a(f_n)\,\d\mu
\leq|D_\mu f|_{\rm Lip}(\Omega)+\varepsilon\). Hence, we have
\(g\in{\rm BV}_{\rm Lip}(C,\mu|_C)\)
and \(|D_{\mu|_C}g|_{\rm Lip}(C)\leq|D_\mu f|_{\rm Lip}(\Omega)\). Conversely, 
we can find a sequence
\((g'_n)_n\subseteq\LIP_{bs}(C)\) such that \(g'_n\to g\) in \(L^1_\mu(C)\)
and \(\int_C\lip_a(g'_n)\,\d\mu\to|D_{\mu|_C}g|_{\rm Lip}(C)\), thus
the functions \(f'_n\coloneqq g'_n|_\Omega\in\LIP_{loc}(\Omega)\cap
L^1_\mu(\Omega)\) satisfy \(f'_n\to f|_\Omega\) in \(L^1_\mu(\Omega)\)
and
\[
\limi_{n\to\infty}\int_\Omega\lip_a(f'_n)\,\d\mu\leq
\lim_{n\to\infty}\int_C\lip_a(g'_n)\,\d\mu=|D_{\mu|_C}g|_{\rm Lip}(C),
\]
whence it follows that
\(|D_\mu f|_{\rm Lip}(\Omega)\leq|D_{\mu|_C}g|_{\rm Lip}(C)\).
Therefore, the statement is achieved.
\end{proof}
\begin{lemma}\label{lem:good_open_sets}
Let \(f\in{\rm BV}_{\rm Lip}(\R^d,\mu)\) be given. Define
\begin{equation}\label{eq:def_O_f}
\mathcal O_f\coloneqq\big\{\Omega\subseteq\R^d\text{ open}\;\big|\;
\mu(\Omega)<+\infty,\,\mu(\partial\Omega)=|D_\mu f|_{\rm Lip}(\partial\Omega)=0\big\}.
\end{equation}
Then for any \(\sigma\)-finite Borel measure \(\nu\) on \(\R^d\) and
for any compact set \(K\subseteq\R^d\) it holds that
\begin{equation}\label{eq:good_open_sets}
\nu(K)=\inf\big\{\nu(\Omega)\;\big|\;\Omega\in\mathcal O_f,\,
K\subseteq\Omega\big\}.
\end{equation}
\end{lemma}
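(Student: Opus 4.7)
The plan is to approximate $K$ from above by the open $r$-neighbourhoods $\Omega_r\coloneqq\{x\in\R^d:\sfd_{\rm Eucl}(x,K)<r\}$ and to select $r$'s for which the boundary $\partial\Omega_r$ is negligible for both $\mu$ and $|D_\mu f|_{\rm Lip}$. The inequality $\nu(K)\le\inf\{\nu(\Omega):\Omega\in\mathcal O_f,\,K\subseteq\Omega\}$ is immediate since $K\subseteq\Omega$ gives $\nu(K)\le\nu(\Omega)$; moreover if $\nu(K)=+\infty$ then $\nu(\Omega)=+\infty$ for every $\Omega\supseteq K$ and the equality holds trivially, so I would restrict to the case $\nu(K)<+\infty$. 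Since $\nu$ is $\sigma$-finite and Borel on the Polish space $\R^d$, it is automatically Radon and hence outer regular at $K$; in particular there exists an open set $U\supseteq K$ with $\nu(U)<+\infty$.

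The family $(\Omega_r)_{r>0}$ is decreasing in $r$ with $\bigcap_{r>0}\Omega_r=K$ (by closedness of $K$), and each $\Omega_r$ is bounded since $K$ is compact. I would fix $r_0>0$ so small that $\Omega_{r_0}\subseteq U$; then $\bar\Omega_{r_0}$ is compact, so Radon-ness of $\mu$ gives $\mu(\Omega_{r_0})\le\mu(\bar\Omega_{r_0})<+\infty$, while $\nu(\Omega_{r_0})\le\nu(U)<+\infty$. By continuity of the distance function, $\partial\Omega_r\subseteq\{\sfd_{\rm Eucl}(\cdot,K)=r\}$, so the family $\{\partial\Omega_r\}_{r\in(0,r_0)}$ is pairwise disjoint and entirely contained in $\bar\Omega_{r_0}$.

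The crux is the following measure-theoretic observation: the three measures $\mu|_{\bar\Omega_{r_0}}$, $\nu|_{\bar\Omega_{r_0}}$ and $|D_\mu f|_{\rm Lip}$ are all finite (the last because $f\in{\rm BV}_{\rm Lip}(\R^d,\mu)$), hence at most countably many $r\in(0,r_0)$ can satisfy $(\mu+\nu+|D_\mu f|_{\rm Lip})(\partial\Omega_r)>0$. Choosing any sequence $r_n\searrow 0$ outside this exceptional countable set, each $\Omega_{r_n}$ lies in $\mathcal O_f$; since $\Omega_{r_n}\searrow K$ and $\nu(\Omega_{r_0})<+\infty$, continuity from above of $\nu$ gives $\nu(\Omega_{r_n})\to\nu(K)$, which is the desired reverse inequality.

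The main obstacle is producing $r_0$ for which $\nu(\Omega_{r_0})<+\infty$, without which continuity from above of $\nu$ at $K$ cannot be invoked. This rests on the (standard but nontrivial) fact that every $\sigma$-finite Borel measure on a Polish space is automatically Radon, hence outer regular; once this is granted, the proof reduces to the classical ``uncountably many pairwise disjoint level sets of a continuous function carry positive mass for only countably many levels'' device, applied simultaneously to the three finite measures above.
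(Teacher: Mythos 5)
Your argument is the same as the paper's: approximate \(K\) by the open \(r\)-neighbourhoods \(\Omega_r\), observe that the boundaries \(\partial\Omega_r\subseteq\{\sfd_{\rm Eucl}(\cdot,K)=r\}\) are pairwise disjoint so that only countably many of them can be charged by the relevant finite measures, pick \(r_n\searrow 0\) outside the exceptional set, and conclude by continuity from above of \(\nu\) along \(\Omega_{r_n}\searrow K\). The one step where you go beyond the paper is also the one step that is wrong: it is \emph{not} true that every \(\sigma\)-finite Borel measure on a Polish space is Radon, nor that it is outer regular. The counting measure \(\nu(B)\coloneqq\#(B\cap\mathbb{Q}^d)\) is \(\sigma\)-finite (the set \(\R^d\setminus\mathbb{Q}^d\) is \(\nu\)-null and \(\mathbb{Q}^d\) is a countable union of singletons of measure \(1\)), yet every nonempty open set has infinite \(\nu\)-measure; in particular there is no open \(U\supseteq K\) with \(\nu(U)<+\infty\) when, say, \(K=\{x\}\) with \(x\notin\mathbb{Q}^d\). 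So your construction of \(r_0\) with \(\nu(\Omega_{r_0})<+\infty\), which is what licenses continuity from above, has no valid justification, and indeed no such \(r_0\) need exist.

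That said, you have put your finger on a real issue rather than manufactured one: the same example shows that the lemma \emph{as stated} fails for arbitrary \(\sigma\)-finite \(\nu\). With \(\nu\) the counting measure on \(\mathbb{Q}^d\) and \(K=\{x\}\), \(x\notin\mathbb{Q}^d\), one has \(\nu(K)=0\), while every \(\Omega\in\mathcal O_f\) containing \(K\) (and such \(\Omega\) exist, since almost every \(r\)-neighbourhood of \(K\) belongs to \(\mathcal O_f\)) satisfies \(\nu(\Omega)=+\infty\). The paper's own proof glosses over exactly this point: the assertion \(\nu(K)=\lim_i\nu(\Omega_{r_i})\) uses continuity from above, which is legitimate only once \(\nu(\Omega_{r_i})<+\infty\) for some \(i\). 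Both your proof and the paper's become correct under the additional hypothesis that \(\nu\) is finite on some bounded neighbourhood of \(K\) --- for instance if \(\nu\) is boundedly finite (Radon) or finite --- and this hypothesis is satisfied in the only place the lemma is used, namely the proof of Theorem \ref{thm:Lip_Cinfty}, where \(\nu\) is one of the finite measures \(|D_\mu f|_{C^\infty}\) and \(|D_\mu f|_{\rm Lip}\). So: your exceptional-set device is exactly the paper's, your handling of the trivial inequality and of the case \(\nu(K)=+\infty\) is fine, but the cited regularity fact must be replaced by a strengthened hypothesis on \(\nu\), since the finiteness it was meant to deliver is genuinely unavailable in the stated generality.
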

\begin{proof}
For any \(r>0\), denote by \(\Omega_r\) the open \(r\)-neighbourhood
of \(K\). Since the sets \(\{\Omega_r\}_{r>0}\) have pairwise disjoint
boundaries, we deduce that \(\Omega_r\in\mathcal O_f\) for a.e.\ \(r>0\).
In particular, we can find a sequence \(r_i\searrow 0\) such that
\((\Omega_{r_i})_i\subseteq\mathcal O_f\). Given that
\(K=\bigcap_i\Omega_{r_i}\), we conclude that \(\nu(K)=\lim_i
\nu(\Omega_{r_i})\), whence the claim \eqref{eq:good_open_sets} follows.
\end{proof}
Now, given a function
\(f\in{\rm BV}_{\rm Lip}(\R^d,\mu)\) and an open set \(\Omega\subseteq\R^d\),
we define
\begin{equation}\label{eq:BV_C_infty_open}
|D_\mu f|_{C^\infty}(\Omega)\coloneqq\inf\bigg\{\limi_{n\to\infty}
\int_\Omega|\nabla f_n|\,\d\mu\;\bigg|\;(f_n)_n\subseteq C^\infty(\Omega)
\cap L^1_\mu(\Omega),\,f_n\to f\text{ in }L^1_\mu(\Omega)\bigg\}.
\end{equation}
We can extend it via Carath\'{e}odory construction to a set-function
on all Borel sets, as follows:
\[
|D_\mu f|_{C^\infty}(B)\coloneqq\inf\big\{|D_\mu f|_{C^\infty}(\Omega)
\;\big|\;\Omega\subseteq\R^d\text{ open},\,B\subseteq\Omega\big\}.
\]
By suitably adapting the arguments in \cite[Lemma 4.4.2 and Lemma 4.4.3]{DMPhD},
one can show that \(|D_\mu f|_{C^\infty}\) is a finite Borel measure on \(\R^d\).
\begin{theorem}\label{thm:Lip_Cinfty}
Let \(f\in{\rm BV}_{\rm Lip}(\R^d,\mu)\). Then the measures
\(|D_\mu f|_{C^\infty}\) and \(|D_\mu f|_{\rm Lip}\) coincide.
\end{theorem}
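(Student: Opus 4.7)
The plan is to prove the two inequalities $|D_\mu f|_{\rm Lip}(\Omega) \leq |D_\mu f|_{C^\infty}(\Omega)$ and $|D_\mu f|_{C^\infty}(\Omega) \leq |D_\mu f|_{\rm Lip}(\Omega)$ for every open set $\Omega \subseteq \R^d$; by the Carath\'eodory construction equality on open sets forces equality on every Borel set. The first inequality is immediate: any admissible sequence $(f_n)_n \subseteq C^\infty(\Omega)\cap L^1_\mu(\Omega)$ for $|D_\mu f|_{C^\infty}(\Omega)$ is also admissible for $|D_\mu f|_{\rm Lip}(\Omega)$ (since $C^\infty(\Omega) \subseteq \LIP_{loc}(\Omega)$), and the pointwise identity $\lip_a(f_n) = |\nabla f_n|$ for $C^1$ functions makes the two energies coincide.

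For the nontrivial direction, I use that $|D_\mu f|_{C^\infty}$ is a finite Borel measure, hence inner regular, so it suffices to estimate $|D_\mu f|_{C^\infty}(K)$ from above for each compact $K \subseteq \Omega$. Given $\varepsilon > 0$, Lemma \ref{lem:good_open_sets} produces open sets $U, U' \in \mathcal O_f$ with $K \subseteq U \Subset U' \subseteq \Omega$ and $|D_\mu f|_{\rm Lip}(U') \leq |D_\mu f|_{\rm Lip}(\Omega) + \varepsilon$. By Lemma \ref{lem:localize_tv} applied to $U'$, the restriction $f|_{\bar U'}$ belongs to ${\rm BV}_{\rm Lip}(\bar U',\mu|_{\bar U'})$ with total variation equal to $|D_\mu f|_{\rm Lip}(U')$. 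Invoking the version of \eqref{eq:tot_variation_mass_mms} valid on the metric measure space $(\bar U', \sfd_{\rm Eucl}, \mu|_{\bar U'})$, I obtain a sequence $(g_n)_n$ of compactly-supported Lipschitz functions on $\bar U'$ with $g_n \to f$ in $L^1_\mu(\bar U')$ and $\liminf_n \int_{\bar U'} \lip_a(g_n)\,\d\mu \leq |D_\mu f|_{\rm Lip}(U')$.

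Now I extend each $g_n$ to $\tilde g_n \in \LIP_c(\R^d)$ by a McShane extension followed by multiplication with a fixed compactly supported Lipschitz cut-off that equals $1$ on $\supp(g_n)$, so that $\tilde g_n \equiv g_n$ on $\bar U'$, and mollify: set $h_n^\eta \coloneqq \rho_\eta * \tilde g_n \in C^\infty_c(\R^d)$. Put $\delta \coloneqq \tfrac{1}{2} {\rm dist}(U, \partial U')>0$: for every $x \in U$ and $\eta \in (0,\delta)$ the ball $B_{2\eta}(x)$ lies in $U'$, hence $\tilde g_n$ coincides with $g_n$ there, and \eqref{eq:approx_lip_via_smooth_2} gives $|\nabla h_n^\eta(x)| \leq {\rm Lip}(g_n;B_{2\eta}(x))$. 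Applying Lemma \ref{lem:conv_to_lip_a} to $\tilde g_n$ (and noting that $\lip_a(\tilde g_n)=\lip_a(g_n)$ on $U$) we find ${\rm Lip}(\tilde g_n;B_{2\eta}(\cdot))\to \lip_a(g_n)$ strongly in $L^1_\mu(U)$ as $\eta\searrow 0$. A diagonal choice $\eta_n \searrow 0$ then produces $h_n^{\eta_n}|_U \in C^\infty(U) \cap L^1_\mu(U)$ with $h_n^{\eta_n}|_U\to f$ in $L^1_\mu(U)$ and
\[
\liminf_n \int_U |\nabla h_n^{\eta_n}|\,\d\mu \;\leq\; \liminf_n \int_{\bar U'} \lip_a(g_n)\,\d\mu \;\leq\; |D_\mu f|_{\rm Lip}(\Omega) + \varepsilon.
\]
Thus $|D_\mu f|_{C^\infty}(K) \leq |D_\mu f|_{C^\infty}(U) \leq |D_\mu f|_{\rm Lip}(\Omega) + \varepsilon$; letting $\varepsilon \searrow 0$ and then $K \nearrow \Omega$ completes the argument.

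The main technical obstacle is the interaction between the non-local nature of the asymptotic Lipschitz constant and the need to extend $g_n$ from $\bar U'$ to the whole of $\R^d$ before mollifying: a priori the McShane extension could introduce large slopes near $\partial U'$ that would show up in $\nabla h_n^\eta$. The sandwich $U \Subset U'$ inside $\Omega$, together with the mollification scale $\eta <\tfrac 12 {\rm dist}(U,\partial U')$, is precisely what guarantees that on $U$ the mollification $h_n^\eta$ depends only on the intrinsic data $g_n|_{\bar U'}$, so the gradient energy on $U$ is bounded purely in terms of $\lip_a(g_n)$.
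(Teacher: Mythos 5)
Your proof is correct and follows essentially the same route as the paper's: reduction via the good open sets of Lemma \ref{lem:good_open_sets}, localization through Lemma \ref{lem:localize_tv} combined with the total-mass formula \eqref{eq:tot_variation_mass_mms} on the metric measure space \(\big(\bar U',\sfd_{\rm Eucl},\mu|_{\bar U'}\big)\), and a McShane extension followed by mollification and a diagonal argument. The only (harmless) deviations are technical: where the paper mollifies on the good open set itself and controls the boundary contribution of the extension by reverse Fatou (with dominating function \({\rm Lip}(\bar f_n)\chi_\Omega\in L^1_\mu(\Omega)\), integrable because \(\mu(\Omega)<+\infty\), and the locality of \(\lip_a\) recovering \(\lip_a(f_n)\) in the limit), you instead insert the sandwich \(K\subseteq U\Subset U'\) and keep the mollification scale below \(\tfrac12\,{\rm dist}(U,\partial U')\) so that the extension is never seen on \(U\), and you conclude on arbitrary open sets via inner regularity of the finite Borel measure \(|D_\mu f|_{C^\infty}\) rather than proving equality only on \(\mathcal O_f\) and transferring it through compact sets.
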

\begin{proof}
It suffices to show that \(|D_\mu f|_{C^\infty}(K)=|D_\mu f|_{\rm Lip}(K)\) for every
\(K\subseteq\R^d\) compact. Thanks to Lemma \ref{lem:good_open_sets},
this is verified as soon as \(|D_\mu f|_{C^\infty}(\Omega)=|D_\mu f|_{\rm Lip}(\Omega)\)
for every \(\Omega\in\mathcal O_f\), where \(\mathcal O_f\) is defined
as in \eqref{eq:def_O_f}. Then let \(\Omega\in\mathcal O_f\) be fixed.
Since \(C^\infty(\Omega)\subseteq\LIP_{loc}(\Omega)\) and
\(|\nabla g|=\lip_a(g)\) for all \(g\in C^\infty(\Omega)\), we have that
\(|Df|_{\rm Lip}(\Omega)\leq|D_\mu f|_{C^\infty}(\Omega)\). To prove the converse
inequality, we apply Lemma \ref{lem:localize_tv}: given that
\(f|_{\bar\Omega}\in{\rm BV}_{\rm Lip}(\bar\Omega,\mu|_{\bar\Omega})\) and
\(\big|D_{\mu|_{\bar\Omega}}(f|_{\bar\Omega})\big|_{\rm Lip}(\bar\Omega)=|D_\mu f|_{\rm Lip}(\Omega)\), there exists a
sequence \((f_n)_n\subseteq\LIP_{bs}(\bar\Omega)\) such that
\(f_n\to f\) in \(L^1_\mu(\bar\Omega)\) and
\begin{equation}\label{eq:local_BV_Lip_Cinfty_aux}
\int_\Omega\lip_a(f_n)\,\d\mu=\int_{\bar\Omega}\lip_a(f_n)\,\d\mu\to
\big|D_{\mu|_{\bar\Omega}}(f|_{\bar\Omega})\big|_{\rm Lip}(\bar\Omega)
=|D_\mu f|_{\rm Lip}(\Omega),
\end{equation}
where the first identity is granted by the fact that \(\partial\Omega\)
is \(\mu\)-negligible. Now let \(n\in\N\) be fixed. Extend \(f_n\) to
some \({\rm Lip}(f_n)\)-Lipschitz function \(\bar f_n\colon\R^d\to\R\).
Define \(f_n^m\coloneqq\bar f_n*\rho_{1/m}\in C^\infty(\R^d)\) and
\(g_n^m\coloneqq f_n^m|_\Omega\in C^\infty(\Omega)\) for every \(m\in\N\).
Since \(|g_n^m-f_n|\leq{\rm Lip}(f_n)/m\) by
\eqref{eq:approx_lip_via_smooth_1}, we may estimate
\[
\int_\Omega|g_n^m|\,\d\mu\leq\int_\Omega|f_n|\,\d\mu
+\frac{{\rm Lip}(f_n)\mu(\Omega)}{m}<+\infty,
\]
so that \(g_n^m\in L^1_\mu(\Omega)\). By dominated convergence theorem,
we also obtain that \(g_n^m\to f_n\) in \(L^1_\mu(\Omega)\) as
\(m\to\infty\). Moreover, we have \(|\nabla g_n^m|(x)\leq
{\rm Lip}\big(\bar f_n;B_{2/m}(x)\big)\) for all \(m\in\N\)
and \(x\in\Omega\) by \eqref{eq:approx_lip_via_smooth_2},
so that \(|\nabla g_n^m|\leq{\rm Lip}(f_n)\chi_\Omega\in L^1_\mu(\Omega)\)
on \(\Omega\). An application of the reverse Fatou lemma yields
\[\begin{split}
\lims_{m\to\infty}\int_\Omega|\nabla g_n^m|\,\d\mu&\leq\lims_{m\to\infty}
\int_\Omega{\rm Lip}\big(\bar f_n;B_{2/m}(x)\big)\,\d\mu(x)\leq
\int_\Omega\lim_{m\to\infty}{\rm Lip}\big(\bar f_n;B_{2/m}(x)\big)\,\d\mu(x)\\
&=\int_\Omega\lip_a(f_n)\,\d\mu.
\end{split}\]
Hence, we can choose \(m_n\in\N\) such that the function
\(g_n\coloneqq g_n^{m_n}\) satisfies \(\int_\Omega|g_n-f_n|\,\d\mu\leq 1/n\)
and \(\int_\Omega|\nabla g_n|\,\d\mu\leq\int_\Omega\lip_a(f_n)\,\d\mu
+1/n\).
Then \(C^\infty(\Omega)\cap L^1_\mu(\Omega)\ni g_n\to f\)
in \(L^1_\mu(\Omega)\), so that accordingly
\[
|D_\mu f|_{C^\infty}(\Omega)\leq\limi_{n\to\infty}\int_\Omega|\nabla g_n|\,\d\mu
\leq\lim_{n\to\infty}\int_\Omega\lip_a(f_n)\,\d\mu
\overset{\eqref{eq:local_BV_Lip_Cinfty_aux}}=|D_\mu f|_{\rm Lip}(\Omega).
\]
All in all, we have proved that \(|D_\mu f|_{C^\infty}(\Omega)
=|D_\mu f|_{\rm Lip}(\Omega)\) for every \(\Omega\in\mathcal O_f\),
as desired.
\end{proof}
Note that it also follows from Theorem \ref{thm:Lip_Cinfty} that 
\begin{equation}\label{eq:BV_Cinfty}
{\rm BV}_{\rm Lip}(\R^d, \mu)={\rm BV}_{C^\infty}(\R^d, \mu)\coloneqq 
\big\{f\in L^1_\mu(\R^d):\, |D_\mu f|_{C^\infty}(\R^d)<+\infty\big\}.
\end{equation}  

Moreover, the total variation measure of the entire space can be recovered by using 
only compactly-supported smooth functions:
\begin{lemma}\label{lm:total_mass_smooth}
Let \(f\in {\rm BV}_{\rm Lip}(\R^d,\mu)\). Then
\begin{equation}\label{eq:total_mass_smooth}
|Df|_{\rm Lip}(\R^d)=\inf\; \Big\{\limi_{n\to \infty}\int_{\R^d}|\nabla f_n|\,
{\rm d}\mu:\;(f_n)_n\subseteq C^{\infty}_c(\R^d),\, 
f_n\to f\text{ \rm in }L^1_\mu(\R^d)\Big\}.
\end{equation}
\end{lemma}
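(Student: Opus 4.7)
The plan is to establish the two inequalities separately, using the $\mathrm{LIP}_c$-characterization of the total variation in \eqref{eq:tot_variation_mass_mms} as the bridge, and mollification in the spirit of the proof of Theorem \ref{thm:Lip_Cinfty} to pass from Lipschitz to smooth.

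For the inequality ``$\leq$'' (the non-trivial direction), I would start from \eqref{eq:tot_variation_mass_mms}, which yields a sequence $(f_n)_n\subseteq\LIP_c(\R^d)$ such that $f_n\to f$ in $L^1_\mu(\R^d)$ and $\int_{\R^d}\lip_a(f_n)\,\d\mu\to|D_\mu f|_{\rm Lip}(\R^d)$. For each fixed $n$, I would define the mollifications $g_n^m\coloneqq\rho_{1/m}*f_n\in C^\infty_c(\R^d)$. Setting $K_n$ to be the closed $1$-neighbourhood of $\mathrm{supp}(f_n)$, Lemma \ref{lemma:approx_lip_via_smooth} gives $\mathrm{supp}(g_n^m)\subseteq K_n$ (a compact set with $\mu(K_n)<+\infty$), the bound $|g_n^m-f_n|\leq\mathrm{Lip}(f_n)/m$, and the key estimate $|\nabla g_n^m|(x)\leq\mathrm{Lip}\bigl(f_n;B_{2/m}(x)\bigr)$ for every $x\in\R^d$. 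The first gives $g_n^m\to f_n$ in $L^1_\mu(\R^d)$ as $m\to\infty$. For the gradient part, I would apply Lemma \ref{lem:conv_to_lip_a} to $f_n$ to conclude that $\mathrm{Lip}\bigl(f_n;B_{2/m}(\cdot)\bigr)\to\lip_a(f_n)$ strongly in $L^1_\mu(\R^d)$ as $m\to\infty$, so that in particular
\[
\lims_{m\to\infty}\int_{\R^d}|\nabla g_n^m|\,\d\mu\leq\int_{\R^d}\lip_a(f_n)\,\d\mu.
\]

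With these pieces in hand, I would perform a diagonal extraction: for every $n\in\N$ choose $m_n\in\N$ so that $h_n\coloneqq g_n^{m_n}\in C^\infty_c(\R^d)$ satisfies both $\|h_n-f_n\|_{L^1_\mu(\R^d)}\leq 1/n$ and $\int_{\R^d}|\nabla h_n|\,\d\mu\leq\int_{\R^d}\lip_a(f_n)\,\d\mu+1/n$. By the triangle inequality $h_n\to f$ in $L^1_\mu(\R^d)$, and passing to $\liminf$,
\[
\inf_{(\varphi_n)\subseteq C^\infty_c(\R^d),\,\varphi_n\to f}\,\limi_{n\to\infty}\int_{\R^d}|\nabla \varphi_n|\,\d\mu\leq\limi_{n\to\infty}\int_{\R^d}|\nabla h_n|\,\d\mu\leq|D_\mu f|_{\rm Lip}(\R^d).
\]

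The reverse inequality ``$\geq$'' is immediate: any admissible sequence $(\varphi_n)_n\subseteq C^\infty_c(\R^d)\subseteq\LIP_c(\R^d)$ is a valid competitor in \eqref{eq:tot_variation_mass_mms}, and for smooth functions one has $\lip_a(\varphi_n)=|\nabla \varphi_n|$ pointwise, whence $|D_\mu f|_{\rm Lip}(\R^d)\leq\limi_n\int_{\R^d}|\nabla \varphi_n|\,\d\mu$. Taking the infimum over such sequences yields the desired inequality, and combining the two completes the proof.

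The main delicate point is ensuring the $L^1_\mu$-convergence $g_n^m\to f_n$ as $m\to\infty$ and the gradient bound pass through uniformly enough to permit the diagonal extraction; this is handled thanks to the fact that all mollifications $g_n^m$ live in the common compact set $K_n$ (which has finite $\mu$-measure because $\mu$ is Radon), combined with Lemma \ref{lem:conv_to_lip_a}. Everything else is routine bookkeeping.
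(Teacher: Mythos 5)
Your proposal is correct and takes essentially the same route as the paper's proof: mollify the compactly-supported Lipschitz approximants, control the mollified gradients via \eqref{eq:approx_lip_via_smooth_2}, extract a diagonal sequence, and note that the reverse inequality is immediate from \(C^\infty_c(\R^d)\subseteq\LIP_c(\R^d)\) and \(\lip_a(g)=|\nabla g|\) for smooth \(g\). The only cosmetic differences are that you invoke Lemma \ref{lem:conv_to_lip_a} where the paper applies the reverse Fatou lemma directly (interchangeable here, both resting on the domination \(|\nabla g_n^m|\leq{\rm Lip}(f_n)\chi_{K_n}\)), and that you fix a sequence realizing the infimum in \eqref{eq:tot_variation_mass_mms} at the outset, while the paper works with an arbitrary approximating sequence and takes the infimum at the end.
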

\begin{proof}
Denote by \({\rm R}(f)\) the right-hand side of \eqref{eq:total_mass_smooth}.
Pick a sequence \((f_n)_n\subseteq \LIP_c(\R^d)\) 
converging to \(f\) in \(L^1_\mu(\R^d)\) (whose existence is guaranteed by 
the characterization of BV functions in item 2) of Definition \ref{def:BVmms}).
Fix $n\in \N$ and denote by 
\((f^m_n)_m\subseteq C^{\infty}_c(\R^d)\) the sequence satisfying
\begin{equation}\label{eq:sequence_1}
\big|f^m_n(x)-f_n(x)\big|\leq \frac{1}{m} \quad \text{ and } \quad  
|\nabla f^m_n(x)|\leq {\rm Lip}\big(f_n;B_{1/m}(x)\big)\; \text{ for all }x\in \R^d,
\end{equation}
whose existence is provided by Lemma \ref{lemma:approx_lip_via_smooth}.
Calling \(K_n\) the closed \(1\)-neighbourhood of \({\rm supp}(f_n)\), observe that 
\[|\nabla f^m_n|\leq {\rm Lip}(f_n)\chi_{K_n}\quad\text{ holds for every } m\in \N\]
and (by passing to the limsup in the second inequality in \eqref{eq:sequence_1}) that 
\[
\lims_m|\nabla f^m_n|\leq \lip_a(f_n).
\]
Thus, we may apply the reverse Fatou lemma and get that
\[
\lims_{m\to\infty}\int_{\R^d}|\nabla f^m_n|\,{\rm d}\mu\leq
\int_{\R^d}\lims_{m\to\infty}|\nabla f^m_n|\,{\rm d}\mu \leq
\int_{\R^d}\lip_a(f_n)\,{\rm d}\mu.
\]
Now pick 
\(m_n\in \N\) so that 
\[
\|f^{m_n}_n-f_n\|_{L^1_\mu(\R^d)}\leq \frac{1}{n}
\quad \text{ and }\quad \int_{\R^d}|\nabla f^{m_n}_n|\,{\rm d}\mu
\leq \int_{\R^d}\lip_a(f_n)\,{\rm d}\mu+\frac{1}{n}.
\]
By setting \(g_n\coloneqq f_n^{m_n}\in C^{\infty}_c(\R^d)\), 
we get (via a diagonalization argument) that 
\[
g_n\to f\,\text{ in }L^1_\mu(\R^d)
\quad\text{ and }\quad {\rm R}(f)
\leq \limi_{n\to\infty}\int_{\R^d}|\nabla g_n|\,{\rm d}\mu 
\leq \limi_{n\to\infty}\int_{\R^d}\lip_a(f_n)\,{\rm d}\mu.
\]
This gives that \({\rm R}(f)\leq |Df|(\R^d)\). Given that also 
the opposite inequality holds, 
by the fact that \(C^{\infty}_c(\R^d)\subseteq \LIP_c(\R^d)\), the proof of 
\eqref{eq:total_mass_smooth} is done.
\end{proof}
\subsection{Relation between vector fields and derivations}\label{ssec:Der_VF}
In this subsection we show that the space of bounded derivations 
with bounded divergence is isometrically isomorphic
to the space of bounded vector fields with bounded divergence. 
The main tool we are going to use is
the following
result that we refer to as the \emph{superposition principle for derivations}:
\begin{theorem}\label{thm:superposition_derivations}
Let \(\mathbf{b}\in {\rm Der}_b(\R^n,\mu)\) 
be such that \(|\mathbf b|, {\rm div}(\mathbf b)\in L^1_\mu(\R^d)\). 
Then there exists a finite, non-negative Borel measure $\pi$ on $C([0,1],\R^d)$
concentrated on non-constant absolutely continuous curves having constant speed and
such that
\begin{subequations}\begin{align}\label{eq:superpos_1}
\int_{\R^d}g\,\mathbf{b}(f)\,{\rm d}\mu&=\int\!\!\!\int_0^1 g(\gamma_t)\,
(f\circ\gamma)'_t\,
{\rm d}t\,{\rm d}\pi(\gamma)\quad\text{ for every }\, 
(g,f)\in\LIP(\R^d)\times \LIP_c(\R^d),\\\label{eq:superpos_2}
\int_{\R^d}g\,|\mathbf{b}|\,{\rm d}\mu&=\int\!\!\!\int_0^1 g(\gamma_t)\,|\dot\gamma_t|\,
{\rm d}t\,{\rm d}\pi(\gamma)\quad\text{ for every }\, g\in{\rm LIP}_c(\R^d).
\end{align}\end{subequations}
\end{theorem}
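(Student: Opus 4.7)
My approach is to reinterpret the derivation $\mathbf{b}$ as a normal metric $1$-current on $\R^d$ and then apply the Smirnov--Paolini--Stepanov decomposition of normal $1$-currents as superpositions of elementary curves. The integrability assumptions $|\mathbf{b}|,{\rm div}(\mathbf{b})\in L^1_\mu(\R^d)$ are precisely what is needed for this reduction to go through.

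First, I would associate with $\mathbf{b}$ the metric $1$-current $T_\mathbf{b}$ in the sense of Ambrosio--Kirchheim, defined on pairs $(g,f)\in\LIP(\R^d)\times\LIP_c(\R^d)$ by
\[
T_\mathbf{b}(g,f)\coloneqq\int_{\R^d}g\,\mathbf{b}(f)\,{\rm d}\mu.
\]
Multilinearity and the Leibniz rule satisfied by $\mathbf{b}$ (item 1) of Definition \ref{def:derivation}) yield the algebraic axioms of metric currents, whereas weak locality (item 2) combined with the formula \eqref{eq:formula_for_|b|} for $|\mathbf{b}|$ provides the required continuity and shows that the mass measure of $T_\mathbf{b}$ is exactly $|\mathbf{b}|\,\mu$. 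The bounded-divergence property of $\mathbf{b}$ identifies the boundary as $\partial T_\mathbf{b}=-{\rm div}(\mathbf{b})\,\mu$, a finite signed measure on $\R^d$. Hence $T_\mathbf{b}$ is a normal $1$-current.

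Next, I would invoke Smirnov's decomposition theorem (or its Euclidean refinement due to Paolini--Stepanov) to obtain a finite non-negative Borel measure $\tilde\pi$ on $C([0,1];\R^d)$, concentrated on non-constant absolutely continuous curves, such that
\[
T_\mathbf{b}=\int[\gamma]\,{\rm d}\tilde\pi(\gamma)\qquad\text{and}\qquad\|T_\mathbf{b}\|=\int\|[\gamma]\|\,{\rm d}\tilde\pi(\gamma),
\]
where $[\gamma]$ denotes the $1$-current induced by $\gamma$, namely $[\gamma](g,f)=\int_0^1 g(\gamma_t)(f\circ\gamma)'_t\,{\rm d}t$, whose mass is given by $\|[\gamma]\|(g)=\int_0^1 g(\gamma_t)|\dot\gamma_t|\,{\rm d}t$. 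A curvewise arc-length reparametrization, pushed forward through $\tilde\pi$, then produces the measure $\pi$ still concentrated on non-constant absolutely continuous curves but now with the additional property that $\pi$-a.e.\ $\gamma$ has constant speed, without altering the two identities above. The identity \eqref{eq:superpos_1} is then obtained by testing the decomposition $T_\mathbf{b}=\int[\gamma]\,{\rm d}\pi(\gamma)$ against any $(g,f)\in\LIP(\R^d)\times\LIP_c(\R^d)$, while \eqref{eq:superpos_2} follows by testing the mass identity $\|T_\mathbf{b}\|=\int\|[\gamma]\|\,{\rm d}\pi(\gamma)$ against any $g\in\LIP_c(\R^d)$.

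The main obstacle, in my view, lies in the first step: verifying that $T_\mathbf{b}$ satisfies the continuity axiom of Ambrosio--Kirchheim metric currents. This requires a careful use of the weak-locality estimate $|\mathbf{b}(f)|\leq|\mathbf{b}|\,\lip_a(f)$, combined with a density argument to extend the action of $T_\mathbf{b}$ to the admissible class of Lipschitz test forms beyond $\LIP(\R^d)\times\LIP_c(\R^d)$. A secondary technical issue is the reparametrization step, which must be performed so that it neither creates constant curves nor increases the total mass of the decomposition. Once these points are addressed, Smirnov's theorem can be invoked as a black box and the two formulas follow by direct unwinding of the definitions.
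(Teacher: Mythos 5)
Your proposal is correct and follows essentially the same route as the paper, which likewise associates with \(\mathbf b\) the functional \(T_{\mathbf b}(g,f)\coloneqq\int g\,\mathbf b(f)\,{\rm d}\mu\), observes that it defines a normal \(1\)-current, and invokes the metric version of Smirnov's superposition principle due to Paolini--Stepanov \cite{PS1,PS2}, with the constant-speed formulation taken from \cite{GDMSP}. Regarding the continuity axiom you single out as the main obstacle: the standard way to verify it uses the divergence rather than density alone, namely one integrates by parts via the Leibniz rule \eqref{eq:Leibniz_div} to write \(\int g\,\mathbf b(f_n)\,{\rm d}\mu=-\int f_n\big(\mathbf b(g)+g\,{\rm div}(\mathbf b)\big)\,{\rm d}\mu\) (after localizing \(g\)) and then applies dominated convergence when \(f_n\to f\) pointwise with uniformly bounded Lipschitz constants and supports, which is precisely where the hypothesis \({\rm div}(\mathbf b)\in L^1_\mu(\R^d)\) enters.
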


The above result is a consequence of a metric version 
(provided by Paolini and Stepanov in \cite{PS1, PS2}) of the
superposition principle for normal 1-currents
proven by Smirnov in \cite{S93} and of the fact that any element of 
\({\rm Der}_b(\R^d,\mu)\) induces a normal 1-current. 
Namely, for any \(\mathbf{b}\in {\rm Der}_b(\R^d,\mu)\), the map 
given by 
\[T_{\mathbf{b}}(g,f)\coloneqq\int g\,\mathbf{b}(f)\,{\rm d}\mu\quad\text{ for every }\,
(g,f)\in\LIP(\R^d)\times \LIP_c(\R^d),\]
defines a normal
$1$-current on $\R^d$. 
The formulation given in Theorem \ref{thm:superposition_derivations}
is due to \cite{GDMSP}.
\begin{theorem}\label{thm:derivations_vs_vectorfields}
The operator 
\(\Phi\colon {\rm D}_\infty({\rm div}_\mu)\to {\rm Der}_b(\R^d,\mu)\), given by
\[
\Phi(v)(f)\coloneqq v\cdot \nabla_\mu f\in L^{\infty}_\mu(\R^d),\quad
\text{ for every } v\in D_{\infty}({\rm div}_\mu)\text{ and every } f\in \LIP_c(\R^d),
\] is a 
bijection, \(\LIP_c(\R^d)\)-linear and satisfies 
\begin{equation}\label{eq:derivations_vs_vectorfields}
|\Phi(v)|=|v| \quad \text{ and }\quad {\rm div}\big(\Phi(v)\big)
={\rm div}_\mu(v)\quad \mu\text{-a.e.,}\,
\text{ for every }v\in {\rm D}_{\infty}({\rm div}_\mu).
\end{equation}
\end{theorem}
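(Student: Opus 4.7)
The plan is to verify the four assertions in sequence: that \(\Phi(v) \in {\rm Der}_b(\R^d,\mu)\) with the claimed divergence formula and the norm bound \(|\Phi(v)| \le |v|\), that \(\Phi\) is \(\LIP_c\)-linear and injective, and finally that \(\Phi\) is surjective, which will yield the converse bound \(|v| \le |\Phi(v)|\) by a sandwich argument.

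First, since \(v(x)\in T_\mu(x)\) for \(\mu\)-a.e.\ \(x\), extending \(\nabla_\mu\) to \(\LIP_c(\R^d)\) via Proposition~\ref{prop:Lipc_W1,1} makes \(\Phi(v)(f) = v\cdot\nabla_\mu f\) a well-defined element of \(L^\infty_\mu(\R^d)\). The Leibniz rule for \(\Phi(v)\) is inherited from Lemma~\ref{lm:Leibniz_tan_gradient}; the weak locality and the pointwise bound \(|\Phi(v)|\le|v|\) follow from \(|v\cdot\nabla_\mu f|\le|v|\,|\nabla_\mu f|\le|v|\,\lip_a(f)\) via Proposition~\ref{prop:Lipc_W1,1} and formula~\eqref{eq:formula_for_|b|}. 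For the divergence, the inclusion \(v\in T_\mu\) forces \(v\cdot\nabla_\mu f = v\cdot\nabla f\) for every \(f\in C^\infty_c(\R^d)\), so the defining property of \({\rm div}_\mu(v)\) yields \(\int\Phi(v)(f)\,{\rm d}\mu = -\int f\,{\rm div}_\mu(v)\,{\rm d}\mu\) on smooth test functions; mollifying via Lemma~\ref{lemma:approx_lip_via_smooth} and dominated convergence extends this to \(f\in\LIP_c(\R^d)\), whence \({\rm div}(\Phi(v)) = {\rm div}_\mu(v)\in L^\infty_\mu(\R^d)\). The \(\LIP_c\)-linearity \(\Phi(hv) = h\Phi(v)\) is a direct consequence of the defining formula.

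Injectivity I would prove by testing against \(f_n = \eta_n x_i\in C^\infty_c(\R^d)\), where \(\eta_n\) is a smooth cutoff equal to \(1\) on \(B_n(0)\): on the interior of \(\{\eta_n=1\}\) one has \(\nabla_\mu(\eta_n x_i) = {\rm pr}_{T_\mu}(e_i)\), and since \(v\in T_\mu\) \(\mu\)-a.e.\ this gives \(v\cdot\nabla_\mu(\eta_n x_i) = v_i\); thus \(\Phi(v)=0\) forces \(v_i=0\) on each \(B_n(0)\), and hence \(v=0\) \(\mu\)-a.e.

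The main obstacle is surjectivity. Given \(\mathbf{b}\in{\rm Der}_b(\R^d,\mu)\), a truncation by \(\chi_n\in C^\infty_c(\R^d)\) with \(\chi_n=1\) on \(B_n(0)\) produces \(\chi_n\mathbf{b}\in{\rm Der}_b\) with \(|\chi_n\mathbf{b}|,\,{\rm div}(\chi_n\mathbf{b})\in L^1_\mu\) via~\eqref{eq:Leibniz_div}, so Theorem~\ref{thm:superposition_derivations} produces measures \(\pi_n\) on curves for which~\eqref{eq:superpos_1}, combined with \((f\circ\gamma)'_t = \nabla f(\gamma_t)\cdot\dot\gamma_t\), reads
\[
\int_{\R^d}g\,\chi_n\,\mathbf{b}(f)\,{\rm d}\mu = \int\!\!\!\int_0^1 g(\gamma_t)\,\nabla f(\gamma_t)\cdot\dot\gamma_t\,{\rm d}t\,{\rm d}\pi_n(\gamma)
\]
for every \(f\in C^\infty_c(\R^d)\) and \(g\in\LIP(\R^d)\). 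Disintegrating \(\pi_n\otimes{\rm d}t\) along the evaluation map \((\gamma,t)\mapsto\gamma_t\) produces a vector field \(v_n\in L^\infty_\mu(\R^d;\R^d)\) with \(\chi_n\mathbf{b}(f) = v_n\cdot\nabla f\) \(\mu\)-a.e.\ for \(f\in C^\infty_c(\R^d)\), while \eqref{eq:superpos_2} and Jensen's inequality give \(|v_n|\le|\mathbf{b}|\). The fields \(v_n\) are consistent on the overlaps \(B_n(0)\subseteq B_{n+1}(0)\) and glue to a global \(v\in L^\infty_\mu(\R^d;\R^d)\) with \(|v|\le|\mathbf{b}|\) and \(\mathbf{b}(f) = v\cdot\nabla f\) for every \(f\in C^\infty_c(\R^d)\); integration then gives \(v\in{\rm D}_\infty({\rm div}_\mu)\) with \({\rm div}_\mu(v) = {\rm div}(\mathbf{b})\), so \(v\in T_\mu\) and \(\Phi(v)(f) = v\cdot\nabla_\mu f = v\cdot\nabla f = \mathbf{b}(f)\) on \(C^\infty_c(\R^d)\). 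A duality argument based on Leibniz, namely testing the difference \(\mathbf{b}(f) - \Phi(v)(f)\) against \(g\in C^\infty_c(\R^d)\) and using the divergence identity together with the already-established equality on smooth functions, extends the identity to \(f\in\LIP_c(\R^d)\), whence \(\Phi(v) = \mathbf{b}\); the sandwich \(|\Phi(v)|\le|v|\le|\mathbf{b}|=|\Phi(v)|\) then closes the argument. The chief technical difficulty is the disintegration producing \(v\) from \(\pi_n\), together with verifying the pointwise identity \(\mathbf{b}(f) = v\cdot\nabla f\) \(\mu\)-a.e.\ and the compatibility of the construction across the truncation scale.
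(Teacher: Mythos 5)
Your proposal is correct, and its engine is the same as the paper's: the superposition principle of Theorem \ref{thm:superposition_derivations} applied to derivations with \(L^1_\mu\) data, disintegration of the induced measure on \(\R^d\times\R^d\) to produce the vector field, and truncation by cut-offs to reach arbitrary \(\mathbf b\in{\rm Der}_b(\R^d,\mu)\). Within this common skeleton you deviate in three places, all legitimately. First, you prove injectivity directly, testing \(\Phi(v)\) against \(\eta_n x_i\) and using \(v(x)\in T_\mu(x)\) together with the self-adjointness of \({\rm pr}_{T_\mu(x)}\) to read off the components \(v_i\); the paper instead proves the full identity \(|\Phi(v)|=|v|\) first (its inequality \(|v|\le|\Phi(v)|\) uses the very same device, with a dense family \((w_i)_i\subseteq\mathbb S^{d-1}\) and smooth \(f_{i,k}\) with \(\nabla f_{i,k}=w_i\) on \(B_k(0)\)) and deduces injectivity from it. Second, you recover \(|v|\le|\Phi(v)|\) a posteriori by the sandwich: given \(v\), run the surjectivity construction on \(\mathbf b\coloneqq\Phi(v)\) to obtain \(v'\) with \(\Phi(v')=\mathbf b\) and \(|v'|\le|\mathbf b|\), then conclude \(v'=v\) by injectivity; this is sound precisely because your injectivity argument is independent of the norm identity, so there is no circularity. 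Third, for general \(\mathbf b\) you truncate \emph{before} invoking superposition and glue the local fields \(v_n\), whereas the paper first settles the class with \(|\mathbf b|,{\rm div}(\mathbf b)\in L^1_\mu(\R^d)\) and then extracts weak\(^*\) limits of \(v_n=\Phi^{-1}(\eta_n\mathbf b)\), exploiting the weak\(^*\)-closedness of \({\rm div}_\mu\); your gluing buys elementarity (no compactness or closedness of the divergence is needed), but the consistency of \(v_n\) and \(v_{n+1}\) on \(B_n(0)\), which you only assert, does require an argument --- it follows from the \(\mu\)-a.e.\ identities \(v_m\cdot\nabla f=\chi_m\,\mathbf b(f)\) tested on the same coordinate-type functions \(\zeta x_i\) as in your injectivity step (a countable family pins down \(v_m\) \(\mu\)-a.e.\ on each ball), together with \(|\chi_n\mathbf b|\le\chi_n|\mathbf b|\le|\mathbf b|\) for the uniform bound. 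Two further small points: when extending the divergence identity for \(\Phi(v)\) from \(C^\infty_c(\R^d)\) to \(\LIP_c(\R^d)\), mollification plus dominated convergence alone does not handle the term \(\int v\cdot\nabla_\mu f_\varepsilon\,\d\mu\) --- you need either the strong \(W^{1,1}\)-approximation granted by Proposition \ref{prop:Lipc_W1,1} (the paper's route) or domination plus the closedness of \(\nabla_\mu\) from Proposition \ref{prop:ext_tg_grad} to get \(\nabla_\mu f_\varepsilon\rightharpoonup\nabla_\mu f\) weakly in \(L^1_\mu\); and your Leibniz-duality extension of \(\mathbf b(f)=v\cdot\nabla_\mu f\) to \(f\in\LIP_c(\R^d)\), via \(\int g\,\mathbf b(f)\,\d\mu=-\int gf\,{\rm div}(\mathbf b)\,\d\mu-\int f\,\mathbf b(g)\,\d\mu\) and its twin for \(\Phi(v)\), is a clean way of making precise the approximation step that the paper's Step 3 leaves implicit.
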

\begin{proof}\ \\
{\color{blue}\sc Step 1.}
First of all, given \(v\in {\rm D}_{\infty}({\rm div}_\mu)\),
we verify that \(\Phi(v)\in {\rm Der}_b(\R^d,\mu)\). The linearity of \(\Phi(v)\) clearly 
holds true, while the properties 1) and 2) in the Definition \ref{def:derivation} 
follow from the fact that the gradient operator 
\(\nabla_\mu\) satisfies the Leibniz rule (see Lemma \ref{lm:Leibniz_tan_gradient})
and from the \(\mu\)-a.e.\ inequality (granted by Proposition \ref{prop:Lipc_W1,1})
\[
\big|\Phi(v)(f)\big|\leq |v||\nabla_\mu f|\leq |v|\,\lip_a(f)\quad \text{ for every }f\in \LIP_c(\R^d),
\]
respectively. 
We now prove that \(|\Phi(v)|=|v|\in L^{\infty}_\mu(\R^d)\). 
Recalling formula \eqref{eq:formula_for_|b|}, we have that 
\[
\begin{split}
|\Phi(v)|=\, &\text{ess sup}\, \big\{|v\cdot\nabla_\mu f|:
\, f\in \LIP_c(\R^d),\,{\rm Lip}(f)\leq 1\big\}\\
\leq\, &\text{ess sup}\, \big\{|v|\,|\nabla_\mu f|:
\, f\in \LIP_c(\R^d),\,{\rm Lip}(f)\leq 1\big\} \leq |v|, 
\end{split}
\]
holds \(\mu\)-a.e..
To prove the opposite inequality, take a dense sequence
\((w_i)_i\subseteq \mathbb S^{d-1}\coloneqq \{w\in \R^d: |w|=1\}\).
Then for \(\mu\)-a.e.\ \(x\in \R^d\) we have that 
\(|v|(x)=\sup_{i\in \N}v(x)\cdot w_i\). Now, for every \(i, k\in \N\) choose 
\(f_{i,k}\in C^{\infty}_c(\R^d)\) such that
\(\nabla f_{i,k}=w_i\) on \(B_k(0)\). Then, given \(k\in \N\), we have that 
\[
\begin{split}
|v|(x)=\sup_{i\in \N}\,v(x)\cdot w_i= & \sup_{i\in \N}\,v(x)\cdot \nabla f_{i,k}(x)= 
\sup_{i\in \N}\,v(x)\cdot \nabla_\mu f_{i,k}(x)\\
=& \sup_{i\in \N}\, \Phi(v)(f_{i,k})(x)
\leq |\Phi(v)|(x)\,\lip_a(f_{i,k})(x)\leq |\Phi(v)|(x),
\end{split}
\]
for \(\mu\)-a.e.\ \(x\in B_k(0)\). By the arbitrariness of \(k\), we conclude that 
\(|v|\leq|\Phi(v)|\) holds \(\mu\)-a.e.\ in \(\R^d\). 

To see that  \(\Phi(v)\) admits bounded divergence, let us first observe that
for every \(f\in C^{\infty}_c(\R^d)\) it holds that
\begin{equation}\label{eq:phi(v)_div_smooth}
\int_{\R^d}\Phi(v)(f)\,{\rm d}\mu=\int_{\R^d}v\cdot \nabla_\mu f\,{\rm d}\mu
\overset{\eqref{eq:tangential_grad_smooth}}{=}\int_{\R^d}v\cdot\nabla f\,{\rm d}\mu
= -\int_{\R^d}f\,{\rm div}_{\mu}(v)\,{\rm d}\mu.
\end{equation}
Now, given any \(f\in \LIP_c(\R^d)\), we know that \(f\in W^{1,1}(\R^d,\mu)\) and 
thus we can find a sequence \((f_n)_n\subseteq C^{\infty}_c(\R^d)\) such that
\[f_n\to  f\;\text{ in } L^1_\mu(\R^d)\quad\text{ and }\quad
\nabla_\mu f_n\to
\nabla_\mu f\;\text{ in }L^1_\mu(\R^d;\R^d).\]
Thus, we can pass to the limit in \eqref{eq:phi(v)_div_smooth} and get that
\[
\int_{\R^d}\Phi(v)(f)\,{\rm d}\mu
=\lim_{n\to \infty}\int_{\R^d}v\cdot\nabla_\mu f_n\,{\rm d}\mu
=-\lim_{n\to \infty}\int_{\R^d}f_n\,{\rm div}_{\mu}(v)\,{\rm d}\mu=
-\int_{\R^d}f\,{\rm div}_{\mu}(v)\,{\rm d}\mu.
\]
By the arbitrariness of 
\(f\in \LIP_c(\R^d)\), this proves that \(\Phi(v)\) admits divergence 
and that \({\rm div}\big(\Phi(v)\big)={\rm div}_\mu (v)\in L^{\infty}_\mu(\R^d)\).

{\color{blue}\sc Step 2.} What remains to show is that \(\Phi\) is bijective.
The injectivity 
of \(\Phi\) is granted by the \(\mu\)-a.e.\ equality \(|\Phi(v)|=|v|\) proved in 
{\sc Step 1.}
Let us now fix \(\mathbf{b}\in {\rm Der}_b(\R^d,\mu)\) 
such that \(|\mathbf{b}|, {\rm div}(\mathbf b)\in L^1_\mu(\R^d)\).
Let \(\pi\) be the measure on the space of curves \(C\big([0,1],\R^d\big)\) given by the 
superposition principle in Theorem \ref{thm:superposition_derivations}. Define the map 
\({\sf D}\colon C\big([0,1],\R^d\big)\times [0,1]\to \R^d\times \R^d\) as
\[
{\sf D}(\gamma, t)\coloneqq (\gamma_t,\dot\gamma_t),\quad \text{ for every }
(\gamma, t)\in C\big([0,1],\R^d\big)\times [0,1].
\]
We further set \(\nu\coloneqq {\sf D}_*\big(\pi\otimes \mathcal L^1|_{[0,1]}\big)\).
Calling \(p\colon \R^d\times \R^d\to \R^d\) the canonical projection map, i.e.\ 
\(p(x,v)=x\) for every \((x,v)\in \R^d\times \R^d\), we disintegrate the measure 
\(\nu\) with respect to the map \(p\), getting a measurable family  
\(\{\nu_x\}_{x\in \R^d}\) of probability measures
\(\nu_x\) on \(\R^d\) satisfying
\[
\int_{\R^d\times \R^d}g(x,v)\,{\rm d}\nu(x,v)=\int_{\R^d}\int_{\R^d}g(x,\cdot)\,
{\rm d}\nu_x\,{\rm d}p_*\nu(x),\quad \text{ for every }g\in L^1_\nu(\R^d\times \R^d).
\]
We claim that 
\(p_*\nu\ll \mu\). Indeed, by using \eqref{eq:superpos_2} we have for every 
\(g\in\LIP_c(\R^d)\) that
\[\begin{split}
\int_{\R^d}g|\mathbf{b}|\,{\rm d}\mu&=
\int\!\!\!\int_0^1 g(\gamma_t)\,|\dot{\gamma_t}|\,{\rm d} t\,{\rm d}\pi(\gamma)\\
&=\int_{\R^d} g(x)\bigg(\int_{\R^d}|w|\,{\rm d}\nu_x(w)\bigg){\rm d} p_{*}\nu(x).
\end{split}\]
By the arbitrariness of \(g\in \LIP_c(\R^d)\), we have that 
\begin{equation}\label{eq:p*nu_abs_cont_mu}
|\mathbf{b}|\mu=\int_{\R^d}|w|\,{\rm d}\nu_{(\cdot)}(w)\,p_*\nu.
\end{equation} 
Since the measure \(\pi\) is concentrated on non-constant 
curves having constant speed, we have that \(\dot{\gamma_t}\neq 0\) 
for \((\pi\otimes \mathcal L^1|_{[0,1]})\)-a.e.\ \((\gamma,t)\). This implies that
\(\int_{\R^d}|w|\,{\rm d}\nu_x(w)>0\) holds for \(p_*\nu\)-a.e.\ \(x\in \R^d\). 
Therefore, 
we conclude that \(p_*\nu\ll\mu\).

{\color{blue}\textsc{Step 3.}} Now, we define 
\[
v(x)\coloneqq \frac{{\rm d}p_*\nu}{{\rm d}\mu}(x)\int_{\R^d}w\,{\rm d}\nu_x(w),
\quad \text{ for }\mu\text{-a.e.\ }x\in \R^d.
\] 
Our aim is to show that \(v\in {\rm D}_{\infty}({\rm div}_\mu)\) and that 
\(\mathbf{b}(f)=\nabla_\mu f\cdot v=\Phi(v)(f)\) for every \(f\in \LIP_c(\R^d)\).
First of all, observe that the formula \eqref{eq:p*nu_abs_cont_mu} ensures that 
\(|v|\leq |\mathbf{b}|\) holds \(\mu\)-a.e., thus \(v\in L_\mu^{\infty}(\R^d)\).
By using formula \eqref{eq:superpos_1} and by unwrapping the above definitions 
we have the following:
given any \(g\in{\rm LIP}_c(\R^d)\) and \(f\in C^\infty_c(\R^d)\) it holds that
\[\begin{split}
\int_{\R^d} g\,\mathbf{b}(f)\,{\rm d}\mu&=
\int\!\!\!\int_0^1 g(\gamma_t)\,\nabla f(\gamma_t)\cdot\dot\gamma_t\,{\rm d} t\,
{\rm d}\pi(\gamma)\\
&=\int_{\R^d} g(x)\bigg(\int\nabla f(x)\cdot w\,{\rm d}\nu_x(w)\bigg){\rm d }p_*\nu(x)\\
&=\int_{\R^d} g(x)\bigg(\frac{{\rm d} p_*\nu}{{\rm d}\mu}(x)
\int_{\R^d}\nabla f(x)\cdot w\,{\rm d}\nu_x(w)\bigg){\rm d}\mu(x)\\
&=\int_{\R^d} g(x)\,\nabla f(x)\cdot v(x)\,{\rm d}\mu(x).
\end{split}\]
Thus, since \(g\in \LIP_c(\R^d)\) was arbitrary, we deduce that
\(\mathbf{b}(f)(x)=\nabla f(x)\cdot v(x)\) for \(\mu\)-a.e.\ \(x\in \R^d\).
This also ensures that 
\[
\int_{\R^d}\nabla f\cdot v\,{\rm d}\mu=\int_{\R^d}\mathbf{b}(f)\,{\rm d}\mu
=-\int_{\R^d}{\rm div}(\mathbf{b})\,f\,{\rm d}\mu\]
holds for every \(f\in C^{\infty}_c(\R^d)\). Hence, \(v\) is a vector field with divergence 
and \({\rm div}_\mu(v)={\rm div}(\mathbf{b})\). 
All in all, we have proved that \(v\in {\rm D}_{\infty}({\rm div}_\mu)\). 
Consequently, we have that 
\begin{equation}\label{eq:b(f)_smooth}
\mathbf{b}(f)=\nabla f\cdot v=\nabla_\mu f\cdot v\quad \text{ holds for every }\,
f\in C^\infty_c(\R^d).
\end{equation}
By approximation, we can obtain \eqref{eq:b(f)_smooth} for every \(f\in \LIP_c(\R^d)\),
proving that 
\(\Phi\big({\rm D}_{\infty}({\rm div}_\mu)\big)\subseteq
\{\mathbf{b}\in {\rm Der}_b(\R^d,\mu):\,
|\mathbf b|, {\rm div}(\mathbf b)\in L^1_\mu(\R^d)\}\eqqcolon \mathcal D.\)\\

{\color{blue}\textsc{Step 4.}} It remains to show that \(\Phi\) is surjective.
Fix \(\mathbf{b}\in {\rm Der}_b(\R^d, \mu)\) and fix a sequence 
\((\eta_n)_n\subseteq C^\infty_c(\R^d)\) such that \(0\leq \eta_n\leq 1\), \(\eta_n=1\) 
on \(B_n(0)\) and \({\rm Lip}(\eta_n)=1\) for each \(n\in \N\). We set 
\(\mathbf b_n\coloneqq \eta_n\mathbf b\) and note that \(\mathbf b_n\in \mathcal D\). 
Moreover, 
\(|\mathbf b_n|\leq |\mathbf b|\) and 
\(|{\rm div}(\mathbf b_n)|=|\mathbf b(\eta_n)+\eta_n{\rm div}(\mathbf b)|
\leq |\mathbf b|+|{\rm div}(\mathbf b)|\). By \textsc{Step 3} for every \(n\in \N\)
we have the existence of an element \(v_n\in {\rm D}_{\infty}({\rm div}_\mu)\) such 
that \(\Phi(v_n)=\mathbf b_n\). Also, \(|v_n|=|\mathbf b_n|\leq |\mathbf b|\) and 
\(|{\rm div}_\mu(v_n)|=|{\rm div}(\mathbf b_n)|\leq |\mathbf b|+|{\rm div}(\mathbf b)|\), 
thus (up to a subsequence) we have that \(v_n\rightharpoonup v\) weakly\(^*\) in 
\(L^\infty_\mu(\R^d; \R^d)\) for some \(v\in L^\infty_\mu(\R^d; \R^d)\)
and \({\rm div}_\mu(v_n)\rightharpoonup h\) weakly\(^*\) in 
\(L^\infty_\mu(\R^d)\) for some \(h\in L^\infty_\mu(\R^d)\). Moreover, due to the 
closure of the operator \({\rm div}_\mu\) we have that 
\(v\in {\rm D}_\infty({\rm div}_\mu)\)
and that \(h={\rm div}_\mu(v)\). We only need to check that \(\Phi(v)=\mathbf b\). 
Let us fix \(f\in \LIP_c(\R^d)\). Then, for every \(n\in \N\) we have that 
\(\mathbf b_n(f)=\Phi(v_n)(f)=v_n\cdot \nabla_\mu f\). 
Since \(v_n\cdot \nabla_\mu f\) and \(\mathbf b_n(f)=\eta_n\mathbf b(f)\) converge
weakly\(^*\) in \(L^\infty_\mu(\R^d)\) to \(v\cdot \nabla_\mu f\) and \(\mathbf b(f)\),
respectively, we get that \(\mathbf b(f)=v\cdot \nabla_\mu f=\Phi(v)(f)\). By the 
arbitrariness of \(f\in \LIP_c(\R^d)\), we get the surjectivity of \(\Phi\) and conclude 
the proof.
\end{proof}
\subsection{Equivalence of BV spaces}
As our main result, we have the following equivalent characterizations of 
the BV space: 

\begin{theorem}[Equivalent characterizations of BV function]
\label{thm:equiv_BV}
It holds that
\[
{\rm BV}(\R^d,\mu)=
{\rm BV}_{\rm Der}(\R^d,\mu)={\rm BV}_{\rm Lip}(\R^d,\mu)
={\rm BV}_{C^\infty}(\R^d,\mu).
\]
Moreover, it holds that
\(|D_\mu f|=|D_\mu f|_{\rm Der}=|D_\mu f|_{\rm Lip}=|D_\mu f|_{C^\infty}\)
for every \(f\in{\rm BV}(\R^d,\mu)\).
\end{theorem}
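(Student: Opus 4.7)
The plan is to reduce the four-way equality to the single identification $\mathrm{BV}(\R^d,\mu)=\mathrm{BV}_{\mathrm{Der}}(\R^d,\mu)$ together with $|D_\mu f|=|D_\mu f|_{\mathrm{Der}}$, because $\mathrm{BV}_{\mathrm{Lip}}=\mathrm{BV}_{C^\infty}$ (with equal total variation measures) is Theorem \ref{thm:Lip_Cinfty}, and $\mathrm{BV}_{\mathrm{Lip}}=\mathrm{BV}_{\mathrm{Der}}$ (with equal measures) is \eqref{eq:intro_DM_eqBV} from \cite{DMPhD}. For the inclusion $\mathrm{BV}_{\mathrm{Der}}\subseteq\mathrm{BV}$ together with the identity $|D_\mu f|=|D_\mu f|_{\mathrm{Der}}$ on open sets, I would invoke the isometric isomorphism $\Phi\colon{\rm D}_\infty({\rm div}_\mu)\to{\rm Der}_b(\R^d,\mu)$ of Theorem \ref{thm:derivations_vs_vectorfields}: since $\Phi$ preserves the pointwise norm, the divergence, and the support, the sup-type definitions of $|D_\mu f|(\Omega)$ and $|D_\mu f|_{\mathrm{Der}}(\Omega)$ match for every $f\in L^1_\mu(\R^d)$ and every open $\Omega\subseteq\R^d$, so Carath\'{e}odory extension (using that $|D_\mu f|_{\mathrm{Der}}$ is a finite Borel measure by \cite{DMPhD}) upgrades this to an identity of measures as soon as the opposite inclusion is available.

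For $\mathrm{BV}\subseteq\mathrm{BV}_{\mathrm{Der}}$, fix $f\in\mathrm{BV}(\R^d,\mu)$ and use the characterization \eqref{eq:intro_tot_var_tangential} of $\|D_\mu f\|$ from \cite{BBF} to obtain a sequence $(f_n)_n\subseteq C^\infty_c(\R^d)$ with $f_n\to f$ in $L^1_\mu(\R^d)$ and $\sup_n\int|\nabla_\mu f_n|\,\d\mu<\infty$. Given $\mathbf{b}\in{\rm Der}_b(\R^d,\mu)$, set $v\coloneqq\Phi^{-1}(\mathbf{b})$ and consider the linear functional on $\LIP_c(\R^d)$ defined by $T_{\mathbf{b}}(\phi)\coloneqq-\int_{\R^d}f\,{\rm div}(\phi\mathbf{b})\,\d\mu$. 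Combining the divergence duality applied to $f_n\in\LIP_c(\R^d)$ with the identity $\mathbf{b}(f_n)=v\cdot\nabla_\mu f_n$ yields
\[
-\int_{\R^d}f_n\,{\rm div}(\phi\mathbf{b})\,\d\mu=\int_{\R^d}\phi\,\mathbf{b}(f_n)\,\d\mu=\int_{\R^d}\phi\,v\cdot\nabla_\mu f_n\,\d\mu,
\]
whose modulus is bounded by $\|\phi\|_\infty\|\mathbf{b}\|_b\int|\nabla_\mu f_n|\,\d\mu$. Passing to the limit $n\to\infty$ (using $\mathrm{div}(\phi\mathbf{b})\in L^\infty_\mu(\R^d)$ together with the Leibniz rule \eqref{eq:Leibniz_div}), one obtains $|T_{\mathbf{b}}(\phi)|\leq\|\phi\|_\infty\|\mathbf{b}\|_b\|D_\mu f\|$. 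Density of $\LIP_c(\R^d)$ in $C_0(\R^d)$ then extends $T_{\mathbf{b}}$ to a continuous linear functional, which via Remark \ref{rmk:Measures} gives the required measure $L_f(\mathbf{b})\in\mathscr{M}(\R^d)$, with total variation at most $\|\mathbf{b}\|_b\|D_\mu f\|$; the $\LIP_c$-linearity of $\mathbf{b}\mapsto L_f(\mathbf{b})$ and the $\|\cdot\|_b$-continuity are immediate from the construction.

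The remaining point, which I expect to be the main technical obstacle, is the defining integral identity $L_f(\mathbf{b})(\R^d)=-\int f\,{\rm div}(\mathbf{b})\,\d\mu$: the Leibniz rule introduces a cross term that must be shown to vanish in a suitable limit \emph{without} assuming any additional integrability on $|\mathbf{b}|$. My plan is to test $L_f(\mathbf{b})$ against the tent functions $\eta_n(x)\coloneqq(1-|x|/n)_+\in\LIP_c(\R^d)$, so that
\[
\int_{\R^d}\eta_n\,\d L_f(\mathbf{b})=-\int_{\R^d}f\big[\mathbf{b}(\eta_n)+\eta_n\,{\rm div}(\mathbf{b})\big]\,\d\mu;
\]
dominated convergence (applied to the finite measure $L_f(\mathbf{b})$ and to $f\,\mathrm{div}(\mathbf{b})\in L^1_\mu(\R^d)$) handles the left-hand side and the $\eta_n\,{\rm div}(\mathbf{b})$ piece, while the estimate $|\mathbf{b}(\eta_n)|\leq\|\mathbf{b}\|_b\,\lip_a(\eta_n)\leq\|\mathbf{b}\|_b/n$ combined with $f\in L^1_\mu(\R^d)$ forces $\int f\,\mathbf{b}(\eta_n)\,\d\mu\to 0$. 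Combining the resulting identification $\mathrm{BV}=\mathrm{BV}_{\mathrm{Der}}$ with Theorem \ref{thm:Lip_Cinfty} and \eqref{eq:intro_DM_eqBV} then delivers the equality of all four spaces and of the four total variation measures.
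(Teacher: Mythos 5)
Your proposal is correct and follows essentially the same route as the paper's proof: reduce everything to \({\rm BV}(\R^d,\mu)={\rm BV}_{\rm Der}(\R^d,\mu)\) via Theorem \ref{thm:Lip_Cinfty} and \cite[Theorem 4.5.3]{DMPhD}, use the isomorphism \(\Phi\) of Theorem \ref{thm:derivations_vs_vectorfields} (preserving \(|\cdot|\), divergence, and support) for the inclusion \({\rm BV}_{\rm Der}\subseteq{\rm BV}\) and for the equality of the localized total variations, and for the converse inclusion construct \(L_f(\mathbf b)\in\mathscr M(\R^d)\) by extending \(h\mapsto-\int_{\R^d}f\,{\rm div}(h\mathbf b)\,\d\mu\) from \(\LIP_c(\R^d)\) to \(C_0(\R^d)\), with the bound \(\|h\|_{C_b(\R^d)}\|\mathbf b\|_b\,\|D_\mu f\|\) obtained through the smooth approximating sequence of \eqref{eq:intro_tot_var_tangential}. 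The only (equally valid) cosmetic difference is in verifying \(L_f(\mathbf b)(\R^d)=-\int_{\R^d}f\,{\rm div}(\mathbf b)\,\d\mu\): you test against tent functions with \({\rm Lip}(\eta_n)\leq 1/n\), so the cross term dies by the uniform estimate \(|\mathbf b(\eta_n)|\leq\|\mathbf b\|_b/n\), whereas the paper uses cutoffs \(h_n\equiv 1\) on \(B_n(0)\) and dominated convergence with \(\lip_a(h_n)\to 0\) pointwise.
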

\begin{proof}\ \\
{\color{blue}\textsc{Step 1.}}
First of all, the fact that
\({\rm BV}_{\rm Der}(\R^d,\mu)={\rm BV}_{\rm Lip}(\R^d,\mu)={\rm BV}_{C^\infty}(\R^d,\mu)\)
and \(|D_\mu f|_{\rm Der}=|D_\mu f|_{\rm Lip}=|D_\mu f|_{C^\infty}\) for every
\(f\in{\rm BV}_{\rm Der}(\R^d,\mu)\) follows from \cite[Theorem 4.5.3]{DMPhD} and 
Theorem \ref{thm:Lip_Cinfty}.
Moreover, it follows from Theorem \ref{thm:derivations_vs_vectorfields} that 
\(|D_\mu f|(\R^d)=|D_\mu f|_{\rm Der}(\R^d)\) for every 
\(f\in {\rm BV}_{\rm Der}(\R^d,\mu)\), which implies 
\({\rm BV}_{\rm Der}(\R^d,\mu)\subseteq {\rm BV}(\R^d,\mu)\).

For the purposes of the next step, we recall that
it was proved in \cite[Theorem 5.1]{BBF} that a given
function \(f\in L^1_\mu(\R^d)\) belongs to \({\rm BV}(\R^d,\mu)\)
if and only if
\begin{equation}\label{eq:yet_another_BV}
\exists(f_n)_{n\in\N}\subseteq C^\infty_c(\R^d):\quad
f_n\to f\,\text{ in }L^1_\mu(\R^d),\quad\sup_{n\in\N}\int_{\R^d}
|\nabla_\mu f_n|\,\d\mu<+\infty.
\end{equation}

{\color{blue}\textsc{Step 2.}} Next we claim that
\begin{equation}\label{eq:equiv_BV_aux2}
{\rm BV}(\R^d,\mu)\subseteq{\rm BV}_{\rm Der}(\R^d,\mu).
\end{equation}
In order to prove it, fix any \(f\in{\rm BV}(\R^d,\mu)\).
Define \(\mathcal L_f\colon{\rm Der}_b(\R^d,\mu)\to 
\big(\LIP_c(\R^d),\|\cdot\|_{C_b(\R^d)}\big)^*\) as
\begin{equation}\label{eq:def_mathcalL_f(b)}
\mathcal L_f(\mathbf b)(h)\coloneqq-\int_{\R^d} f\,{\rm div}(h\mathbf b)
\,{\rm d}\mu,\quad \text{ for every }\mathbf b\in{\rm Der}_b(\R^d,\mu)
\text{ and }h\in \LIP_c(\R^d).
\end{equation}
To see that \(\mathcal L_f(\mathbf b)\) is indeed an element of 
\(\big(\LIP_c(\R^d),\|\cdot\|_{C_b(\R^d)}\big)^*\), note that
for any \(\mathbf b\in {\rm Der}_b(\R^d,\mu)\) and
\(h\in\LIP_c(\R^d)\) with \(h\neq 0\) we may compute
\[
\mathcal L_f(\mathbf b)(h)=
-\int_{\R^d}f\,{\rm div}\left(\frac{h\|h\|_{C_b(\R^d)}}
{\|h\|_{C_b(\R^d)}}\mathbf b\right)\,{\rm d}\mu=
-\|h\|_{C_b(\R^d)}\int_{\R^d}f\,{\rm div}\left(\frac{h}
{\|h\|_{C_b(\R^d)}}\mathbf b\right)\,{\rm d}\mu.
\]
We now apply Theorem \ref{thm:derivations_vs_vectorfields}: since
\(v\coloneqq\frac{h}{\|h\|_{C_b(\R^d)}}\Phi^{-1}(\mathbf b)\)
is a competitor in the definition of \(\|D_\mu f\|\) and
\({\rm div}_\mu(v)={\rm div}\big(\frac{h}{\|h\|_{C_b(\R^d)}}\boldsymbol b\big)\),
we get \(|\mathcal L_f(\mathbf b)|\leq \|h\|_{C_b(\R^d)}\|D_\mu f\|\),
thus the continuity of 
\(\mathcal L_f(\mathbf b)\). The linearity is clear from the very
definition of \(\mathcal L_f(\mathbf b)\), thus we conclude that
\(\mathcal L_f(\mathbf b)\) is an element of  
\(\big(\LIP_c(\R^d),\|\cdot\|_{C_b(\R^d)}\big)^*\). Being
\(\LIP_c(\R^d)\) dense in \(C_0(\R^d)\) with respect to the
\(C_b(\R^d)\)-norm, we can uniquely extend \(\mathcal L_f(\mathbf b)\)
to an element of \(\big(C_0(\R^d),\|\cdot\|_{C_b(\R^d)}\big)^*\),
which we still call \(\mathcal L_f(\mathbf b)\). Given that
\(\big(C_0(\R^d),\|\cdot\|_{C_b(\R^d)}\big)^*\) can be identified
with \(\mathscr M(\R^d)\) (recall Remark \ref{rmk:Measures}), there exists 
a unique \(L_f(\mathbf b)\in\mathscr M(\R^d)\) such that
\begin{equation}\label{eq:def_L_f(b)}
{\mathcal L}_f(\mathbf b)(h)=\int_{\R^d} h\,{\rm d}L_f(\mathbf b),
\quad \text{ for every }h\in C_0(\R^d).
\end{equation}
To verify that \(L_f(v)(\R^d)=-\int_{\R^d} f\,{\rm div}_\mu(v)\,{\rm d}\mu,\) 
pick a sequence \((h_n)_{n\in\N}\subseteq \LIP_c(\R^d)\) of
\(1\)-Lipschitz functions \(h_n\colon\R^d\to[0,1]\) such that
\(h_n=1\) on \(B_n(0)\) for every \(n\in \N\). In particular,
\(h_n(x)\to 1\) and \(\lip_a(h_n)(x)\to 0\) for every \(x\in\R^d\).
Since for any \(n\in\N\) we have the \(\mu\)-a.e.\ inequality
\[
\big|f\big(h_n{\rm div}(\mathbf b)+\mathbf b(h_n)\big)\big|
\leq|f|\big(|{\rm div}(\mathbf b)|+|\mathbf b|\,\lip_a(h_n)\big)
\leq|f|\big(|{\rm div}(\mathbf b)|+|\mathbf b|\big)\in L^1_\mu(\R^d),
\]
by using twice the dominated convergence theorem we deduce that
\[\begin{split}
\int_{\R^d}\d L_f(\mathbf b)
&\overset{\phantom{\eqref{eq:Leibniz_div}}}=
\lim_{n\to\infty}\int_{\R^d}h_n\,\d L_f(\mathbf b)
\overset{\eqref{eq:def_L_f(b)}}=
\lim_{n\to\infty}\mathcal L_f(\mathbf b)(h_n)
\overset{\eqref{eq:def_mathcalL_f(b)}}=
-\lim_{n\to\infty}\int_{\R^d}f\,{\rm div}(h_n\mathbf b)\,\d\mu\\
&\overset{\eqref{eq:Leibniz_div}}=
-\lim_{n\to\infty}\int_{\R^d}f\big(h_n{\rm div}(\mathbf b)
+\mathbf b(h_n)\big)\,\d\mu=-\int_{\R^d}f\,{\rm div}(\mathbf b)\,\d\mu.
\end{split}\]
The linearity of the map 
\({\rm Der}_b(\R^d,\mu)\ni \mathbf b\mapsto L_f(\mathbf b)\in
\mathscr M(\R^d)\) is clear from the very definition.
Moreover, given any \(g,h\in\LIP_c(\R^d)\), we can compute
\[
\int_{\R^d}h\,\d L_f(g\mathbf b)=\mathcal L_f(g\mathbf b)(h)=
-\int_{\R^d}f\,{\rm div}(hg\mathbf b)\,\d\mu=
\mathcal L_f(\mathbf b)(hg)=\int_{\R^d}hg\,\d L_f(\mathbf b),
\]
which, thanks to the arbitrariness of \(h\in\LIP_c(\R^d)\),
implies \(L_f(g\mathbf b)=g L_f(\mathbf b)\) for all
\(g\in\LIP_c(\R^d)\). Hence, we have proved that the operator
\(L_f\) is \(\LIP_c(\R^d)\)-linear.
We are just left to prove the continuity of \(L_f\) with respect
to the \(\|\cdot\|_b\)-norm on \({\rm Der}_b(\R^d,\mu)\).
By applying \eqref{eq:yet_another_BV}, we can find a sequence
\((f_n)_{n\in\N}\subseteq C^\infty_c(\R^d)\) and \(C\geq 0\)
such that \(f_n\to f\) in \(L^1_\mu(\R^d)\) and
\(\int_{\R^d}|\nabla_\mu f_n|\,\d\mu\to C\). 
For any \(n\in\N\) and \(h\in\LIP_c(\R^d)\), we can estimate
\[\begin{split}
\left|\int f_n \,{\rm div}(h\mathbf b)\,{\rm d}\mu\right|
&\overset{\eqref{eq:derivations_vs_vectorfields}}=\left|
\int f_n\,{\rm div}_\mu\big(h\,\Phi^{-1}(\mathbf b)\big)\,\d\mu\right|
=\left|\int_{\R^d}h\,\nabla_\mu f_n\cdot\Phi^{-1}(\mathbf b)
\,\d\mu\right|\\
&\overset{\phantom{\eqref{eq:derivations_vs_vectorfields}}}\leq
\|h\|_{C_b(\R^d)}\|\Phi^{-1}(\mathbf b)\|_{L^\infty_\mu(\R^d;\R^d)}
\int_{\R^d}|\nabla_\mu f_n|\,\d\mu\\
&\overset{\eqref{eq:derivations_vs_vectorfields}}=
\|h\|_{C_b(\R^d)}\|\mathbf b\|_b\int_{\R^d}|\nabla_\mu f_n|\,\d\mu,
\end{split}\]
whence by letting \(n\to\infty\) it follows
\(|\mathcal L_f(\mathbf b)(h)|=\lim_n
\big|\int f_n \,{\rm div}(h\mathbf b)\,{\rm d}\mu\big|
\leq\|h\|_{C_b(\R^d)}\|\mathbf b\|_b C\), thus
\[
\|L_f(\mathbf b)\|_{\sf TV}=
\sup_{\substack{h\in \LIP_c(\R^d):\\  \|h\|_{C_b(\R^d)}\leq 1}}
|{\mathcal L}_f(\mathbf b)(h)|\leq C\|\mathbf b\|_b.
\]
This yields continuity of the map \({\rm Der}_b(\R^d,\mu)
\ni\mathbf b\mapsto L_f(\mathbf b)\in\mathscr M(\R^d)\).
All in all, we have shown that \(f\in{\rm BV}_{\rm Der}(\R^d,\mu)\)
with \(Df=L_f\), thus accordingly the claim \eqref{eq:equiv_BV_aux2}
is proved.

{\color{blue}\textsc{Step 3.}} 
So far, we have shown that
\({\rm BV}(\R^d,\mu)=
{\rm BV}_{\rm Der}(\R^d,\mu)={\rm BV}_{\rm Lip}(\R^d,\mu)
={\rm BV}_{C^\infty}(\R^d,\mu)\). 
To conclude, fix a function
\(f\in{\rm BV}(\R^d,\mu)\) and an open set \(\Omega\subseteq\R^d\).
The fact that 
\(|D_\mu f|_{C^\infty}(\Omega)
=|D_\mu f|_{\rm Lip}(\Omega)
=|D_\mu f|_{\rm Der}(\Omega)\)
is granted by Theorem \ref{thm:Lip_Cinfty} and \cite[Theorem 4.5.3]{DMPhD}.
Moreover, it readily follows from Theorem
\ref{thm:derivations_vs_vectorfields} that
\(|D_\mu f|(\Omega)=|D_\mu f|_{\rm Der}(\Omega)\) as well.
This is sufficient to conclude that
\(|D_\mu f|=|D_\mu f|_{\rm Der}=|D_\mu f|_{\rm Lip}=|D_\mu f|_{C^\infty}\) as measures,
thus completing the proof of the statement.
\end{proof}
\subsection{Relation between \texorpdfstring{\(W^{1,1}\)}{W11} spaces}
Aim of this brief section is to investigate the relation between
the Sobolev spaces \(W^{1,1}(\R^d,\mu)\) and \(W^{1,1}_{\rm Lip}(\R^d,\mu)\). 
\begin{theorem}[Relation between \(W^{1,1}\) spaces]
\label{thm:relation_W11}
It holds that
\[
W^{1,1}_{\rm Lip}(\R^d,\mu)\subseteq W^{1,1}(\R^d,\mu).
\]
Moreover, it holds that \(|\nabla_\mu f|\leq|\nabla f|_{rs}\)
\(\mu\)-a.e.\ for every \(f\in W^{1,1}_{\rm Lip}(\R^d,\mu)\).
\end{theorem}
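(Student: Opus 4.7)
The plan is to exploit the good approximating sequence produced by Remark~\ref{rmk:approx_lip_a_strong} together with the closure-under-weak-limits property of the extended tangential gradient. So, given \(f\in W^{1,1}_{\rm Lip}(\R^d,\mu)\), I would start by picking \((f_n)_n\subseteq\LIP_c(\R^d)\) and \(H\in L^1_\mu(\R^d)\) with \(f_n\to f\) strongly in \(L^1_\mu(\R^d)\), \(\lip_a(f_n)\rightharpoonup|\nabla f|_{rs}\) weakly in \(L^1_\mu(\R^d)\), and \(\lip_a(f_n)\leq H\) \(\mu\)-a.e.\ for every \(n\). By Proposition~\ref{prop:Lipc_W1,1}, each \(f_n\) lies in \(W^{1,1}(\R^d,\mu)\) with \(|\nabla_\mu f_n|\leq\lip_a(f_n)\leq H\) \(\mu\)-a.e., so the sequence \((\nabla_\mu f_n)_n\subseteq L^1_\mu(\R^d;\R^d)\) is dominated.

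Next I would invoke Proposition~\ref{prop:FA_facts}~iii) to extract a (not relabeled) subsequence along which \(\nabla_\mu f_n\rightharpoonup v\) weakly in \(L^1_\mu(\R^d;\R^d)\); since \(\Gamma^1_\mu(T_\mu)\) is a closed, hence weakly closed, linear subspace of \(L^1_\mu(\R^d;\R^d)\), the limit \(v\) automatically belongs to \(\Gamma^1_\mu(T_\mu)\). The closure property built into Proposition~\ref{prop:ext_tg_grad} then immediately produces \(f\in W^{1,1}(\R^d,\mu)\) with \(\nabla_\mu f=v\), establishing the inclusion \(W^{1,1}_{\rm Lip}(\R^d,\mu)\subseteq W^{1,1}(\R^d,\mu)\).

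For the pointwise comparison \(|\nabla_\mu f|\leq|\nabla f|_{rs}\), I would take a further subsequence along which \(|\nabla_\mu f_n|\rightharpoonup G\) weakly in \(L^1_\mu(\R^d)\) (again by Proposition~\ref{prop:FA_facts}~iii), using the same dominating function \(H\)); testing the \(\mu\)-a.e.\ inequality \(|\nabla_\mu f_n|\leq\lip_a(f_n)\) against arbitrary non-negative \(h\in L^\infty_\mu(\R^d)\) and passing to the weak limit yields \(G\leq|\nabla f|_{rs}\) \(\mu\)-a.e. Then, fixing a countable dense subset \((w_i)_i\) of the closed unit ball of \(\R^d\), for each \(i\) and each \(0\leq h\in L^\infty_\mu(\R^d)\) the chain
\[
\int h\,w_i\cdot\nabla_\mu f\,\d\mu=\lim_{n\to\infty}\int h\,w_i\cdot\nabla_\mu f_n\,\d\mu\leq\lim_{n\to\infty}\int h\,|\nabla_\mu f_n|\,\d\mu=\int h\,G\,\d\mu
\]
forces \(w_i\cdot\nabla_\mu f\leq G\) \(\mu\)-a.e., and taking the supremum over \(i\) gives \(|\nabla_\mu f|\leq G\leq|\nabla f|_{rs}\) \(\mu\)-a.e.

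The main obstacle is precisely this last identification step: the weak \(L^1_\mu\)-limits of \(\nabla_\mu f_n\) and of \(|\nabla_\mu f_n|\) are a priori unrelated, so one must run a double subsequence extraction and then upgrade the integrated weak control into a pointwise \(\mu\)-a.e.\ bound via a countable-dense-directions argument. An alternative route through Theorem~\ref{thm:equiv_char_W11}, namely showing directly that \(|\nabla f|_{rs}\in\mathrm{TRS}(f)\), would require approximating each Lipschitz \(f_n\) by smooth functions and diagonalizing, which seems technically heavier than the closure-based approach sketched above.
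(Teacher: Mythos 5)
Your argument is correct, but it is organized quite differently from the paper's --- amusingly, the paper takes exactly the route you set aside as technically heavier. Starting from the same sequence \((f_n)_n\) provided by Remark \ref{rmk:approx_lip_a_strong}, the paper mollifies each \(f_n\) into \(f_n^k\coloneqq\rho_{1/k}*f_n\in C^\infty_c(\R^d)\), uses Lemma \ref{lem:conv_to_lip_a} to get \(|\nabla_\mu f_n^k|\leq{\rm Lip}\big(f_n;B_{2/k}(\cdot)\big)\to\lip_a(f_n)\) strongly in \(L^1_\mu(\R^d)\), extracts weak limits \(G_n\leq\lip_a(f_n)\leq H\) and then \(G\leq|\nabla f|_{rs}\), and diagonalizes to produce smooth \(g_n\to f\) with \(|\nabla_\mu g_n|\rightharpoonup G\); this exhibits \(|\nabla f|_{rs}\in{\rm TRS}(f)\), and Theorem \ref{thm:equiv_char_W11} then delivers both the membership \(f\in W^{1,1}(\R^d,\mu)\) and the bound \(|\nabla_\mu f|\leq|\nabla f|_{rs}\) in one stroke. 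You instead stay at the Lipschitz level: Proposition \ref{prop:Lipc_W1,1} puts each \(f_n\) in \(W^{1,1}(\R^d,\mu)\) with \(|\nabla_\mu f_n|\leq\lip_a(f_n)\leq H\), domination plus Proposition \ref{prop:FA_facts} iii) gives a weak limit \(v\), which lies in \(\Gamma^1_\mu(T_\mu)\) since that is a closed (hence weakly closed) subspace, and the closure property of Proposition \ref{prop:ext_tg_grad} immediately yields \(f\in W^{1,1}(\R^d,\mu)\) with \(\nabla_\mu f=v\); your double subsequence extraction and countable-dense-directions argument for the pointwise comparison are sound, and indeed are the same device the paper itself uses at the end of Theorem \ref{thm:equiv_char_W11} and in Step 1 of Theorem \ref{thm:derivations_vs_vectorfields}. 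Both routes are valid and of comparable length, but note that yours is less independent of the TRS machinery than it appears: Proposition \ref{prop:Lipc_W1,1} is itself proved by precisely the mollification-plus-\({\rm TRS}\) argument you judged heavier, so the diagonalization has simply been performed once and for all there and you are reusing it rather than avoiding it. What your version buys is modularity --- membership in \(W^{1,1}(\R^d,\mu)\) drops out instantly from the closedness of the extended gradient --- at the cost of re-deriving by hand the \(\mu\)-a.e.\ comparison that Theorem \ref{thm:equiv_char_W11} already packages.
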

\begin{proof}
To prove the statement amounts to showing that
\begin{equation}\label{eq:relation_W11_cl}
f\in W^{1,1}_{\rm Lip}(\R^d,\mu)\quad\Longrightarrow\quad
f\in W^{1,1}(\R^d,\mu)\,\text{ and }\,|\nabla_\mu f|\leq|\nabla f|_{rs}
\,\text{ in the }\mu\text{-a.e.\ sense.}
\end{equation}
Taking Remark \ref{rmk:approx_lip_a_strong} into account,
we can find a sequence \((f_n)_{n\in\N}\subseteq\LIP_c(\R^d)\)
and a non-negative function \(H\in L^1_\mu(\R^d)\) 
such that \(f_n\to f\) strongly in \(L^1_\mu(\R^d)\),
\(\lip_a(f_n)\rightharpoonup|\nabla f|_{rs}\) weakly in
\(L^1_\mu(\R^d)\), and \(\lip_a(f_n)\leq H\) \(\mu\)-a.e.\ for
every \(n\in\N\). Given any \(n\in\N\),
define \(f_n^k\coloneqq\rho_{1/k}*f_n\in C^\infty_c(\R^d)\)
for every \(k\in\N\). Lemma \ref{lemma:approx_lip_via_smooth}
says that \(f_n^k\to f_n\) strongly in \(L^1_\mu(\R^d)\) as
\(k\to\infty\). Lemma \ref{lem:conv_to_lip_a} gives
\[
|\nabla_\mu f_n^k|\leq|\nabla f_n^k|
\overset{\eqref{eq:approx_lip_via_smooth_2}}\leq
{\rm Lip}\big(f_n;B_{2/k}(\cdot)\big)\to\lip_a(f_n),
\quad\text{ strongly in }L^1_\mu(\R^d)\text{ as }k\to\infty.
\]
Then Proposition \ref{prop:FA_facts} yields the existence of a function
\(G_n\in L^1_\mu(\R^d)\) such that (up to a subsequence in \(k\))
it holds \(G_n\leq\lip_a(f_n)\leq H\) \(\mu\)-a.e.\ and
\(|\nabla_\mu f_n^k|\rightharpoonup G_n\) weakly in
\(L^1_\mu(\R^d)\) as \(k\to\infty\). By applying Proposition
\ref{prop:FA_facts} again, we can also find a function
\(G\in L^1_\mu(\R^d)\) such that \(G\leq|\nabla f|_{rs}\)
\(\mu\)-a.e.\ and (up to a subsequence in \(n\))
\(G_n\rightharpoonup G\) weakly in \(L^1_\mu(\R^d)\).
Thanks to a diagonalization argument, we can construct a
sequence \((k(n))_{n\in\N}\subseteq\N\) such that the
functions \(g_n\coloneqq f_n^{k(n)}\in C^\infty_c(\R^d)\)
satisfy \(g_n\to f\) strongly in \(L^1_\mu(\R^d)\) and
\(|\nabla_\mu g_n|\rightharpoonup G\) weakly in \(L^1_\mu(\R^d)\).
This implies that \(|\nabla f|_{rs}\in{\rm TRS}(f)\), whence
(by Theorem \ref{thm:equiv_char_W11}) it follows that
\(f\in W^{1,1}(\R^d,\mu)\) and \(|\nabla_\mu f|\leq|\nabla f|_{rs}\)
\(\mu\)-a.e., getting \eqref{eq:relation_W11_cl}. Therefore,
the statement is achieved.
\end{proof}

\bibliographystyle{siam}

\end{document}